\newtheorem{thm}{Theorem}[section]
\newtheorem{cor}[thm]{Corollary}
\newtheorem{question}[thm]{Question}
\theoremstyle{plain}
\newtheorem{theo}[thm]{Theorem}
\newtheorem{lem}[thm]{Lemma}
\theoremstyle{definition}
\newtheorem{eg}[thm]{Example}
\newtheorem{rem}[thm]{Remark}
\numberwithin{equation}{section}
\def\bu{\bullet}
\def\sq{\square}
\def\zz{\mathbb Z}
\def\nn{\mathbb N}
\def\rr{\mathbb R}
\def\ov{\overline}
\def\Ga{\Gamma}
\def\De{\Delta}
\def\ga{\gamma}
\def\ep{\ve}
\def\al{\alpha}
\def\be{\beta}
\def\ve{\varepsilon}
\def\vk{s}
\def\T{\mathbf{T}}
\def\ssu{\subset}
\def\wt{\widetilde}
\def\<{\langle}
\def\>{\rangle}
\def\Z{ {\text {\rm Z} } }
\def\Q{{\text {\rm Q} } }
\def\0{{\mathbf 0}}
\def\NN{{\mathbb N}}
\def\.{\hskip.06cm}
\def\ts{\hskip.03cm}
\def\lra{\leftrightarrow}
\def\bz{{\textbf{z}}}
\def\bx{{\textbf{x}}}
\def\by{{\textbf{y}}}
\def\poly{\textup{\textsf{P}}}
\def\SP{{\textup{\textsf{\#P}}}}
\def\SigmaP{\Sigma^{\poly}}
\def\PiP{\Pi^{\poly}}
\def\Z{\mathbb{Z}}
\def\R{\mathbb{R}}
\def\Q{\mathbb{Q}}
\def\T{\mathcal{T}}
\newcommand{\cj}[1]{\overline{#1}}
\newcommand{\n}{\cj{n}}
\renewcommand{\b}{\cj{b}}
\def\albar{\cj{\alpha}}
\def\nubar{\cj{\nu}}
\newcommand{\x}{\mathbf{x}}
\renewcommand{\t}{\mathbf{t}}
\renewcommand{\u}{\mathbf{u}}
\def\v{\mathbf{v}}
\def\w{\mathbf{w}}
\newcommand{\y}{\mathbf{y}}
\newcommand{\z}{\mathbf{z}}
\newcommand{\floor}[1]{\lfloor#1\rfloor}
\newcommand{\polyin}[1]{\textup{poly}(#1)}
\newcommand{\ex}{\exists\ts}
\renewcommand{\for}{\forall\ts}
\def\nin{\noindent}
\def\NP{{\textup{\textsf{NP}}}}
\def\PP{{\textup{\textsf{P}}}}
\def\sharpP{\textup{\textsf{\#P}}}
\def\IPP{\text{\rm (IP)}}
\def\PIPP{\text{\rm (PIP)}}
\def\GIP{\text{\rm (GIP)}}
\def\sharpGIP{\text{\rm ($\#$GIP)}}
\def\SPAo{\text{\rm (Short-PA$_1$)}}
\def\SPAd{\text{\rm (Short-PA$_2$)}}
\def\SPAt{\text{\rm (Short-PA$_3$)}}
\def\SPAm{\text{\rm (Short-PA$_m$)}}
\def\sharpSPAt{\text{\rm ($\#$Short-PA$_3$)}}
\def\ovv{\overrightarrow}
\def\seg{}
\def\AP{\textup{AP}}
\newcommand{\problem}[1]{\textsc{#1}}
\newcommand{\problemdef}[3]{
\bigskip
\begin{tabular}{p{0.1\textwidth} p{0.8\textwidth}}
\multicolumn{2}{l}{\problem{#1}}\\
\textbf{Input:} & {#2} \\
\textbf{Decide:} & {#3}
\end{tabular}
\bigskip
}
\renewcommand{\mod}[1]{
\;\, (\textup{mod} \; #1)
}
\def\lra{\leftrightarrow}
\def\KPT{\textnormal{KPT}}
\def\PIP{\textnormal{PIP}}
\def\cpl{\ts\backslash\ts}
\def\cal{\mathcal}
\title{Short Presburger arithmetic is hard$\phantom{}^\dagger$}
\author[Danny Nguyen \and Igor Pak]{Danny Nguyen$^{\star}$ \and Igor~Pak$^{\star}$}
\thanks{\thinspace${\hspace{-.45ex}}^\dagger$Extended abstract will appear in \emph{Proceedings of the 58th Annual Symposium on Foundations of Computer Science (FOCS 2017)}.
}
\thanks{\thinspace ${\hspace{-.45ex}}^\star$Department of Mathematics,
UCLA, Los Angeles, CA, 90095.
\hskip.06cm
Email:
\hskip.06cm
\texttt{\{ldnguyen,\ts{pak}\}@math.ucla.edu}}
\thanks{
\today}
\begin{document}
\maketitle


\begin{abstract}
We study the computational complexity of short sentences in Presburger arithmetic (\textsc{Short-PA}).
Here by ``short''  we mean sentences with a bounded number of variables, quantifiers,
inequalities and Boolean operations; the input consists only of the integer coefficients involved
in the linear inequalities.  We prove that satisfiability of \textsc{Short-PA} sentences
with $m+2$ alternating quantifiers is $\SigmaP_{m}$-complete or $\PiP_{m}$-complete,
when the first quantifier is $\exists$ or $\forall$, respectively.  Counting versions
and restricted systems are also analyzed.  Further application are given to
hardness of two natural problems in Integer Optimization.
\end{abstract}

\vskip1.5cm

\section{Introduction}

\subsection{Outline of the results}
We consider \emph{short Presburger sentences}, 
defined as follows:
$$
\SPAm
\qquad \ex\x_{1} \;\; \forall \ts\x_{2} \; \dots \;
\forall/\exists \ts \x_{m} \,:\, \Phi\bigl(\x_{1}, \dots, \x_{m}\bigr),
$$
where the quantifiers alternate, the variables $\x_i \in \zz^{n_i}$
have fixed dimensions $\ov n=(n_1,\ldots,n_m)$, and $\Phi(\x_1,\ldots,\x_m)$
is a fixed Boolean combination of integer linear systems of fixed lengths (numbers of inequalities):
$$(\ast) \qquad
A_1\ts \x_1 \. + \. \ldots \. + \. A_k\ts \x_m \, \le \, \ov b\ts.
$$
In other words, everything is fixed in~$\SPAm$,  except for the entries
of the matrices~$A_i$ and of the vectors~$\ov b$ in $(\ast)$.
We also call $\Phi$ a \emph{short Presburger expression}.

The feasibility of short Presburger sentences is a well known open problem
which we resolve in this paper.  Connected to both Integer Programming and
Computational Logic, it was called a ``fundamental question'' by Barvinok
in a recent survey~\cite{B4}.
Many precursors to $\SPAm$ are well known, including
\emph{Integer Linear Programming}:
$$
\IPP
\qquad
\exists \ts\bx \, : A\ts\x \le \b\ts,
$$
and \emph{Parametric Integer Programming}:
$$
\PIPP
\qquad
\forall \ts\by \in Q \;\; \exists \ts\bx \, : \, A\ts\bx \. + \. B\ts\by \, \le \, \ov b\ts,
$$
where $Q$ is a convex polyhedron given by $\ts K \ts\by \le \ov u$.
In both cases, the problems were shown to be in~$\PP$, by Lenstra in~1982
and Kannan in~1990, respectively (Theorem~\ref{th:Kannan}).
Traditionally, the lengths of the systems in both $\IPP$ and $\PIPP$ are not restricted.
However, it is known that they both can be reduced to the case of a bounded length system (c.f. Sec.~8.1~\cite{KannanNPC}).

Our main result is a complete solution of the problem.  We show that for a fixed
$m \ge 3$, deciding~$\SPAm$ is $\SigmaP_{m-2}$-complete (Theorem~\ref{th:PH}).
This disproves\footnote{Assuming the polynomial hierarchy does not collapse.}
a conjecture by Woods~\cite[$\S$5.3]{W1} (see also~\cite{W2}),
which claims that decision is in $\poly$.

Let us emphasize that until this work even the following special case remained open:
$$
\GIP
\qquad
\exists \ts\bz \in R \;\; \forall \ts\by \in Q \;\; \exists \ts\bx \, : \, A\ts\bx \. + \. B\ts\by \. + \. C\ts\bz \, \le \, \ov b\ts,
$$
where $Q$ and $R$ are convex polyhedra given by $\ts K \ts\by \le \ov u$ and $\ts L \ts\bz \le \ov v$,
respectively.  We also show that $\GIP$ is $\NP$-complete (Theorem~\ref{cor:system1}).
This resolves an open problem by Kannan~\cite{K2}.

Our reduction is parsimonious and also proves that the corresponding counting problem is
$\SP$-complete:
$$
\sharpGIP
\qquad
\# \, \big\{ \ts\bz \in R \; : \; \forall \ts\by \in Q \;\; \exists
\ts\bx \;\; A\ts\bx \. + \. B\ts\by \. + \. C\ts\bz \, \le \, \ov b\ts  \big\}.
$$

There is a natural geometric way to view these problems.  Problem~$\IPP$
asks whether a given rational polyhedron $P \ssu \rr^d$ contains an integer point.
Problem~$\PIPP$ asks whether the projection of~$P$ contains all integer points
in some polyhedron~$Q$.  Finally, problem~$\GIP$ asks whether there is an
$R$-slice of a polyhedron $P$ for which the projection contains all integer points
in some polyhedron~$Q$.

\subsection{Precise statements}
For $m=3$ alternating quantifiers, we have the first hard instance of $\SPAm$~:
$$
\SPAt
\qquad
\exists \ts\bz  \;\; \forall \ts\by \;\; \exists \ts\bx \, : \, \Phi(\x,\y,\z).
$$
Here $\Phi$ is a short Presburger expression in $\x$, $\y$ and $\z$.
We can also define the counting problem
$$
\sharpSPAt
\qquad
\# \big\{ \ts\z \; : \; \for \y \;\; \ex \x \;\; \Phi(\x,\y,\z) \ts \big\}.
$$

\smallskip

\begin{theo}\label{th:PR3hard}
Deciding $\SPAt$ is $\NP$-complete, even for a short
Presburger expression $\Phi$ of at most  $10$ inequalities in $5$ variables
$z \in \zz$, $\y \in \zz^{2}$, $\x \in \zz^{2}$.
Similarly, computing $\sharpSPAt$ in this case is $\sharpP$-complete.
\end{theo}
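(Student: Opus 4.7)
The plan splits into establishing membership in $\NP$/$\sharpP$ and proving matching hardness via a parsimonious reduction. For membership, I would first observe that after fixing $z \in \Z$, the residual sentence $\forall \y \in \Z^{2}\;\exists \x \in \Z^{2} : \Phi_z(\y,\x)$ is a \PIPP\ instance in fixed dimension, hence decidable in polynomial time by Theorem~\ref{th:Kannan}. To conclude $\NP$ membership it suffices to show that a satisfiable sentence always admits a witness $z$ of polynomial bit length. This should follow because, after applying effective quantifier elimination (polynomial-time in fixed dimension) to the inner $\forall\ts\exists$ block, the set of valid $z$'s becomes a Presburger subset of $\Z$ whose defining inequalities and congruences have polynomially bounded coefficients; any nonempty one-variable Presburger set contains a polynomially small element. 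Guessing $z$ and verifying via Kannan's algorithm places $\SPAt \in \NP$; enumerating $z$'s within the established range places $\sharpSPAt \in \sharpP$.

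For the hardness direction I plan to reduce parsimoniously from \problem{SubsetSum} (respectively, \problem{\#SubsetSum}). Given weights $a_1,\ldots,a_n$ and target $T$, I would encode a candidate subset $S \subseteq [n]$ by a single integer $z$ whose positional expansion in a sufficiently large radix $B$ simultaneously records the indicator vector of $S$ and, via a second block of digits weighted by the $a_i$'s, the partial sum $\sum_{i \in S} a_i$. The expression $\Phi(z,\y,\x)$ would then assert: (a) $z$ is a valid such encoding --- all indicator digits lie in $\{0,1\}$ and the weight digits are consistent with the indicator digits; and (b) the weight block of $z$ equals $T$, which is a single linear equation in $z$. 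Condition (a) is the crux and will exploit the alternation $\forall \y\;\exists \x$: one coordinate of $\y$ is intended to parametrize digit positions while the other guards the out-of-range case so the implication becomes vacuous outside the intended range, and the existential pair $\x = (q,r) \in \Z^{2}$ is meant to supply division witnesses of $z$ with respect to the appropriate power of $B$.

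The main technical obstacle will be that the exponent $B^{y_1}$ is nonlinear in $y_1$ and cannot appear directly in a linear inequality. My planned workaround is to let one coordinate of $\x$ itself carry the relevant power as an auxiliary witness, so that being ``a power of $B$'' reduces to a divisibility chain with small quotients, enforced by a constant-size Boolean combination of linear inequalities. The tightest step will then be packing the entire construction into the stated budget of $10$ inequalities in $5$ integer variables; this forces the same pair $(\y, \x)$ to be reused across both the digit-validation and sum-verification branches, and rules out any slack in the Boolean structure. I expect this careful accounting to be the most delicate part of the argument. Finally, since the encoding $S \mapsto z$ is injective, the reduction is parsimonious --- the number of valid $z$'s equals the number of solutions of the \problem{SubsetSum} instance --- yielding $\sharpP$-hardness of $\sharpSPAt$ simultaneously with $\NP$-hardness of $\SPAt$.
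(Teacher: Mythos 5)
Your membership argument is essentially repairable but imprecise: after fixing $z$, the inner sentence $\for \y \; \ex \x\,:\,\Phi_z(\y,\x)$ is \emph{not} a $\PIPP$ instance, since $\Phi$ is a Boolean combination of inequalities rather than a single system, so Kannan's theorem does not apply directly; the correct tool is Woods' result for short Presburger expressions with a bounded number of inequalities (Theorem~\ref{cor:PR2}), and the polynomial bound on the witness $z$ is exactly what Gr\"adel's bound supplies, rather than the sketched polynomial-time quantifier elimination.

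The genuine gap is in your hardness reduction. Your encoding of \textsc{SubsetSum} requires, inside a \emph{fixed} formula with $5$ variables and $10$ inequalities, extracting the $y_1$-th base-$B$ digit of $z$, i.e.\ access to the pair $(y_1, B^{y_1})$. The graph of $y_1 \mapsto B^{y_1}$ is not semilinear, hence not definable by \emph{any} Presburger formula, no matter how many quantifier alternations are used; likewise the divisibility relation between two variables is not Presburger-definable, and an unbounded ``divisibility chain'' certifying that a witness coordinate of $\x$ is a power of $B$ cannot be enforced by a constant-size Boolean combination of linear inequalities with a single existential pair $\x \in \zz^2$ — each link of the chain would need its own witness, and the number of links grows with the instance. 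So the step you yourself flag as the ``main technical obstacle'' is not a packaging issue but a definability obstruction, and the reduction collapses there. The paper circumvents precisely this: it reduces from \textsc{AP-COVER} (Stockmeyer--Meyer) instead of a digit-based problem, and encodes the arithmetic progressions modulo $M$ in the second coordinates of the convergents of a single continued fraction $p/q$. The only nontrivial condition then needed is ``$\y$ lies on the convergent chain $\mathcal{C'}$,'' which by the lattice geometry of convergents is equivalent to the parallelogram $P_{\y}$ being integer-point free — a genuinely fixed-size $\ex \x$ condition — together with a congruence $z \equiv y_2 \pmod M$, which costs one more existential variable absorbed into $\x$. All instance data end up in the coefficients $p,q,M,g_1,\mu,\nu$, never in the combinatorial structure of $\Phi$, which is what makes the $10$-inequality, $5$-variable budget attainable; parsimony then follows as in your plan, but from the uniqueness in the Chinese Remainder Theorem step of the $\textsc{3SAT} \to \textsc{AP-COVER}$ reduction.
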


For systems of inequalities, we also get:

\begin{theo}\label{cor:system1}
Deciding $\GIP$ is $\NP$-complete, even for a system $A\ts\x + B\ts\y + Cz  \le \ov b\ts$
of at most $24$ inequalities in $9$ variables  $z \in \zz$,
$\y \in \zz^{2}$, $\x \in \zz^{6}$, when $R$ is an interval and $Q$ is a triangle.
Similarly, computing $\sharpGIP$ in this case is $\sharpP$-complete.
\end{theo}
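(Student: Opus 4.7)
The plan is to reduce from Theorem~\ref{th:PR3hard}. Starting with the hard instance $\exists z\,\forall \y\,\exists \x:\Phi(\x,\y,z)$ of \SPAt, where $\Phi$ is a Boolean combination of at most $10$ linear inequalities in $(z,\y,\x)\in\zz\times\zz^2\times\zz^2$, I want to replace the Boolean combination by a single conjunctive system $A\x+B\y+Cz\le\ov b$, while forcing the outer quantifiers to range over an interval $R\ni z$ and a triangle $Q\ni\y$. Parsimony of this reduction will automatically give the $\sharpP$-completeness for $\sharpGIP$ from $\sharpSPAt$, since the transformation preserves the set of valid $z$'s.

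First, I would rewrite $\Phi$ in disjunctive normal form and push all disjunctions into the innermost existential scope. Concretely, write $\Phi(\x,\y,z) \, \Leftrightarrow \, \bigvee_{i=1}^{r} \Phi_i(\x,\y,z)$, where each clause $\Phi_i$ is a conjunction of (negated) linear inequalities; negations are eliminated using the integrality trick ($L\ge 1$ in place of $\neg(L\le 0)$). Each disjunction at the innermost $\exists$-level is then absorbed by standard big-$M$ encoding: introduce a $0/1$ selector variable $w_i$ and rewrite $\Phi_i \vee \Phi_{i+1}$ as $\Phi_i + M w_i \le 0 \,\wedge\, \Phi_{i+1} + M(1-w_i)\le 0$, where $M$ is a sufficiently large integer obtained from the coefficient magnitudes. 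The selectors $w_i$ are adjoined to $\x$; each disjunction costs one auxiliary integer variable plus two bounding inequalities $0\le w_i\le 1$. Counting the $10$ inequalities in $\Phi$, the bounds on the $w_i$'s, and the big-$M$ augmentations, one can engineer the final system to contain at most $24$ inequalities with $\x\in\zz^{6}$. To keep parsimony I would ensure the encoded clauses are mutually exclusive (e.g.\ by ordering the $w_i$'s lexicographically via additional implications) so that each solution $\x$ of the original existential lifts to exactly one extended tuple $(\x,w_1,\ldots,w_4)$.

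Next, I would isolate the constraints on $z$ and $\y$ from the rest. The hard instance of \SPAt produced in the proof of Theorem~\ref{th:PR3hard} can be arranged so that $z$ is only required to lie in an explicit interval $R=[0,N]$ (this is essentially the counting parameter in the $\NP$-hardness reduction), and so that $\y$ is only required to lie inside a triangle $Q$ given by three inequalities (if the original bounding box is rectangular, $Q$ can be taken to be the rectangle cut by a single added inequality, which still lies within the $3$-inequality budget for a triangle). All other inequalities involving $z$ or $\y$ are moved into the innermost $\exists\x$ block as part of $A\x+B\y+Cz\le\ov b$. Carefully, the $2$ inequalities defining $R$ and the $3$ defining $Q$ are separate from the $\le 19$ inequalities remaining in the inner system, matching the budget of $24$.

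The main obstacle is the triangle requirement on $Q$: the proof of Theorem~\ref{th:PR3hard} gives an inner quantifier $\forall \y$ with $\y\in\zz^2$ ranging over an arbitrary short-PA-defined domain, and forcing this domain to be a triangle while preserving the hardness reduction is the delicate step. I expect this to require revisiting the particular reduction behind Theorem~\ref{th:PR3hard} and choosing the $\forall$-polytope for $\y$ to be triangular from the outset — for instance, by encoding the universal variable inside a simplex via a barycentric parametrization and folding the extra constraint into the inner $A\x+B\y+Cz\le\ov b$ system. Once this is done, membership in $\NP$ (respectively $\sharpP$) for $\GIP$ (respectively $\sharpGIP$) follows from Theorem~\ref{th:Kannan} applied to fixed $z$ (and $\y$), because checking the inner $\PIPP$-type statement is in $\poly$ for each candidate $z$; combined with the above reduction, this yields the claimed completeness.
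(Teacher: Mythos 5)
Your overall route is the same as the paper's: take the hard $\SPAt$ instance~\eqref{eq:final} from Theorem~\ref{th:PR3hard}, let $R$ be the interval for $z$ and $Q$ a triangle for $\y$, and eliminate the inner disjunction by adjoining auxiliary integer variables to $\x$. Where you differ is the disjunction gadget: you use the standard big-$M$/selector encoding, whereas the paper first applies the distributive law and then replaces each two-way disjunction $(t_1,t_2)\in P_1\cup P_2$ by $\ex t_3\,(t_1,t_2,t_3)\in\conv(P_1\times\{0\},\,P_2\times\{1\})$, costing one new variable and $6$ facets per disjunction (hence $\t\in\zz^4$ and $24$ inequalities). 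Your gadget is equally valid here and would in fact use fewer inequalities, \emph{provided} you fix $M$ using the explicit bounds on all variables (the analogue of~\eqref{eq:bound}, justified by the observation that a witness $\x$, if one exists, can be chosen in a bounded box computable from the input) rather than from ``coefficient magnitudes'' alone; without such a box the relaxed constraints of a deactivated clause need not be satisfiable. Also, your effort to make the clauses mutually exclusive for parsimony is unnecessary: $\sharpGIP$ counts the outer witnesses $z$, not the inner tuples $\x$, so the reduction is parsimonious in $z$ no matter how many $\x$ survive.

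The genuine gap is the step you yourself flag as ``the main obstacle'': forcing the universal domain to be a triangle. You leave this unresolved and propose a barycentric reparametrization, which is not needed and, as sketched, does not obviously preserve the form $A\ts\x+B\ts\y+Cz\le\ov b$. In the construction behind Theorem~\ref{th:PR3hard} the relevant domain is \emph{already} a triangle: the two disjuncts $\v\cdot\y<0$ and $y_2<g_1$ in~\eqref{eq:final} are exactly the complements of two facet inequalities of
$Q=\{\y\in\rr^2: y_2\ge g_1,\ y_1\le q,\ \v\cdot\y\ge0\}$
from~\eqref{eq:Q_def}, and the third facet $y_1\le q$ is harmless because every point where the matrix clause of~\eqref{eq:final} can fail lies on $\mathcal{C'}\subseteq Q$ (Lemma~\ref{lem:easy}). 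Hence $\forall\ts\y$ may be replaced by $\forall\ts\y\in Q$ with $Q$ a triangle, the two escape disjuncts disappear, and only the two-clause disjunction~\eqref{eq:disjunction} remains to be encoded. To complete your proof you must supply exactly this argument (or an equivalent one); without it the claim that the universal quantifier ranges over a triangle is unsupported, and the reduction to $\GIP$ is not finished.
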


The third dimension $\x \in \zz^{6}$ in the theorem can be lowered to $\x \in \zz^{3}$
at the cost of increasing the length of the linear system:

\begin{theo}\label{cor:system2}
Deciding $\GIP$ is $\NP$-complete, even for a system
$A\ts\x + B\ts\y + Cz  \le \ov b\ts$ of at most $8400$
inequalities in $6$ variables  $z \in \zz$, $\y \in \zz^{2}$, $\x \in \zz^{3}$,
when $R$ is an interval and $Q$ is a triangle.
Similarly, computing $\sharpGIP$ in this case is $\sharpP$-complete.
\end{theo}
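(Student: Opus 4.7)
The plan is to refine the construction in the proof of Theorem~\ref{cor:system1} by further compressing the innermost existential block, trading dimensions of $\x$ for length of the linear system. The six variables of $\x \in \zz^6$ in Theorem~\ref{cor:system1} split into two ``data'' variables (inherited from the $\x \in \zz^2$ of Theorem~\ref{th:PR3hard}) together with four auxiliary binary indicator variables used to encode the DNF structure of the Boolean expression~$\Phi$ as a single system of inequalities via a big-$M$ formulation. To bring the dimension of~$\x$ down to~$3$, I would fold these four binary indicators into a single integer ``selector'' variable~$t$ ranging over a bounded interval.

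More concretely, I would first make explicit the DNF expansion $\Phi(\x,\y,z) \same \bigvee_{i=1}^{c} C_i(\x,\y,z)$ of the Boolean expression from Theorem~\ref{th:PR3hard}, where each $C_i$ is a conjunction of at most $10$ linear inequalities and $c$ is the number of disjuncts. Next I would introduce a single integer selector $t$ with $0 \le t \le c-1$ in~$\x$. Then, for every clause index $i$ and every inequality $L \in C_i$, I would adjoin a block of big-$M$ activation inequalities that force $L$ to hold exactly when $t = i$, and remain vacuous whenever $t \ne i$. The witness map on~$z$ is unchanged from the construction of Theorem~\ref{cor:system1}, so the reduction stays parsimonious and the $\sharpP$-completeness of $\sharpGIP$ in this regime follows at once from the parsimonious $\sharpP$-completeness already established in Theorem~\ref{th:PR3hard}.

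The main obstacle is precisely this purely linear encoding of a multi-valued integer selector~$t$. In contrast with the binary-indicator case of Theorem~\ref{cor:system1}, a single one-sided big-$M$ inequality cannot simultaneously stay vacuous for both $t < i$ and $t > i$: each activation of a clause has to be split into two inequalities handling the two directions of deactivation, supplemented by auxiliary sign-control inequalities that ensure no spurious constraints are introduced for $t \ne i$. It is this two-sided activation scheme that drives the blow-up from $24$ inequalities in Theorem~\ref{cor:system1} to the $8400$ claimed here. The precise constant $8400$ comes from a careful accounting of the DNF size $c$ for the particular Boolean expression underlying the reduction of Theorem~\ref{th:PR3hard}, together with the activation cost per inequality, which I would need to track explicitly throughout the construction.
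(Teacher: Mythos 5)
There is a genuine gap, and it sits exactly where you flag the ``main obstacle.'' A conjunction of linear inequalities in $(t,\x,\y,z)$ defines a convex set, so once the selector $t$ ranges over three or more values there is no way to make clause $C_i$ active precisely at $t=i$ and vacuous elsewhere: if $(i-1,\w')$ and $(i+1,\w'')$ are both feasible (with $C_{i-1}$, $C_{i+1}$ satisfied there), then the midpoint $(i,(\w'+\w'')/2)$ must also be feasible whenever it is integral, even though $(\w'+\w'')/2$ need not satisfy $C_i$ --- or any clause at all, which breaks the equivalence with the disjunction. No ``two-sided activation'' or ``sign-control'' inequalities in the same variables can repair this, and adding auxiliary variables would push the dimension of $\x$ back above $3$. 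This is precisely why the paper only ever merges \emph{two} polytopes at a time via the lifting trick of~\eqref{eq:union}: with consecutive integer levels $0$ and $1$ there is no intermediate integer slice to contaminate.

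The premise and the constant are also off. After restricting $\y$ to the triangle $Q$, the expression inside $\ex\x$ is already a disjunction of just \emph{two} conjunctions, namely~\eqref{eq:disjunction}; no DNF expansion with large $c$ arises, so no multi-valued selector is needed. The paper's actual route is: realize the two clauses as polytopes $P_1,P_2\subset\rr^5$ in the variables $(z,\y,\x)$, described by their \emph{vertices} ($24$ and $16$, hence $40$ in total), lift once to $P=\conv(P_1\times\{0\},\ts P_2\times\{1\})\subset\rr^6$ using a single new variable $t$ (so $\x\in\zz^3$), and then bound the number of facets of $P$ by McMullen's Upper Bound Theorem: $f(6,40)=\binom{37}{34}+\binom{36}{34}=8400$. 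So $8400$ is a vertex-to-facet bound, not an accounting of DNF size times activation cost. Your reading of Theorem~\ref{cor:system1} is likewise inaccurate: the four extra variables there are four separate lifting coordinates for the four two-term disjunctions produced by distributing~\eqref{eq:disjunction}, not binary big-$M$ indicators. The parsimony claim for the counting part is fine once the decision construction is corrected.
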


This substantially strengthens our earlier result~\cite{KannanNPC}, which considers $\GIP$ with a ``long system'', i.e., a system arbitrarily many inequalities:

\begin{theo}[\cite{KannanNPC}]\label{th:NP-Kannan}
Deciding $\GIP$ is $\NP$-complete, for a system $A\ts\x + B\ts\y + Cz  \le \ov b\ts$
of unbounded length in $6$ variables $z \in \zz$, $\y \in \zz^{2}$, $\x \in \zz^{3}$.
\end{theo}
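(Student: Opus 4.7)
The plan is to establish (i) $\GIP \in \NP$ via guess-and-verify, and (ii) $\NP$-hardness by a reduction from \textsc{Subset Sum} that exploits the freedom of using an unbounded number of inequalities. For (i), the structure $\exists z \, \forall \y \, \exists \x$ with one-dimensional $z$ admits a nondeterministic algorithm that guesses $z \in R \cap \zz$ of polynomial bit-length and then verifies the residual assertion
\[
\forall \y \in Q \;\exists \x \in \zz^3 \, : \, A\x + B\y \,\le\, \ov b - Cz.
\]
This inner statement is a parametric integer program in fixed total dimension $2+3=5$, decidable in polynomial time by Kannan's theorem (Theorem~\ref{th:Kannan}); together with the polynomial-size description of parametric IP solution sets in fixed dimension, this shows that if some $z \in R$ witnesses the GIP statement then some polynomially bounded $z$ does.

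For (ii), given a \textsc{Subset Sum} instance $(a_1, \dots, a_n; T)$, I would encode a candidate subset $\bal = (\al_1, \dots, \al_n) \in \{0,1\}^n$ into the single integer $z$ via a carefully chosen multi-base representation -- using separated bases to isolate a ``subset register'' from a ``sum register'' inside $z$ -- so that the Subset-Sum target condition $\sum_i \al_i a_i = T$ becomes a single linear equation on $z$, while the binary-subset condition $\bal \in \{0,1\}^n$ becomes: ``for every position $i \in [n]$, the $i$-th base-$N$ digit of $z$ lies in $\{0, 1\}$.'' The linear target-sum condition is placed directly into the system $(\ast)$; the digit condition absorbs the $\forall \y \, \exists \x$ block.

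The main obstacle is encoding the digit condition within the rigid template $A\x + B\y + Cz \le \ov b$, which forbids products of variables. The natural identity relating the $i$-th digit of $z$ to $z$ involves the exponential constants $N^i$, which depend on $i$, multiplying unknown quotients -- so we cannot simply let $\y$ carry the position $i$ and hope for a single uniform linear inequality in $(\x, \y, z)$. The remedy, enabled by the unbounded number of inequalities allowed here, is to spell out $n$ separate linear constraints, one per digit position, and to ``gate'' them via a carefully sculpted universal polyhedron $Q \subseteq \R^2$: its $\Theta(n)$ facets confine the integer points $\y \in Q \cap \zz^2$ to exactly $n$ selector values, each activating one digit-verification constraint through the $3$-dimensional existential $\x$. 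Constructing such a $Q$ -- a thin polygonal region whose lattice points index the $n$ positions without admitting spurious witnesses -- is the geometric core of the reduction. Finally, since distinct subsets yield distinct $z$'s, the reduction is parsimonious and transfers the $\sharpP$-completeness of $\#$\textsc{Subset Sum} to $\sharpGIP$.
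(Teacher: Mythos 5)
First, a point of reference: the paper does not prove this statement; it is imported verbatim from the earlier paper \cite{KannanNPC}, where the hardness reduction is from a covering/congruence problem (in the spirit of the \textsc{AP-COVER} reduction used elsewhere in this paper) and the unbounded length is spent on a carefully constructed polygon $Q$. Your membership argument (guess a polynomially bounded $z$, then decide the residual $\for \y \in Q\; \ex \x$ sentence by Kannan's Theorem~\ref{th:Kannan}) is fine and standard. The hardness half, however, has a genuine gap: the two steps on which the whole reduction rests are asserted rather than carried out, and both fail as described. (1) If $z$ carries the subset bits in its base-$N$ digits, the target condition $\sum_i \al_i a_i = T$ is \emph{not} a linear equation in $z$; your fix of adding a ``sum register'' inside $z$ only relocates the difficulty, since the consistency $s_i - s_{i-1} = \al_i a_i$ between adjacent digits is again a family of $n$ position-dependent conditions that must pass through the $\for\y\,\ex\x$ block. (2) The ``gating'' is the actual theorem. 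All $n$ digit-verification rows live in one conjunctive system $A\x + B\y + Cz \le \ov b$ and share the same three existential variables $\x$. For the selector value $y_1 = i$, the witness is forced to be $\x \approx (\lfloor z/N^{i+1}\rfloor, d, v)$, and then row $j \ne i$, whose $\x$-coefficients are the constants $N^{j+1}, N^j$, evaluates to a quantity whose magnitude differs from the intended scale by a factor of roughly $N^{|i-j|}$. To neutralize it you would need a term, linear in $\y$, that is enormous for every $y_1 \ne j$ yet vanishes at $y_1 = j$; no affine function of $\y$ does this. This is precisely the obstruction that forces the genuinely geometric constructions in this line of work, and your sketch does not overcome it.

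There is also a quantitative mismatch with the stated variable budget. Extracting the $i$-th base-$N$ digit of $z$ already consumes all three coordinates of $\x$ (quotient, digit, remainder), leaving nothing with which to verify the weighted-sum relation between adjacent digits --- which would need at least five or six fresh existential variables per position. So even granting a resolution of the gating issue, the reduction as designed does not fit inside $\x \in \zz^3$, $\y \in \zz^2$. To repair the argument you would either need to reproduce the construction of \cite{KannanNPC}, or find a different source problem whose ``for all indices $i$'' structure can be realized as ``for all lattice points of a single convex polygon'' with constraints whose coefficients do not vary with $i$ --- that is where all the work is.
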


At the time of proving Theorem~\ref{th:NP-Kannan}, we thought it would be the strongest negative result (see Section~\ref{sec:KPT_intro} below).
Nevertheless, the new results in theorems~\ref{th:PR3hard},~\ref{cor:system1} and~\ref{cor:system2} say that at the level of three quantifiers, both Integer Programming
and Presburger Arithmetic quickly saturate to a high level of complexity,
even when all para\-meters are bounded.

The decision part of Theorem~\ref{th:PR3hard}
can naturally be generalized to short Presburger sentences of more than $3$ quantifiers:

\begin{theo}[Main result]\label{th:PH}
Fix $m \ge 1$.
Let $Q_{1},\dots,Q_{m+2} \in \{\for,\ex\}$ be $m+2$ alternating quantifiers with $Q_{1} = \ex$.
Deciding short Presburger sentences of the form
\begin{equation*}
Q_{1}\z_{1}  \;\; \dots \;\; Q_{m+1} \z_{m+1} \;\; Q_{m+2}\z_{m+2} \;\; : \;\; \Phi(\z_{1},\dots,\z_{m+2})
\end{equation*}
is $\SigmaP_{m}$-complete.  Similarly, when $Q_{1} = \forall$, deciding short Presburger sentences
as above is $\PiP_{m}$-complete.
Here $\Phi$ is a short Presburger expression of at most $10m$ inequalities
in $4m+1$ variables $\z_{1} \in \zz$, $\z_{2}, \z_{m+2}  \in \zz^{2}$,
and $\z_{3},\dots,\z_{m+1}\in \zz^{4}$.
\end{theo}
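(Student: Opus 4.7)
The proof splits into an upper and a lower bound. The upper bound follows from Kannan's theorem (Theorem~\ref{th:Kannan}) together with polynomial bounds on witness sizes; the lower bound is established by reducing from a canonical $\SigmaP_m$-complete problem, uniformly in $m$, with Theorem~\ref{th:PR3hard} serving as the base case $m=1$.

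\textbf{Upper bound.} Fix a short Presburger sentence $Q_1 z_1 \ldots Q_{m+2} z_{m+2} : \Phi$ in fixed dimension. Existential witnesses, when they exist, can be chosen with bit length polynomial in the input, by the standard bounds on smallest integer solutions to feasible linear systems in fixed dimension. Consequently, the outer $m$ quantifier blocks can be simulated by an alternating Turing machine with $m$ alternations. The two innermost blocks form either a $\forall z_{m+1} \exists z_{m+2}$ or an $\exists z_{m+1} \forall z_{m+2}$ short Presburger sub-sentence in fixed dimension, which is decidable in $\poly$ by Kannan's theorem, respectively its complement. This gives $\SigmaP_m$ membership when $Q_1 = \exists$ and $\PiP_m$ membership when $Q_1 = \forall$.

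\textbf{Lower bound.} We reduce directly from the canonical $\SigmaP_m$-complete problem of $\Sigma_m$-quantified Boolean satisfiability. The $m$ alternating Boolean quantifier blocks $\exists X_1 \forall X_2 \ldots Q X_m$ are matched one-to-one with the $m$ outermost Presburger quantifier blocks by encoding each Boolean vector $X_i$ as a bit-packed integer variable $z_i$. The prescribed dimensions $z_1 \in \zz$, $z_2 \in \zz^2$, $z_i \in \zz^4$ for $i=3,\ldots,m+1$ are chosen precisely so that the $\zz^4$ blocks can carry both the packed bits of $X_i$ and a few auxiliary integers used downstream for bit extraction. The two innermost Presburger blocks, the last $\zz^4$ block and the final $z_{m+2} \in \zz^2$, play the role of a verification gadget---a direct iteration of the one used in the proof of Theorem~\ref{th:PR3hard}---which checks whether the CNF matrix of the Boolean formula evaluates to true on the decoded assignments. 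The $\PiP_m$-hardness for sentences starting with $\forall$ follows by negating the constructed Presburger sentence: negation flips each inequality $\le$ into $>$ and preserves the short Presburger format, while swapping the quantifier pattern as required.

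\textbf{Main obstacle.} The chief technical difficulty is the design of a compact \emph{bit-extraction} gadget that, using only $4$ extra integer variables and at most $10$ linear inequalities per alternation, allows the innermost verifier to access arbitrary bits of the packed integers $z_i$. Short Presburger arithmetic has no native bit operations, so bit extraction must be simulated arithmetically via quotient-remainder identities $z_i = q \cdot 2^k + r$ with $0 \le r < 2^k$, using auxiliary quantified variables folded into the $\zz^4$ blocks. Ensuring that stacking $m$ such gadgets respects the global budget of $10m$ inequalities requires amortized bookkeeping, arranging for each $\zz^4$ block to simultaneously carry the packed bits of $X_i$ and the quotient-remainder witnesses consumed by the subsequent alternation. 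A second subtlety is verifying that the inner $\forall \exists$ (or $\exists \forall$) verifier, of the type from Theorem~\ref{th:PR3hard}, still forces the desired Boolean semantics once the outer $m$ quantifier blocks have been introduced; this is where the precise structure of the gadget matters most.
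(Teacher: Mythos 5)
Your upper bound is essentially the paper's (guess polynomially bounded witnesses for the outer $m$ blocks using Gr\"adel's bound, then decide the innermost two blocks in $\poly$), except that the innermost step needs Woods' theorem (Theorem~\ref{cor:PR2}) rather than Kannan's: $\Phi$ is a Boolean combination of systems, not a single conjunction, so Theorem~\ref{th:Kannan} does not directly apply. That part is repairable. The lower bound, however, has a gap that I do not think can be closed along the route you describe.

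The fatal point is the ``verification gadget.'' You want the two innermost blocks (a $\zz^{4}$ block and the final $\z_{m+2}\in\zz^{2}$) to check, via bit extraction from the packed integers $z_{i}$, that the CNF matrix of the quantified Boolean formula is satisfied. But the CNF matrix has an unbounded number of clauses and literal occurrences (it is part of the input), while a short Presburger expression has a \emph{fixed} number of inequalities and variables. Each quotient--remainder identity $z_{i}=q\cdot 2^{k}+r$, $0\le r<2^{k}$, extracts one bit at one position $k$; to evaluate the CNF you would need one such extraction per literal occurrence, i.e.\ unboundedly many auxiliary variables and inequalities, which cannot be amortized into a budget of $10m$ inequalities and $4m+1$ variables. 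This is precisely the obstruction the paper's proof is engineered to avoid: it first reduces $\Sigma_{m}$-QBF to \textsc{$m$-AP-COVER} (Theorem~\ref{th:m-AP-COVER}), encoding Boolean assignments not as packed bits but as residues modulo distinct primes, so that by the Chinese Remainder Theorem each clause collapses to a single arithmetic progression; it then compresses the entire (unboundedly large) union of arithmetic progressions into the continued-fraction expansion of a single rational $p/q$, so that non-membership in the union is tested by \emph{one} fixed-size $\for\y\,\ex\x$ parallelogram-emptiness gadget (Lemma~\ref{lem:easy}). Your appeal to ``a direct iteration of the gadget from Theorem~\ref{th:PR3hard}'' presupposes that the data to be verified is already in arithmetic-progression form, which your bit-packing encoding does not provide. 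Relatedly, the $\zz^{4}$ blocks in positions $3,\dots,m+1$ arise in the paper from merging, through the disjunctions, the $\for\y\,\ex\x$ gadget of one level with the quantifier of the next level's variable $z_{t}$ and its own $\Delta_{t}$-membership test; they are not free storage for packed bits plus quotient--remainder witnesses. As written, your ``main obstacle'' paragraph identifies the missing construction but does not supply it, and within your encoding it cannot be supplied.
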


The proof of the above results uses a chain of reductions.  We start with
the $\textsc{AP-COVER}$ problem on covering intervals with arithmetic progressions.
This problem is $\NP$-compete by a result of Stockmeyer and Meyer~\cite{MS}
(see Section~\ref{sec:AP}).  The arithmetic progressions are encoded via continued fractions by
a single rational number~$p/q$.  We use the plane geometry of
continued fractions and ``lift'' the construction to a Boolean
combination of polyhedra in dimension~5, proving Theorem~\ref{th:PR3hard}.
We then ``lift'' the construction further to convex polytopes $Q_1 \ssu \rr^9$ and
$Q_2 \ssu \rr^6$, which give proofs of theorems~\ref{cor:system1} and~\ref{cor:system2},
respectively.  While both constructions are explicit, the first construction
gives a description of~$Q_1$ by its 24~facets, while the second gives a description
of~$Q_2$ by its 40 vertices; the bound of~8400 facets then comes from McMullen's
Upper bound theorem (Theorem~\ref{lem:upper_bound}).  Finally, we generalize
the problem  $\textsc{AP-COVER}$ and the chain of reductions to $m\ge 3$ quantifiers.


\subsection{Applications in integer optimization}\label{ss:application}
The first application of our construction is the following hardness result
on the \emph{bilevel optimization} of a quadratic function over integer
points in a polytope.

\begin{theo}\label{th:minmax}
	Given a rational interval $J \subset \rr$, a rational polytope $W \subset \rr^{5}$ and a quadratic rational polynomial
$h : \rr^{6} \to \rr$, computing:
	\begin{equation}\label{eq:minmax}
	\max_{z \in J \cap \zz} \quad \min_{\w \in W \cap \zz^{5}} \quad h(z,\w)
	\end{equation}
	is $\NP$-hard. This holds even when $W$ has at most $18$ facets.
\end{theo}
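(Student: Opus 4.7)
The plan is to reduce from the NP-hard problem $\SPAt$ of Theorem~\ref{th:PR3hard}. Given an instance $\exists z\,\forall \y\,\exists \x : \Phi(\x,\y,z)$ with $z \in \zz$, $\y \in \zz^{2}$, $\x \in \zz^{2}$, and $\Phi$ a Boolean combination of at most $10$ linear inequalities, I will build a rational interval $J$, a rational polytope $W \subset \rr^{5}$ with at most $18$ facets, and a rational quadratic polynomial $h : \rr^{6}\to\rr$, so that the Presburger sentence holds iff the value $\max_{z\in J\cap\zz}\,\min_{\w\in W\cap\zz^{5}} h(z,\w)$ meets a fixed threshold. Writing $\w = (\y,\x,\tau)\in\zz^{5}$ with one auxiliary integer $\tau$, the outer $\exists z$ becomes $\max_{z \in J \cap \zz}$ immediately, with $J$ taken as the bounding interval for $z$ coming from the $\SPAt$ instance.

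I build $W$ to carry everything in $\Phi$ that is independent of $z$, using $\tau$ as a disjunct-selector in a big-$M$ flattening that converts disjunctions in $\Phi$ into a single conjunction of linear inequalities. The resulting $W$ carries bounding constraints for $\y,\x,\tau$ (roughly $4+4+2=10$ facets, or fewer if the bounds are triangular as in Theorem~\ref{cor:system1}), together with up to $8$ further linear facets for the flattened, $z$-free part of $\Phi$. Crucially, $W$ is fixed and cannot depend on $z$, so the $z$-coefficients in $\Phi$ must live inside $h$, where they enter as bilinear monomials $z\x_i$, $z\y_i$, $z\tau$, each of total degree at most $2$.

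The heart of the construction is the quadratic $h$, which I will engineer so that for every fixed pair $(z,\y)$,
\[
\min_{\x,\tau\,:\,\w\in W} h(z,\y,\x,\tau) \;=\; \begin{cases} 0 & \text{if } \exists\, \x\text{ with } \Phi(\x,\y,z),\\ -1 & \text{otherwise.}\end{cases}
\]
This will follow from a Lagrangian-style design $h = \tau\cdot V(\x,\y,z) + (\text{integer-slack corrections})$, where $V$ is a linear form supplied by the $z$-dependent part of $\Phi$ and chosen so that $V \le 0$ exactly when $\Phi$ is witnessed by the given $\x$. The integrality gap between \lq\lq $V \le 0$\rq\rq\ and \lq\lq $V \ge 1$\rq\rq\ over $\x\in\zz^{2}$ makes the inner minimizer either unable to extract negative value (when some $\x$ witnesses $\Phi$, so the best value is $0$) or able to drop to exactly $-1$ by tuning $\tau$ (when no witness exists). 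Consequently $\min_{\w}h(z,\w) = -1$ iff some $\y$ lacks a witness, and $\max_{z}\min_{\w}h = 0$ iff the $\SPAt$ sentence is true, giving the desired NP-hardness.

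The main obstacle is the joint design of $(W,h)$ so that the inner minimizer behaves as $\exists \x$ rather than $\forall \x$: I must choose signs and constants so that a non-witnessing $\x$ cannot exploit the bilinear term $\tau V$ to beat a witnessing alternative. I expect this to rest on the integrality of $\x$, careful tuning of the big-$M$ constants, and a bounded range for $\tau$ so that $\tau V$ is kept on a fixed scale. Enforcing this encoding while keeping the facet count at most $18$ and the polynomial degree at most $2$ is the central technical hurdle; the numerical budget ($\Phi$ has $\le 10$ inequalities and $W$ has $18 = 10+8$ facets, with $h$ quadratic) appears tight enough that the encoding must be chosen essentially as described.
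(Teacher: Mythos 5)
There is a genuine gap at exactly the point you flag as ``the central technical hurdle,'' and it is not a matter of tuning constants: no choice of signs, big-$M$ scaling, or $\tau$-range can make a pointwise quadratic $h$ behave as you require. You need, for each fixed $(z,\y)$, that $\min_{\x,\tau} h = 0$ when some $\x$ witnesses $\Phi$ and $=-1$ otherwise. Since the minimizer ranges adversarially over \emph{all} feasible $(\x,\tau)$, this forces the equivalence
$$
\ex \x\;\Phi(\x,\y,z) \quad \iff \quad \for (\x,\tau)\in W_{(z,\y)}\;\; h(z,\y,\x,\tau)\ge 0,
$$
i.e.\ you must convert the inner existential quantifier into a universal one — equivalently, certify $\neg\ts\ex\x\,\Phi$ by a \emph{single} feasible point at which $h<0$. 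A Lagrangian term $\tau\cdot V$ has the opposite polarity: if any non-witnessing $\x$ is feasible, the minimizer simply selects it and drives $h$ down, regardless of whether a witness also exists elsewhere; $h$ evaluated at one point cannot ``know'' about other points. In a $\max$-$\min$ the inner $\min$ buys you only one extra alternation (a $\forall$), so a black-box reduction from $\SPAt$ that folds the whole $\forall\ts\y\;\exists\ts\x$ block into one $\min_{\w}$ cannot work as stated — if it did, it would generically collapse $\forall\exists$ into a single $\exists$.

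The paper's proof supplies precisely the missing ingredient, and it is problem-specific rather than generic. One returns to the $\textsc{AP-COVER}$ construction, where the negation of the inner condition $\ex\x\in P_{\y}$ admits a \emph{dual certificate}: Lemma~\ref{lem:quad_cond} shows that $\u\in\mathcal{C}$ (equivalently $P_{\u}\cap\zz^{2}=\varnothing$) holds iff there exists $\v\in\mathcal{D}$ with $v_{2}u_{1}-v_{1}u_{2}=1$, while every feasible integer pair $(\u,\v)\in Q\times P$ satisfies $v_{2}u_{1}-v_{1}u_{2}\ge 1$. This turns the negated existential into a fresh existential over $\v$, detected by the quadratic determinant term $K(v_{2}u_{1}-v_{1}u_{2}-1)\ge 0$, which the minimizer can zero out exactly on the convergent chain; the congruence $z\equiv u_{2}\;(\textup{mod}\;M)$ is then tested by the second quadratic term $(u_{2}-z-tM)^{2}$. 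Unless you can produce an analogue of this unimodularity certificate for your flattened $\Phi$, your construction does not go through.
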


The polytope $W$ can be given either by its vertices or by its facets, as the
theorem holds in both cases.

\smallskip

The second application is to the hardness of the \emph{Pareto optima}.
Assume we are given polytope $Q \subset \rr^{n}$, and $k$ functions
$f_{1},\dots,f_{k}: \rr^{n} \to \rr$ restricted to the domain $Q \cap \zz^{n}$.
For a point $\x \in Q \cap \zz^{n}$, the corresponding outcome vector
$\y = (f_{1}(\x), \dots, f_{k}(\x))$ is called a \emph{Pareto minimum}, if there is no other point
$\wt \x \in Q \cap \zz^{n}$ and $\wt \y = (f_{1}(\wt \x), \dots, f_{k}(\wt \x))$,
such that $\wt \y \le \y$ coordinate-wise and $\wt \y \ne \y$.
The goal is to minimize the value of an \emph{objective function} $g:\rr^{k} \to \rr$
over all Pareto minima $\y$ of $(f_{1},\dots,f_{k})$ on~$Q$.

\begin{theo}\label{th:Pareto}
Given a rational polytope $Q \subset \rr^{6}$, two rational linear functions
$f_{1},f_{2}: \rr^{6} \to \rr$, a rational quadratic polynomial
$f_{3}:\rr^{6} \to \rr$, and rational linear objective function
$g:\rr^{3} \to \rr$, computing the minimum of $g$ over the Pareto minima of
$(f_{1},f_{2},f_{3})$ on $Q$ is $\NP$-hard. Moreover, the corresponding
$1/2$-approximation problem is also $\NP$-hard.
This holds even when $Q$ has at most $38$ facets.
\end{theo}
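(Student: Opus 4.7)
The plan is to reduce from the bilevel optimization result in Theorem~\ref{th:minmax}: given an instance $\max_{z \in J \cap \zz} \min_{\w \in W \cap \zz^5} h(z, \w)$, where $J$ is a rational interval, $W \ssu \rr^5$ is a rational polytope with at most $18$ facets, and $h$ is a quadratic rational polynomial, I will construct a Pareto instance on $\rr^6$ whose optimum encodes the max-min value.

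Set $\p = (z, \w) \in \zz \times \zz^5 = \zz^6$, take $Q := J \times W \ssu \rr^6$ (a rational polytope with at most $2 + 18 = 20 \le 38$ facets), and define the two linear functions
\[
f_1(\p) \, := \, -z \qquad\text{and}\qquad f_2(\p) \, := \, z,
\]
so that the dominance conditions $f_1(\wt\p) \le f_1(\p)$ and $f_2(\wt\p) \le f_2(\p)$ together force $\wt z = z$. Take the quadratic $f_3(\p) := h(z, \w)$ and the linear objective $g(y_1, y_2, y_3) := C - y_3$ for a suitably large rational constant~$C$. Under this setup, $\p = (z, \w)$ is a Pareto minimum if and only if $\w \in \arg\min_{\w' \in W \cap \zz^5} h(z, \w')$; the corresponding outcome is $(-z, z, h^*(z))$, where $h^*(z) := \min_{\w'} h(z, \w')$, so minimizing $g$ over all Pareto outcomes returns $C - \max_{z \in J \cap \zz} h^*(z)$. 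Hence computing the Pareto minimum is polynomial-time equivalent to solving the max-min, yielding the $\NP$-hardness.

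For the $1/2$-approximation statement, I would inspect the proof of Theorem~\ref{th:minmax}, which reduces from $\GIP$ via the continued-fraction encoding traced back to $\textsc{AP-COVER}$. That chain can be scaled so that yes-instances yield $\max_{z} h^*(z) \ge \alpha V$ while no-instances yield $\max_{z} h^*(z) \le V$, for any fixed constant $\alpha > 2$ and some $V > 0$. Choosing $C$ appropriately then produces Pareto optima with a multiplicative gap strictly greater than $2$ between yes- and no-instances, ruling out any polynomial-time factor-$2$ (i.e., $1/2$) approximation unless $\PP = \NP$.

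The main obstacle is establishing this quantitative gap: the yes/no separation built into $\textsc{AP-COVER}$ must survive the geometric liftings (continued-fraction staircases, polyhedral projections, and the quadratic encoding into $h$) in a constant-separated form that is amenable to scalar amplification. Once this is verified, the remaining bookkeeping — counting facets, checking integrality of the boundary of $Q$, and confirming the degrees of $f_1, f_2, f_3, g$ — all fits within the stated bounds.
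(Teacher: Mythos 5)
Your reduction for the plain $\NP$-hardness claim is essentially the paper's own: the paper likewise takes $Q = J \times W$, sets $f_1(\x) = z$, $f_2(\x) = -z$ (your signs are swapped, which changes nothing), $f_3 = h$, and $g(\y) = -y_3$, and argues exactly as you do that the Pareto minima are the outcomes $(z,-z,h^*(z))$ with $h^*(z) = \min_{\w \in W\cap\zz^5} h(z,\w)$, so that minimizing $g$ recovers the max--min value of Theorem~\ref{th:minmax}. That part is correct and complete.

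The gap is in the $1/2$-approximation claim, and you have located it yourself: you never establish the constant multiplicative separation between yes- and no-instances, you only assert that the chain of reductions ``can be scaled'' to produce one. That separation is precisely the nontrivial content of this half of the theorem, and the scaling strategy you propose is not the route that works. The quantity $\max_z h^*(z)$ arising from the proof of Theorem~\ref{th:minmax} equals the largest squared distance from an integer $z \in J$ to the union $\AP_1 \cup \dots \cup \AP_k$, and there is no evident way to amplify this by a constant factor $\alpha$ without destroying the arithmetic-progression structure of the reduction. What the paper does instead is pin the value down to $\{0,1\}$: in the reduction from $\textsc{3SAT}$ to $\textsc{AP-COVER}$ (Theorem~\ref{th:SM}) one uses the primes $3,5,\dots$ in place of $2,3,\dots$ and additionally excludes the progression $z \equiv 0 \pmod 2$, so that the union of progressions covers every even integer. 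Any uncovered $z$ is then at distance exactly $1$ from the union, hence $\max_z h^*(z) \in \{0,1\}$, and a multiplicative $1/2$-approximation of the Pareto optimum (with $g = -y_3$, so the optimum is $0$ or $-1$) decides $\textsc{AP-COVER}$. Note also that your choice $g = C - y_3$ with $C$ ``suitably large'' works against you here: the two candidate optima $C$ and $C-1$ have ratio tending to $1$ as $C$ grows, so no fixed multiplicative gap survives; the optimum values must be kept at $0$ versus a fixed nonzero number, as in the paper.
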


Again, the polytope $Q$ can be given either by its vertices or by its facets.
Here by $\ve$-approximation we mean approximation up to a multiplicative factor
of~$\ve$.

\smallskip

We prove both theorems in Section~\ref{sec:optimization}.
See also $\S$\ref{ss:finrem-minmax} and $\S$\ref{ss:finrem-Pareto}
for some background and open problems.


\subsection{Historical overview}
\emph{Presburger Arithmetic} was introduced by Presburger in~\cite{Pre},
where he proved it is a decidable theory. The general theory allows unbounded
numbers of quantifiers, variables and Boolean operations.
 A quantifier elimination (deterministic) algorithm was given by Cooper~\cite{C},
 and was shown to be triply exponential by Oppen~\cite{O} (see also~\cite{RL}).
A nondeterministic doubly exponential complexity lower bound was obtained
by Fischer and Rabin~\cite{FR} for the general theory.  This pioneering
result was further refined to
a triply exponential deterministic lower bound (with unary output) in~\cite{Wei},
and a simply exponential nondeterministic lower bound for a bounded
number of quantifier alternations~\cite{Fur} (see also~\cite{Sca}).
Of course, in all these cases the number of variables is unbounded.

In~\cite{S}, Sch\"{o}ning proved \NP-completeness for two quantifiers
$\ts\exists y \ts \forall x\ts:\ts\Phi(x,y)$, where $x,y\in \zz$
and $\ts\Phi(x,y)\ts$ is a Presburger expression in $2$ variables,
i.e., a Boolean combination of arbitrarily many inequalities in $x,y$.
This improved on an earlier result by Gr\"{a}del, who also established
that similar sentences with $m+1$ alternating quantifiers and a
bounded number of variables are complete for the $m$-th level in
the Polynomial Hierarchy~\cite{G}.  Roughly speaking, one can view
our results as variations on Gr\"{a}del's result, where we trade
boundedness of~$\Phi$ for an extra quantifier.

Let us emphasize that when the number of variables is unbounded, even the most
simple systems~$\IPP$ become $\NP$-complete. The examples include the
\textsc{KNAPSACK}, one of the oldest $\NP$-complete problems~\cite{GJ}.
Note also that even when matrix~$A$ has at most
two nonzero entries in each row, the problem remains $\NP$-complete~\cite{Lag}.


In a positive direction, the progress has been limited.  The first
breakthrough was made by Lenstra~\cite{L}  (see also~\cite{Schrijver}),
who showed that $\IPP$ can be solved
in polynomial time in a fixed dimension (see also~\cite{E1} for better bounds).
Combined with a reduction by Scarpellini~\cite{Sca}, this implies that deciding
$\SPAo$ is in~$\PP$.

The next breakthrough was made by Kannan~\cite{K1} (see also~\cite{K2}),
who showed that $\PIPP$ in fixed dimensions is in~$\PP$, even if the number~$s$
of inequalities is unbounded, i.e. the matrices $A$ and $B$ can
be ``long''.  This was a motivation for our earlier Theorem~\ref{th:NP-Kannan} from~\cite{KannanNPC},
which ruled out ``long'' systems for~$\GIP$.

\begin{theo}[Kannan]\label{th:Kannan}
Fix $n_1,n_2$.  The formula $\PIPP$ in variables
$\x \in \zz^{n_{1}}$, $\y \in \zz^{n_{2}}$ with $s$ inequalities can be
decided in polynomial time, where $s$ is part of the input.
\end{theo}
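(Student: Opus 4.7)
The plan is to decide the $\forall$-$\exists$ sentence $\PIPP$ by testing whether the set of bad parameters
\[
S \, := \, \bigl\{\y \in Q \cap \zz^{n_2} \,:\, P_\y \cap \zz^{n_1} = \nothing\bigr\},
\]
where $P_\y := \{\x \in \rr^{n_1} : A\x \le \ov b - B\y\}$, is empty. For each fixed $\y$, Lenstra's algorithm solves the fiber problem in polynomial time, but $|Q \cap \zz^{n_2}|$ can be exponential, so the challenge is to handle all $\y$ at once without enumeration.

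The main tool I would use is Khinchine's flatness theorem combined with a parametric refinement of Lenstra's branching. A lattice-free rational polytope in $\rr^{n_1}$ has width at most $\omega(n_1)$ in some primitive direction $\v \in \zz^{n_1}$, so $\y \in S$ iff there is a primitive $\v$ with
\[
\max_{\x \in P_\y} \. \v \cdot \x \; - \; \min_{\x \in P_\y} \. \v \cdot \x \, < \, \omega(n_1),
\]
and each of the $O(1)$ integer slices $\v \cdot \x = t$ inside $P_\y$ is itself integer-infeasible. Kannan's key innovation is to enumerate in $\poly(s)$ time a set $\cV$ of candidate directions~$\v$ whose cardinality depends only on $n_1,n_2$, such that for every $\y \in S$ some $\v \in \cV$ certifies infeasibility of~$P_\y$. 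This exploits that a flatness-inducing $\v$ is determined by only $O(n_1)$ tight facets at the two extremal vertices of~$P_\y$, so the viable $\v$'s come from a finite collection of cones attached to the combinatorial types of the joint polyhedron $\{(\x,\y) : A\x + B\y \le \ov b,\, K\y \le \ov u\} \ssu \rr^{n_1+n_2}$; in fixed total dimension this collection has polynomial size in~$s$.

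Once $\cV$ is in hand, for each $\v \in \cV$ and each admissible integer slice value~$t$, I would substitute $\v \cdot \x = t$ into the system and recurse on~$n_1$, obtaining a parametric IP on an $(n_1-1)$-dimensional sublattice; the base case $n_1 = 0$ is a condition purely in $\y$, decidable by testing a Boolean combination of integer linear inequalities. The hard part is producing the polynomial-size candidate set~$\cV$: one must show that a flatness direction of bit-length $\poly(\langle A, B, \ov b, K, \ov u\rangle)$, independent of~$s$, always suffices, and that the relevant directions can be read off from a uniformly bounded list of ``thin slab'' patterns valid simultaneously across all fibers~$P_\y$. This uniform, parameter-free bound on the direction complexity---essentially a quantitative flatness theorem applied uniformly in the parameter~$\y$---is the technical heart of Kannan's argument, and the step that separates it from a routine fiber-by-fiber extension of Lenstra's algorithm.
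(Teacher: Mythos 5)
You should note first that the paper does not actually prove Theorem~\ref{th:Kannan}: it is quoted from Kannan \cite{K1} (see also \cite{K2}), and Section~\ref{sec:kannan} is devoted to showing that the partition theorem on which Kannan's original argument rests (KPT, Theorem~\ref{th:KPT}) is in fact \emph{false} in its stated form (Theorem~\ref{th:KPT-exp}), the theorem being rescued only by the Eisenbrand--Shmonin result (Theorem~\ref{th:ES}, \cite{ES}), whose pieces are integer projections of polyhedra rather than copolyhedra. So your proposal has to stand on its own, and it does not: you explicitly defer the decisive step --- the construction of a polynomial-size, parameter-uniform family of candidate flatness directions, and implicitly of candidate solutions attached to them --- calling it ``the technical heart of Kannan's argument.'' That step carries essentially all of the difficulty, and history shows it is exactly where the subtlety lies: in its strong form (polynomially many polyhedral pieces of the parameter space, each with a bounded test set of the form $S\floor{T\b}$) the claim is refuted by this very paper, so it certainly cannot be asserted without proof. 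Your supporting claim that a flatness direction ``is determined by $O(n_1)$ tight facets at the two extremal vertices of $P_\y$'' is also circular (the extremal vertices depend on the direction one is trying to find) and ignores the lattice-reduction input to Khinchine's theorem: the width-minimizing direction depends on the global shape of the fiber and on the lattice, not merely on the combinatorial type of the joint polyhedron, so the passage from ``polynomially many chambers in fixed dimension'' to ``polynomially many candidate directions'' is precisely what must be proved.

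Even granting a polynomial-size direction set $\mathcal{V}$, your recursion does not yet yield a polynomial-time decision procedure for the $\for\ts\ex$ sentence. After slicing along $\v \cdot \x = t$, the admissible values of $t$ depend on $\y$ through $\lceil \min_{\x \in P_\y} \v \cdot \x \rceil$, i.e., through floors of piecewise-linear functions of the parameter; unwinding the recursion therefore leaves a sentence with a bounded number of integer variables but a Boolean combination of unboundedly many inequalities under alternating quantifiers, and deciding such sentences is NP-complete in general (Sch\"oning \cite{S}, Gr\"adel \cite{G}, as recalled in the paper). What makes Kannan's and Eisenbrand--Shmonin's arguments work is the elimination of the inner existential quantifier by exhibiting, on each piece of a suitable partition of the parameter space, a \emph{bounded} list of explicit candidate solutions $\x = S\floor{T\b}$ to test --- together with the correct (weaker) structure of the pieces. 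That mechanism, which is the actual content of the theorem, is missing from your sketch, so the proposal is an outline of the known strategy rather than a proof.
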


Kannan's Theorem was further strengthened by Eisenbrand and Shmonin~\cite{ES}
(see~$\S$\ref{ss:kannan-ES}).  All of these greatly contrast with the
above hardness results by Sch\"{o}ning and Gr\"{a}del,
because here only conjunctions of inequalities are allowed.

The corresponding counting problems have also been studied with great success.
First, Barvinok~\cite{B1} showed that integer points in a convex polytope $P\ssu\rr^d$
can be counted in polynomial time, for a fixed dimension~$n$ (see also~\cite{B2,BP}).  He utilized the
\emph{short generating function} approach pioneered by Brion, Vergne and others
(see~\cite{B3} for details and references).
Woods~\cite{W1} extended this approach to general Boolean formulas.

In the next breakthrough, Barvinok and Woods showed how to
count projections of integer points in a (single) polytope in polynomial
time~\cite{BW}.  Woods~\cite{W1} extended this approach to general Presburger
expressions~$\Phi$ with a fixed number of inequalities
(see also~\cite{W2} and an alternative proof in~\cite{shortPR}).
As a consequence, he showed that deciding $\SPAd$ is in~$\PP$.
This represents the most general positive result in this direction:

\begin{theo}[Woods]  \label{cor:PR2}
Fix $n_{1},n_{2}$ and $s$.
Given a short Presburger expression $\Phi(\x,\y)$ in variables $\x \in \zz^{n_{1}},\y \in \zz^{n_{2}}$
with at most $s$ inequalities, the sentence
\begin{equation*}
\forall \ts\y \;\; \exists \ts\x \, : \, \Phi(\x,\y)
\end{equation*}
can be decided in polynomial time.   Moreover, the number of solutions
\begin{equation*}
\# \, \big\{ \ts\y \, : \, \exists \ts\x  \;\;  \Phi(\x,\y)\big\}
\end{equation*}
can be computed in polynomial time.
\end{theo}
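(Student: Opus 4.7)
The plan is to reduce both assertions to operations on short rational generating functions in the Barvinok--Woods framework. The only quantifier alternation is the inner $\ex$, so after eliminating it by projection we are left with a single set in $\y$-space whose generating function we can control.

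First, put $\Phi(\x,\y)$ into disjunctive normal form. Since $\Phi$ uses at most $s$ inequalities and $s$ is fixed, the DNF has at most $2^s = O(1)$ conjunctive clauses, each cutting out a rational polyhedron $P_j \ssu \rr^{n_1+n_2}$ (possibly unbounded). By Barvinok's theorem in fixed dimension, for each $P_j$ one computes in polynomial time a short rational generating function
$$g_j(\x,\y) \. = \sum_{(\alpha,\beta) \in P_j \cap \zz^{n_1+n_2}} \x^\alpha \ts \y^\beta.$$
Using inclusion--exclusion over the $O(2^s)$ clauses, together with the Barvinok--Woods algorithm for Hadamard (intersection) products of short generating functions, these combine into a single short rational $g(\x,\y)$ whose support is exactly $\{(\alpha,\beta): \Phi(\alpha,\beta)\}$. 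Everything remains polynomial time because $s, n_1, n_2$ are fixed.

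Next, invoke the Barvinok--Woods projection theorem to eliminate $\x$, producing in polynomial time a short rational generating function $h(\y)$ whose support is $T = \{\y \in \zz^{n_2} : \ex\ts\x \;\; \Phi(\x,\y)\}$. With $h$ in hand, both parts of the theorem follow. For counting, $\#T$ is either recognized as infinite from the pole structure of $h$, or extracted as $h(\mathbf{1})$ via Barvinok's residue technique for evaluating short rational functions at their poles. For the decision part, $\for\y\;\ex\x\;\Phi$ is equivalent to $\zz^{n_2}\setminus T = \emp$; one forms
$$h'(\y) \. = \. \prod_{i=1}^{n_2} \frac{1}{1-y_i} \. - \. h(\y),$$
which is a short rational generating function for $\zz^{n_2}\setminus T$, and tests whether its support is empty, a polynomial-time primitive in this framework (for instance by evaluation at a generic rational point, or by checking that the rational function vanishes identically).

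The main obstacle is that the intermediate sets are typically unbounded, so one cannot literally enumerate lattice points at any stage; the entire argument hinges on carrying short rational generating functions through the operations union, intersection, complement, projection, and evaluation at poles, each of which must run in polynomial time when the ambient dimension is fixed. The heart of the proof is thus the Hadamard/projection toolbox of Barvinok--Woods, combined with Woods' insight that a fixed Boolean combination of a fixed number of inequalities contributes only a constant blow-up in the representation, placing arbitrary short Presburger expressions $\Phi$ algorithmically on a par with single polyhedra.
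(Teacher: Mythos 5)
The paper does not prove this statement; it is quoted as Woods' theorem, with \cite{W1,W2}, the Barvinok--Woods projection paper \cite{BW}, and the alternative proof in \cite{shortPR} as references. Your outline is indeed the intended route -- DNF over the $O(2^s)$ clauses, short rational generating functions, Hadamard products for the Boolean combination, projection to eliminate $\ex\ts\x$, then specialization to count -- so on the level of strategy you have reconstructed the right argument.

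However, two steps as written would fail, and they are precisely where the real work lies. First, the Barvinok--Woods projection theorem of \cite{BW} is proved for \emph{polytopes}, i.e.\ bounded $P_j$; you name unboundedness as ``the main obstacle'' and then proceed as if the projection step applies anyway. It does not, and extending it is the nontrivial content of Woods' theorem. The standard repair is to first invoke a priori witness bounds (the paper itself uses Theorem~3.8 of \cite{G} for exactly this purpose): if $\for\y\,\ex\x\,\Phi$ fails, it fails at some $\y$ of polynomial bit-length, and for each such $\y$ a witness $\x$ of polynomial bit-length exists when one exists at all; hence all variables may be confined to a box of size $2^{\mathrm{poly}(|\Phi|)}$ before any generating function is computed, after which \cite{BW} applies verbatim. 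Second, your emptiness test for $\zz^{n_2}\setminus T$ is unsound over an unbounded ambient set: $\prod_i (1-y_i)^{-1}$ is the generating function of $\nn^{n_2}$, not $\zz^{n_2}$, and by Lawrence's theorem the indicator of any polyhedron containing a line (in particular $\zz^{n_2}$ itself) maps to the \emph{zero} rational function, so ``the rational function vanishes identically'' does not certify that the underlying set is empty (the empty set and $\zz^{n_2}$ have the same image). Once everything is restricted to a bounded box as above, the complement is taken inside the box, its generating function is a genuine short rational function, and emptiness is decided by Barvinok's specialization at $\y=\mathbf 1$; the counting claim is handled the same way. With those two repairs your argument becomes the proof the paper is citing.
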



\subsection{Kannan's Partition Theorem}\label{sec:KPT_intro}
In~\cite{K1}, Kannan introduced
the technology of \emph{test sets} for efficient solutions of~$\PIPP$.
The \emph{Kannan Partition Theorem} (KPT), see Theorem~\ref{th:KPT} below,
claims that one can find in polynomial time
a partition of the $k$-dimensional parameter space $W$ into polynomially
many rational (co-)polyhedra
$$(\circ) \qquad
W \, = \, P_1 \,\sqcup\, P_2  \,\sqcup\, \dots \,\sqcup\, P_r\ts,
$$
so that only a bounded number of tests need to be performed
(see $\S$\ref{ss:kannan-KPT} for precise statement details).

In~\cite{shortPR}, we showed that KPT if valid would imply a polynomial time decision algorithm for $\SPAm$, and in particular $\GIP$ for a restricted system.
Thus, at the time of proving Theorem~\ref{th:NP-Kannan} in~\cite{KannanNPC}, we thought that~\cite{shortPR} and~\cite{KannanNPC} together would completely characterize the complexity of $\GIP$, depending on whether the system is restricted or not.

In view of our theorems~\ref{th:PR3hard},~\ref{cor:system1},~\ref{cor:system2} and~\ref{th:PH}, it strongly suggests that KPT may actually be erroneous.
However, we did not
expect this at the time of writing \cite{shortPR}.  In fact,
the prevailing view was that $\SPAm$ would always be in~$\PP$,
which neatly aligned with the results in~\cite{shortPR} (conditional upon KPT). Now that the hardness results are known, we are actually able combine the current techniques with some of those in~\cite{shortPR} to obtain
the following quantitative result, which strongly contradicts KPT:

\begin{theo}\label{th:KPT-exp}
Fix $m,n$ and let $k=1$.  Let $\phi$ be the total bit length of the matrix
$A \in \zz^{m \times n}$ in $\KPT$.
Then for the number $r$ of pieces in Kannan's partition~$(\circ)$, we must have
$r > \exp(\ep\phi)\ts$ for some constant $\ep=\ep(n,m)>0$.
\end{theo}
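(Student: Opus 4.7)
The plan is to derive Theorem~\ref{th:KPT-exp} by combining the conditional decision algorithm from \cite{shortPR}, which uses a hypothetical Kannan partition to solve $\SPAm$, with the $\NP$-hardness reductions underlying Theorems~\ref{th:PR3hard} and~\ref{cor:system1}. Specifically, I would first refine the construction in \cite{shortPR} to obtain the following quantitative statement: if for a given PIP instance of bit length $\phi$ a Kannan partition $(\circ)$ of size $r$ is available, then the corresponding $\GIP$ (with restricted system length) or $\SPAt$ sentence can be decided in time $r^{O(1)} \cdot \polyin{\phi}$. This bound comes from running the Barvinok--Woods short generating function machinery once per piece $P_i$, each application taking polynomial time in $\phi$ for fixed dimension $n$, $m$ and $k=1$.

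Second, I would feed into this conditional algorithm the $\GIP$ instances produced by the chain of reductions of this paper. Starting from an instance of \textsc{AP-COVER} of combinatorial size $L$, the continued-fraction encoding outlined after Theorem~\ref{th:PH} yields a $\GIP$ instance of the shape required by KPT whose total bit length is $\phi = O(L)$. The $\NP$-hardness of \textsc{AP-COVER} of Stockmeyer--Meyer, combined with the Exponential Time Hypothesis, implies that such instances require time $\exp(\Omega(L)) = \exp(\Omega(\phi))$.

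Combining the two, if every PIP instance of bit length $\phi$ falling in this family admitted a Kannan partition of size $r \le \exp(\ep \phi)$ with $\ep$ arbitrarily small, then the refined algorithm would decide our hard $\GIP$ instances in time $\exp(O(\ep \phi)) \cdot \polyin{\phi}$, contradicting the $\exp(\Omega(\phi))$ lower bound. This forces $r > \exp(\ep \phi)$ for some absolute constant $\ep = \ep(n,m) > 0$, as claimed.

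The main obstacle is to make the dependence on $r$ in the \cite{shortPR} argument explicit and polynomial, rather than leaving it hidden inside the blanket ``polynomial time if KPT holds'' statement. This requires a careful revisit of the projection and intersection steps for short generating functions, verifying that the cost for each of the $r$ pieces is $\polyin{\phi}$ with no multiplicative blowup when pieces are combined. A secondary issue is the appeal to ETH; one could instead pursue a direct combinatorial lower bound by analyzing the continued-fraction gadget and showing that the parameter space $W$ contains $\exp(\Omega(\phi))$ alternating yes/no slabs, so that the uniformity requirement on a KPT piece (bounded test set) prevents any single $P_i$ from straddling more than $\polyin{\phi}$ such slabs. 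Either route delivers the stated exponential bound on $r$.
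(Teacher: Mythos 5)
Your main route has a genuine gap: it can only deliver a \emph{conditional} statement, whereas Theorem~\ref{th:KPT-exp} is unconditional. Deducing a lower bound on $r$ from the running time of the conditional algorithm of~\cite{shortPR} requires a complexity-theoretic hypothesis (ETH, or at least $\PP\neq\NP$), and even granting ETH the quantitative conclusion does not come out right: the chain $\textsc{3SAT}\to\textsc{AP-COVER}\to\GIP$ blows up an $\ell$-variable formula into an instance of bit length $\phi=\polyin{\ell}$ with a super-linear polynomial (the modulus $M$ involves products over all $k=n+\sum_j(p_j-1)=\Theta(\ell^2\log\ell)$ progressions), so ETH yields only $\exp(\Omega(\phi^{c}))$ for some $c<1$, not $\exp(\Omega(\phi))$. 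There is also a subtler issue: to run the algorithm you need the partition to be \emph{computed}, not merely to exist, so a lower bound obtained this way constrains the computable-partition version of KPT rather than the number of pieces $r$ itself.

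The paper's proof is instead a direct, unconditional combinatorial contradiction, and your secondary route points in that direction but misses its two key ingredients. First, a ``blow-up'' pigeonhole step (Lemmas~\ref{lem:one_segment} and~\ref{lem:one_piece}): one embeds the original one-parameter problem into a problem over a much longer interval $[0,MN)$ so that if $r\ll M$ some single piece $J_i$ must contain an entire translate of the original parameter range, producing a \emph{single} test set of constant size $c(n)$ valid for all parameters. Second, a midpoint argument (Corollary~\ref{cor:mid_point}): a constant-size test set of maps $S_j\lfloor T_j\y\rfloor$ makes the infeasible parameter set definable by a short Presburger formula with a bounded number of variables and inequalities, and any such set with more than a constant number of integer points must contain three points $\y_1,\y_2,\y_3$ with $\y_3=(\y_1+\y_2)/2$. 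The contradiction then comes from the Fibonacci continued fraction $[2;1,\dots,1]$, whose chain $\mathcal{C}'$ has its integer points exactly at the convergents $C_1,\dots,C_{s+1}$, which are in strictly convex position and hence contain no midpoint triple. Your ``alternating yes/no slabs'' heuristic does not substitute for this: a single piece $P_i$ carries test candidates of the form $S_{ij}\lfloor T_{ij}\b\rfloor$, which are themselves piecewise-linear in $\b$ with many breakpoints, so nothing a priori prevents one piece from straddling exponentially many feasibility alternations. The convexity-versus-midpoint argument is precisely what closes that loophole.
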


We conclude no polynomial size partition~$(\circ)$ exists as claimed by KPT.
See Section~\ref{sec:kannan}  for a detailed presentation of this result and its implications,
$\S$\ref{ss:finrem-KPT} for our point of view, and
$\S$\ref{ss:finrem-KPT-gap} for the gap in the original proof of KPT.

\bigskip

\section{Notations}

\begin{itemize}
\item[]
We use $\nn \ts = \ts \{0,1,2,\ldots\}$ and $\zz_{+} \ts = \ts \{1,2,3,\ldots\}$

\item[]
Universal/existential quantifiers are denoted $\for/\ex$.

\item[]
Unspecified quantifiers are denoted by $Q_{1}, Q_{2}$, etc.


\item[]
Unquantified Presburger expressions are denoted by $\Phi, \Psi$, etc.


\item[]
We use $\left[
\begin{smallmatrix}
a \\
b
\end{smallmatrix}
\right]
$ for a disjunction $(a \lor b)$ and $
\{
\begin{smallmatrix}
a \\
b
\end{smallmatrix}
\}
$ for a conjunction $(a \land b)$.

\item[]
All constant vectors are denoted $\n, \b, \albar, \nubar$, etc.

\item[]
We use $0$ to denote both zero and the zero vector.


\item[]
All matrices are denoted $A, B, C$, etc.

\item[]
All integer variables are denoted $x,y,z$, etc.

\item[]
All vectors of integer variables are denoted $\x, \y, \z$, etc.



\item[]
In a vector $\y = (y_{1},y_{2})$, we draw $y_{2}$ as a vertical and $y_{1}$ as a horizontal coordinate.

\item[]
We use $\floor{.}$ to denote the floor function.

\item[]
The the vector $\y$ with coordinates $y_{i} = \floor{x_{i}}$ is denoted by $\y = \floor{\x}$.




\item[]
Half-open intervals are denoted by $[\al,\be)$, $(\al,\be]$, etc.

\item[]
A \emph{polyhedron} is an intersection of finitely many closed half-spaces in~$\rr^n$.

\item[]
A \emph{copolyhedron} is a polyhedron with possibly some open facets.

\item[]
A \emph{polytope} is a bounded polyhedron.

\item[]
Subsets of $\nn$ are denoted by $\Ga, \De$, etc.


\end{itemize}

\bigskip

\section{Basic properties of finite continued fractions}\label{sec:frac}

\nin
Every rational number $\al > 1$ can be written in the form:
\begin{equation*}
\al \; = \; [a_{0};\; a_{1},\, \dots,\, a_{n}] \; = \; a_{0} + \cfrac{1}{a_{1} + \cfrac{1}{\ddots + \cfrac{1}{a_{n}}}} \; ,
\end{equation*}
where $a_{0},\dots,a_{n} \in \zz_{+}$.
If $a_{n} > 1$, we have another representation:
\begin{equation*}
\al \; = \; [a_{0};\; a_{1},\, \dots,\, a_{n}-1,\, 1] \; = \; a_{0} + \cfrac{1}{a_{1} + \cfrac{1}{\ddots + \cfrac{1}{(a_{n}-1) + \cfrac{1}{1}}}} \; .
\end{equation*}
On the other hand, if $a_{n} = 1$, then we also have:
\begin{equation*}
\al \; = \; [a_{0};\; a_{1},\, \dots,\, a_{n-1},\, 1] \; = \; [a_{0};\; a_{1},\, \dots,\, a_{n-1} + 1].
\end{equation*}

It is well known that any rational $\al > 1$ can be written as a continued fraction
as above in exactly two ways (see e.g.~\cite{Kar,Khin}), one with an odd number of
terms and the other one with an even number of terms.


If a continued fraction $[a_{0};\; a_{1},\, \dots,\, a_{n}]$ evaluates to a rational value $p/q$, we identify it with the integer point $(q,p)$.
We write:
\begin{equation*}
(q,p) \; \lra \; [a_{0};\; a_{1},\, \dots ,\, a_{n}].
\end{equation*}

From now on, we will only consider continued fractions with an odd number of terms:
\begin{equation*}
\al \; = \; [a_{0};\; a_{1},\, \dots,\, a_{2k}].
\end{equation*}
To facilitate later computations, we will relabel these $2k+1$ terms as:
\begin{equation*}
\al \; = \; [a_{0};\; b_{0},\, a_{1},\, b_{1},\, \dots,\, a_{k-1},\, b_{k-1},\, a_{k}].
\end{equation*}
The convergents of $\al$ are $2$-dimensional integer vectors, defined as:
\begin{equation}\label{eq:conv_def}
\aligned
C_{0} = (1,0) & \,,\; D_{0} = (0,1), \\
C_{i} = a_{i-1}D_{i-1} + C_{i-1} & \,,  \text{ for } i = 1, \dots, k+1, \\
D_{i} = b_{i-1}C_{i} + D_{i-1} & \,, \text{ for } i = 1, \dots, k.
\endaligned
\end{equation}
We call $C_{0},D_{0},\dots,C_{k},D_{k}, C_{k+1}$ the convergents for $\al$.
If $C_{i} = (q_{i},p_{i})$ and $D_{i} = (s_{i},r_{i})$ then we have the properties:

\smallskip
\begin{itemize}

\item[P1)] $p_{0}=0$, $q_{0}=1$, $r_{0}=1$, $s_{0}=0$.

\item[P2)] $p_{i}=a_{i-1}r_{i-1} + p_{i-1}$, $q_{i}=a_{i-1}s_{i-1} + q_{i-1}$.

\item[P3)] $r_{i}=b_{i-1}p_{i} + r_{i-1}$, $s_{i}=b_{i-1}q_{i} + s_{i-1}$.

\item[P4)] $C_{i+1} = (q_{i+1},p_{i+1}) \lra [a_{0};\; b_{0},\, a_{1},\, b_{1},\, \dots,\,  b_{i-1},\, a_{i}]$.

\item[P5)] The quotients $p_{i}/q_{i}$ form an increasing sequence, starting with $p_{0}/q_{0} = 0$ and ending with $p_{k+1}/q_{k+1} = \al$.

\item[P6)] $D_{i+1} = (s_{i+1},r_{i+1}) \lra [a_{0};\; b_{0},\, a_{1},\, b_{1},\, \dots,\, a_{i},\, b_{i}]$.

\item[P7)] The quotients $r_{i}/s_{i}$ form a decreasing sequence, starting with $r_{0}/s_{0} = \infty$, and ending with $r_{k}/s_{k} = [a_{0};\; b_{0},\, a_{1},\, b_{1},\, \dots,\, a_{k-1},\, b_{k-1}]$.

\end{itemize}


\begin{figure}[hbt]
\begin{center}
\psfrag{C0}{\small$C_0$}
\psfrag{C1}{\small$C_1$}
\psfrag{C2}{\small$C_2$}
\psfrag{Ck+1}{\small$C_{k+1}$}

\psfrag{D0}{\small$D_0$}
\psfrag{D1}{\small$D_1$}
\psfrag{Dk}{\small$D_k$}

\psfrag{y1}{\small$y_1$}
\psfrag{y2}{\small$y_2$}

\epsfig{file=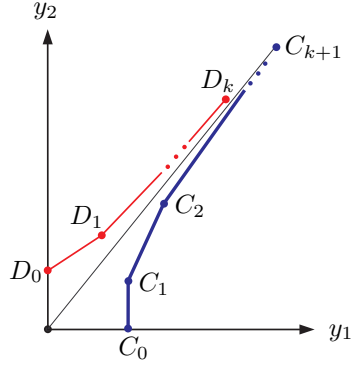,width=5cm}
\end{center}

\vspace{-2em}

\caption{The curves $\mathcal{C}$ (bold) and $\mathcal{D}$.}
\label{f:continued_fraction}
\end{figure}

\smallskip

\nin
Denote by $O$ the origin in $\zz^{2}$. The geometric properties of these convergents are:

\smallskip


\begin{itemize}

\item[G1)] Each vector $\ovv{O C_{i}}$ and $\ovv{O D_{i}}$ is primitive in $\zz^{2}$, meaning $\gcd(p_{i},q_{i}) = \gcd(r_{i},s_{i}) = 1$.

\item[G2)] Each segment $\seg{C_{i} C_{i+1}}$ contains exactly $a_{i}+1$ integer points, since $\ovv{C_{i} C_{i+1}} = a_{i} \ovv{O D_{i}}$.

\item[G3)] Each segment $\seg{D_{i} D_{i+1}}$ contains exactly $b_{i}+1$ integer points, since $\ovv{D_{i} D_{i+1}} = b_{i} \ovv{O C_{i+1}}$.

\item[G4)] The curve $\mathcal{C}$ connecting $C_{0},C_{1},\dots,C_{k+1}$ is (strictly) convex upward (see Figure~\ref{f:continued_fraction}).

\item[G5)] The curve $\mathcal{D}$ connecting $D_{0},D_{1},\dots,D_{k}$ is (strictly) convex downward.

\item[G6)] There are no interior integer points above $\mathcal{C}$ and below $\ovv{O C_{k+1}}$.
In other words, $\mathcal{C}$ is the upper envelope of all non-zero integer points between $\ovv{O C_{0}}$ and $\ovv{O C_{k+1}}$.
\end{itemize}

\bigskip

\section{From arithmetic progressions to short Presburger sentences}\label{sec:main_proof}

\subsection{Covering with arithmetic progressions}\label{sec:convert_covering}

For a triple $(g,h,e)\in\NN^{3}$, denote by $\AP(g,h,e)$ the arithmetic progression:
\begin{equation*}
\AP(g,h,e) \; = \; \{g + je \;:\; 0 \le j \le h\}.
\end{equation*}
We reduce the following classical $\NP$-complete problem to $\SPAt$:

\problemdef{AP-COVER}
{An interval $J = [\mu,\nu] \subset \zz$ and $k$ triples $(g_{i},h_{i},e_{i})$ for $i=1,\dots,k$.}
{Is there $z \in J$ such that $z \notin \AP_{1} \cup \dots \cup \AP_{k}$, where $\AP_{i} = \AP(g_{i},h_{i},e_{i})$?}

The problem
$\textsc{AP-COVER}$ was shown to be $\NP$-complete by Stockmeyer and Meyer (Theorem~\ref{th:SM}).
A short proof of this is included in \S\ref{sec:AP-COVER} for completeness.
We remark that the inputs $\mu,\nu,g_{i},h_{i},e_{i}$ to the problem are in binary.
We can assume that each $h_{i} \ge 1$, i.e., each $\AP_{i}$ contains more than $1$ integer.
This is because we can always increase $\nu \gets \nu+1$ and add the last integer $\nu+1$
to any progression $\AP_{i}$ that previously had only a single element.  Note that
$\textsc{AP-COVER}$ is also invariant under translation, so we can assume that
$\mu,\nu$ and all $g_{i},h_{i},e_{i}$ are positive integers.

Next, let:
\begin{equation*}
M \; = \; 1+ \nu \prod_{i=1}^{k} g_{i}(g_{i} + h_{i} e_{i}).
\end{equation*}
We have:
\begin{equation*}
M > \nu \quad \text{and} \quad M > \max_{i}(g_{i} + h_{i} e_{i}).
\end{equation*}
i.e., the interval $[1,M-1]$ contains $J$ and all $\AP_{i}$.
Moreover, we have:
\begin{equation}\label{eq:gcd_cond}
\gcd(M,g_{i}) = \gcd(M,g_{i}+h_{i}e_{i}) = 1, \; i=1,\dots,k.
\end{equation}
Note that $M$ can be computed in polynomial time from the input of $\textsc{AP-COVER}$, and
\begin{equation*}
\log M \, = \, O \left( \sum_{i=1}^{k} \, \log g_{i} + \log h_{i} + \log e_{i} \right).
\end{equation*}

Let us construct a continued fraction
\begin{equation*}
\al \; = \; [a_{0};\; b_{0},\, a_{1},\, b_{1},\, \dots,\, a_{2k-2},\, b_{2k-2},\, a_{2k-1}]\ts
\end{equation*}
with the following properties:

\smallskip


\begin{itemize}

\item[1)]  All $a_{i},b_{j} \in [1,M]$.

\item[2)] For each $1 \le i < k$, we have $a_{2i} = 1$.

\item[3)] For each $1 \le i \le k$, we have $a_{2i-1} = h_{i}$.

\item[4)] For each $1 \le i \le k$, if $$C_{2i-1} := (q_{2i-1},p_{2i-1}) \lra [a_{0};\; b_{0},\, \dots,\, a_{2i-2}]$$
then we have $p_{2i-1} \equiv g_{i} \mod{M}$.

\item[5)] For each $1 \le i \le k$, if $$C_{2i} := (q_{2i},p_{2i}) \lra [a_{0};\; b_{0},\, \dots,\, a_{2i-1}]$$
then we have
$p_{2i} \equiv g_{i} + h_{i}e_{i} \mod{M}$.

\item[6)] For each $1 \le i \le k$, the segment $\seg{C_{2i-1}C_{2i}}$ contains exactly $h_{i}+1$ integer points. Moreover, the set $$\mathcal{A}_{i} \; := \; \{ y_{2} \; \textup{mod} \; M \; : \; (y_{1},y_{2}) \in \seg{C_{2i-1} C_{2i}}\}$$
is exactly $\AP_{i}$.

\item[7)] For each $1 \le i < k$, the segment $\seg{C_{2i}C_{2i+1}}$ contains no integer points apart from the two end points.

\end{itemize}


\smallskip

We construct $\al$ iteratively as follows.
We say an integer vector $Y = (y_{1},y_{2})$ is congruent to $z$ mod $M$, denoted $Y \equiv z \mod{M}$, if $y_{2} \equiv z \mod{M}$.
As in~\eqref{eq:conv_def}, let $C_{0} = (1,0)$ and $D_{0} = (0,1)$.

\medskip

\begin{enumerate}[leftmargin=7em]
\setlength\itemsep{0.5em}

\item[\bf Step 1:]
Let $a_{0} = g_{1}$. Then
$$C_{1} = a_{0} D_{0} + C_{0} = (1,g_{1}) \;\; \text{and} \;\; C_{1} \equiv g_{1} \mod{M}.$$

\item[\bf Step 2:]
Take $b_{0}$ so that $$D_{1} = b_{0} C_{1} + D_{0}  = (b_{0},b_{0} g_{1}) + (0,1) \equiv e_{1} \mod{M},$$ i.e.,
\begin{equation*}
b_{0}g_{1} + 1 \equiv e_{1} \mod{M}.
\end{equation*}
We can solve for $b_{0}$ mod $M$ because $\gcd(M,g_{1})=1$ from~\eqref{eq:gcd_cond}.
So there exists $b_{0} \in [1,M]$ s.t. $D_{1} \equiv e_{1} \mod{M}$.

\item[\bf Step 3:]
Take $a_{1} = h_{1}$. This implies $$C_{2} = a_{1} D_{1} + C_{1} \equiv h_{1} e_{1} + g_{1} \mod{M}.$$
By Property (G2), we also have exactly $h_{1}+1$ integer points on $\seg{C_{1} C_{2}}$.

\item[\bf Observation:]
After these steps, we have $h_{1}+1$ integer points on $\seg{C_{1} C_{2}}$. Every two such consecutive points differ by $\ovv{O D_{1}}$. Reduced mod $M$, they give:
\begin{equation*}
C_{1} \equiv g_{1},\; g_{1} + e_{1},\; \dots,\; g_{1} + h_{1}e_{1} \equiv C_{2} \mod{M}.
\end{equation*}
Thus, we have $\mathcal{A}_{1} = \AP_{1}$. \ts Conditions (1)--(7) hold so far.

\item[\bf Step 4:]
Take $b_{1}$ so that $D_{2} \equiv g_{2} - (g_{1}+h_{1}e_{1}) \mod{M}$. Since we have the recurrence
\begin{equation*}
D_{2} = b_{1} C_{2} + D_{1}  \equiv b_{1}(g_{1} + h_{1}e_{1}) + e_{1} \mod{M}
\end{equation*}
this is equivalent to solving
\begin{equation*}
b_{1}(g_{1} + h_{1}e_{1}) + e_{1} \equiv g_{2} - (g_{1}+h_{1}e_{1}) \mod{M}.
\end{equation*}
Again we can solve for $b_{1}$ mod $M$ because $\gcd(M,g_{1}+h_{1}e_{1})=1$ from~\eqref{eq:gcd_cond}.
So there exists $b_{1} \in [1,M]$ s.t. $D_{2} \equiv g_{2} - (g_{1}+h_{1}e_{1} ) \mod{M}$.

\item[\bf Step 5:]
Take $a_{2}=1$. This implies
\begin{align*}
\quad \quad C_{3} = a_{2} D_{2} + C_{2} & \equiv  g_{2} - (g_{1}+h_{1}e_{1}) + g_{1} + h_{1}e_{1} \\
& \equiv g_{2} \mod{M}.
\end{align*}
This satisfies condition (4) for $i=2$.
Now we can start encoding $\AP_{2}$ with $C_{3} \mod{M}$.

\item[\bf Observation:]
One can see that $b_{1}$ in Step 4 was appropriately set up to facilitate  Step 5.
It is conceptually easier to start with Step 5 and retrace to get the appropriate condition for $b_{1}$.
Taking $a_{2}=1$ also implies that there are no other integer points on $\seg{C_{2}C_{3}}$ apart from the two endpoints.

\item[\bf Step 6:]
Take $b_{2}$ so that $D_{3} = b_{2} C_{3} + D_{2} \equiv e_{2} \mod{M}$. This is similar to Step 2.
Again we use condition~\eqref{eq:gcd_cond}.

\item[\bf Step 7:]
Take $a_{3} = h_{2}$, which implies $$C_{4} = a_{3}D_{3} + C_{3} \equiv g_{2} + h_{2}e_{2} \mod{M}.$$
After this, we again get exactly $h_{2}+1$ integer points on $\seg{C_{3}C_{4}}$. Reduced mod $M$, they give $\mathcal{A}_{2} = \AP_{2}$.
\ts Note that conditions (1)--(7) still hold.

\item[]
The rest proceeds similarly to Steps 4--7, for $2 \le j \le k-1$:

\item[\bf Step 4$j$:]
Take $b_{2j-1}$ so that $$D_{2j} \equiv g_{j+1} - (g_{j}+h_{j}e_{j}) \mod{M}.$$

\item[\bf Step 4$j$+1:]
Take $a_{2j}=1$, which implies $$C_{2j+1} = D_{2j} + C_{2j} \equiv g_{j+1} \mod{M}.$$

\item[\bf Step 4$j$+2:]
Take $b_{2j}$ so that $D_{2j+1} \equiv e_{j+1} \mod{M}$.

\item[\bf Step 4$j$+3:]
Take $a_{2j+1}=h_{j+1}$, which implies $$C_{2j+2} \equiv g_{j+1} + h_{j+1}e_{j+1} \mod{M}.$$
The segment $\seg{C_{2j+1}C_{2j+2}}$ contains exactly $h_{j+1}+1$ integer points.

\item[\bf Observation:]
After these four steps, we get $\mathcal{A}_{j+1}=\AP_{j+1}$. \ts Conditions (1)--(7) hold throughout.

\end{enumerate}


\medskip

All modular arithmetic mod $M$ in the above procedure can be performed in polynomial time.
The last {\bf Step $4k-1$} gives:
\begin{equation*}
C_{2k} \; = \; (q_{2k},p_{2k}) \; \lra \; [a_{0};\; b_{0},\, a_{1},\, b_{1},\, \dots,\, a_{2k-1}].
\end{equation*}
All terms $a_{i}$ and $b_{j}$ are in the range $[1,M]$, so the final quotient $p_{2k}/q_{2k}$ can be computed in polynomial time using the recurrence~\eqref{eq:conv_def}.
This implies that $p_{2k}$ and $q_{2k}$ have polynomial binary lengths compared to the input $\mu,\nu,g_{i},h_{i},e_{i}$ of $\textsc{AP-COVER}$.
The curve $\mathcal{C}$ connecting $C_{0},C_{1},\dots,C_{2k}$ is shown in Figure~\ref{f:C}.

\begin{figure}[hbt]
\begin{center}
\psfrag{O}{\small$O$}
\psfrag{C0}{\small$C_0$}
\psfrag{C1}{\small$C_1$}
\psfrag{C2}{\small$C_2$}
\psfrag{C3}{\small$C_3$}
\psfrag{C4}{\small$C_4$}
\psfrag{C2k}{\small$C_{2k}$}

\epsfig{file=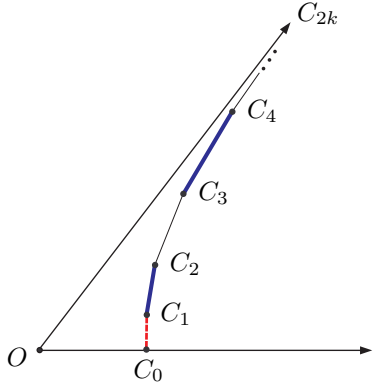, width=5.2cm}
\end{center}

\vspace{-2em}

\caption{The curve $\mathcal{C}$. }

\label{f:C}
\end{figure}

Here each bold segment $\seg{C_{2i-1}C_{2i}}$ contains $h_{i}+1$ integer points. Each thin black segment $\seg{C_{2i}C_{2i+1}}$ contains no interior integer points.
The dotted segment $\seg{C_{0}C_{1}}$ contains $g_{1}+1$ integer points,
the first $g_{1}$ of which we will not need.
Let $\mathcal{C'}$ be $\mathcal{C}$ minus the first $g_{1}$ integer points on $C_{0}C_{1}$.
For brevity, we also denote $C_{2k} = (q_{2k},p_{2k}) = (q,p)$.

\subsection{Analysis of the construction}

We define:
\begin{equation}\label{eq:A_def}
\De \; = \; \big\{\ts z \;\, : \;\,  \ex (y_{1},y_{2}) \in \mathcal{C'} \quad z \equiv y_{2} \mod{M} \ts\big\}.
\end{equation}
By condition (7), every integer point  $\y = (y_{1},y_{2}) \in \mathcal{C'}$ lies on one of the segments $\seg{C_{1}C_{2}}$, $\seg{C_{3}C_{4}}$, $\dots$, $\seg{C_{2k-1} C_{2k}}$.
Moreover, by condition (6), for $1 \le i \le k$ we have:
\begin{equation*}
\aligned
\AP_{i} \; = \; \mathcal{A}_{i} \; = \; \big\{ z \, : \, \ex \y \in \seg{C_{2i-1}C_{2i}} \;\; z \equiv y_{2} \mod{M} \ts\big\}
\endaligned
\end{equation*}
Therefore, we have:
\begin{equation*}
\AP_{1} \cup \dots \cup \AP_{k} \; = \; \mathcal{A}_{1} \cup \dots \cup \mathcal{A}_{k} \; = \; \De.
\end{equation*}
Recall that $\textsc{AP-COVER}$ asks whether:
\begin{equation*}
\ex z \in J  \;\; z \notin \AP_{1} \cup \dots \cup \AP_{k} \;\; \iff \;\; \ex z \in J \;\; z \notin \De.
\end{equation*}
By~\eqref{eq:A_def}, this is equivalent to:
\begin{equation*}
\ex z \in J  \quad \for \y \in \mathcal{C'} \quad  z  \not\equiv y_{2} \mod{M},
\end{equation*}
which can be rewritten as:
\begin{equation}\label{eq:intermediate}
\ex z \in J  \;\; \for \y \;\;  z  \not\equiv y_{2} \mod{M}  \; \lor \; \y \notin \mathcal{C'}.
\end{equation}

Next, we express the condition $\y = (y_{1},y_{2}) \in \mathcal{C'}$ in short Presburger arithmetic.
Let $\v = (p,-q)$ and $\theta$ be the cone between $\ovv{OC_{0}}$ and $\ovv{OC_{2k}}$, i.e.,
\begin{equation*}
\theta \; = \; \big\{\ts \y \in \rr^{2} \; : \; y_{2} \ge 0 \ts,\; \v\cdot\y \ge 0 \ts\big\}.
\end{equation*}
For each $\y = (y_{1},y_{2}) \in \theta$, denote by $P_{\y}$ the parallelogram with two opposite vertices $O$ and $\y$ and sides parallel to $\ovv{OC_{0}}$ and $\ovv{OC_{2k}}$ (see Figure~\ref{f:P}).
We also require that horizontal edges in $P_{\y}$ are open, i.e.,
\begin{equation}\label{eq:P_y_def}
P_{\y} = \left\{
\x \in \R^{2}  \, : \,
\begin{matrix}
\v \cdot \y \; \ge \; \v \cdot \x \; \ge \; 0\\
y_{2} \; > \; x_{2} \; > \; 0
\end{matrix}
\right\}.
\end{equation}

\vspace{-1em}
\begin{figure}[hbt]
\begin{center}
\psfrag{O}{\small$O$}
\psfrag{y}{\small$\y$}
\psfrag{Py}{\small$P_{\y}$}
\psfrag{p}{\small$p$}
\psfrag{q}{\small$q$}
\psfrag{y1}{\small$y_1$}
\psfrag{y2}{\small$y_2$}
\psfrag{C2k}{\small$C_{2k}$}

\epsfig{file=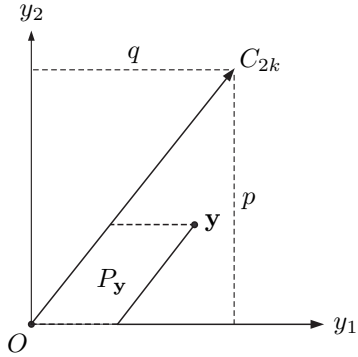, width=5cm}
\end{center}

\vspace{-1.5em}

\caption{The parallelogram $P_{\y}$. The upper and lower edges of $P_{\y}$ are open (dotted). Here we denote $C_{2k} = (q_{2k},p_{2k}) = (q,p)$.}
\label{f:P}
\end{figure}

\begin{lem}\label{lem:easy}
For $\y \in \zz^2$, we have:
\begin{equation}\label{eq:C_condition}
\y \in \mathcal{C'} \; \iff \; \v \cdot \y \ge 0  \; \land \; y_{2} \ge g_{1}  \; \land \;  P_{\y} \cap \zz^{2} = \varnothing
.
\end{equation}
\end{lem}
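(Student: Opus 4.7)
The plan is to prove both directions of the iff using the geometric properties G1--G7 of the convergents, centered on the upper envelope property G6 and the basis structure $\det(C_j, D_j) = 1$.

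For the forward direction, assume $\y \in \mathcal{C}'$. The conditions $\v \cdot \y \ge 0$ and $y_2 \ge g_1$ are essentially immediate: $\mathcal{C} \subseteq \theta$ follows from P5 (each $C_i$ has slope from the origin at most $\alpha = p/q$), and the $g_1$ removed integer points on $[C_0, C_1]$ are exactly $(1,0), (1,1), \ldots, (1, g_1-1)$, all with $y_2 < g_1$, while $y_2$ is non-decreasing along the remainder of $\mathcal{C}$ by the positive orientation of the $D_j$'s used in the recurrences~\eqref{eq:conv_def}. For the emptiness of $P_\y \cap \zz^2$, suppose toward contradiction that $\x \in P_\y \cap \zz^2$ and set $\x' := \y - \x$. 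The defining inequalities of $P_\y$ give $\x, \x' \in \theta \cap \zz^2$ with neither equal to $O$ or $\y$, so $\y = \x + \x'$ is a nontrivial decomposition into two lattice points of $\theta \setminus \{O\}$. Locate the segment $[C_j, C_{j+1}]$ of $\mathcal{C}$ containing $\y$, and expand $\x = u_1 C_j + v_1 D_j$, $\x' = u_2 C_j + v_2 D_j$ with $u_i, v_i \in \zz$, using that $\{C_j, D_j\}$ is a $\zz$-basis of $\zz^2$ (an inductive consequence of~\eqref{eq:conv_def}, since $\det(C_j, D_j) = 1$ is preserved by the recurrences). Since $\v \cdot D_j < 0$ by P7 (as $r_j/s_j > \alpha$), the constraints $\v \cdot \x, \v \cdot \x' \ge 0$ combined with $\x + \x' = C_j + s D_j \in \theta$ (where $s$ is the $D_j$-coordinate of $\y$) force, after a short sign case analysis on the $v_i$, one of $\x, \x'$ to be $O$ or $\y$, contradicting the decomposition.

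For the backward direction, suppose the three right-hand conditions hold but $\y \notin \mathcal{C}'$. Since $y_2 \ge g_1 \ge 1$ we have $\y \ne O$, and $\y \in \theta \cap \zz^2$. By G6, $\y$ is either on $\mathcal{C}$ or strictly below it. If $\y \in \mathcal{C}$, then $y_2 \ge g_1$ places $\y$ in $\mathcal{C}'$, contradicting the assumption; hence $\y$ is strictly below $\mathcal{C}$. I produce a lattice point $\x \in P_\y$ to contradict the third condition: identify the segment $[C_j, C_{j+1}]$ of $\mathcal{C}$ directly above $\y$, and choose $\x$ to be an appropriate lattice point of this segment (an endpoint $C_j$ or an interior lattice point of the form $C_j + t D_j$) such that $\x' := \y - \x$ lies in $\theta \cap \zz^2 \setminus \{O\}$. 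Verifying the $P_\y$-inequalities for this $\x$ then reduces to checking $\x, \x' \in \theta \setminus \{O, \y\}$, completing the contradiction.

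The main obstacle is the sign case analysis in the forward direction: one must ensure that no sign pattern of the $D_j$-coefficients of $\x$ and $\x'$ permits a valid decomposition into $\theta \setminus \{O\}$. This hinges critically on the strict inequality $\v \cdot D_j < 0$ from P7 and the basis property $\det(C_j, D_j) = 1$. The backward direction reduces to explicitly identifying the witness segment $[C_j, C_{j+1}]$ above $\y$, whose existence follows from the piecewise linear convexity of $\mathcal{C}$ (G4) and the strict separation of $\y$ from $\mathcal{C}$ provided by G6.
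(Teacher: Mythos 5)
Your forward direction is a legitimate alternative to the paper's: instead of observing that $P_{\y}\subset\mathcal{R}$ and invoking (G6) directly, you argue that a lattice point of $\mathcal{C}$ is irreducible in the monoid $\theta\cap\zz^{2}$, since $\x\in P_{\y}$ yields the decomposition $\y=\x+(\y-\x)$ with both summands nonzero points of $\theta$. That fact is true, but your proposed verification is under-powered: the sign analysis in the basis $\{C_j,D_j\}$ using only $\v\cdot D_j<0$ and $\det(C_j,D_j)=1$ does not suffice, because the constraint you actually need is that every nonzero lattice point of $\theta$ has $C_j$-coordinate at least $1$ in that basis --- equivalently, that the line through $C_j$ in direction $D_j$ supports $\conv\bigl((\theta\cap\zz^2)\setminus\{O\}\bigr)$. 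That is precisely (G6) again (then $u_1,u_2\ge 1$ contradicts $u_1+u_2=1$). So this half can be completed, but not by the route you describe.

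The backward direction has a genuine gap. Your entire argument rests on the claim that the segment $[C_j,C_{j+1}]$ of $\mathcal{C}$ ``directly above'' $\y$ contains a lattice point $\x$ with $\y-\x\in\theta\setminus\{O\}$; this is exactly the content of that direction, and you assert it rather than prove it. Worse, as stated it is false. Take $\al=[2;1,1]=5/2$, so $\mathcal{C}$ is $(1,0)\to(1,2)\to(2,5)$ and $g_1=2$, and take $\y=(2,2)$, which satisfies $\v\cdot\y\ge0$, $y_2\ge g_1$, and $\y\notin\mathcal{C'}$. The segment of $\mathcal{C}$ above $\y$ is $[C_1,C_2]$, whose only lattice points are $(1,2)$ and $(2,5)$; neither lies in $P_{\y}$ (the first violates $x_2<y_2$, the second has $\y-\x\notin\theta$). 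The unique lattice point of $P_{\y}$ is $(1,1)$, which sits on the \emph{other} segment $[C_0,C_1]$. So the witness cannot in general be located on the segment above $\y$, and ``verifying the $P_{\y}$-inequalities'' also needs $x_2>0$ and $(\y-\x)_2>0$, not merely $\x,\y-\x\in\theta\setminus\{O,\y\}$. The paper avoids constructing a witness altogether: it assumes $P_{\y}\cap\zz^2=\varnothing$, uses that assumption to show that the lattice points of $\mathcal{C}$ adjacent to the upper and right edges of $P_{\y}$ are consecutive on $\mathcal{C}$, and then reflects $\y$ across the midpoint of that lattice segment to produce an integer point in the forbidden region $\mathcal{R}$, contradicting (G6); it also needs the normalization $g_1\ge2$ (Remark~\ref{rem:exception}) to guarantee the second of those two points exists, an issue your sketch does not engage with. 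If you want to keep a direct construction, you must either prove that $P_{\y}$ always meets $\mathcal{C}\cap\zz^2$ (searching over \emph{all} segments, not just the one above $\y$) or switch to the paper's reflection argument.
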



\begin{proof}
First, assume $\y := (y_{1},y_{2}) \in \mathcal{C'}$.
Recall that $\mathcal{C'}$ is $\mathcal{C}$ minus the first $g_{1}$ integer points on $\seg{C_{0}C_{1}}$.
Therefore, we have $y_{2} \ge g_{1}$.
Since $\mathcal{C}$ sits inside $\theta$, we also have $\y \in \theta$, which implies $\v \cdot \y \ge 0$.
Let $\mathcal{R}$ be the concave region above $\mathcal{C}$ and below $\ovv{OC_{2k}}$.
By property (G6), $\mathcal{R}$ contains no interior integer points.
Since $\y \in \mathcal{C}$, we have $P_{\y} \subset \mathcal{R}$. Therefore, the parallelogram $P_{\y}$ in~\eqref{eq:P_y_def} contains no integer points.
We conclude that $\y$ satisfies the RHS in~\eqref{eq:C_condition}.

Conversely, assume $\y$ satisfies the RHS in~\eqref{eq:C_condition} but $\y \notin \mathcal{C'}$.
The following argument is illustrated in Figure~\ref{f:P_proof}.
First, $\v \cdot \y \ge 0  \, \land \, y_{2} \ge g_{1}$ implies $\y \in \theta$.
Also, the parallelogram $P_{\y}$ contains no integer points.
By property (G6), if $\y \notin \mathcal{C'}$, it must lie strictly below $\mathcal{C'}$.
Let $\x$ and $\x'$ be the integer points on $\mathcal{C}$ that are immediately above and below $\y$ (see Figure~\ref{f:P_proof}).
In other words, $\x \in \mathcal{C}$ is the integer point immediately above the intersection of $\mathcal{C}$ with the upper edge of $P_{\y}$, and $\x' \in \mathcal{C}$ is the integer point immediately below the intersection of $\mathcal{C}$ with the right edge of $P_{\y}$.
Since $P_{\y}$ contains no integer points, particularly those on $\mathcal{C}$, the points $\x$ and $\x'$ must be adjacent on $\mathcal{C}$, i.e., they form a segment on $\mathcal{C}$.\footnote{Note that $\x$ and $\x'$ are not necessarily two consecutive vertices $C_i$ and $C_{i+1}$ of $\mathcal{C}$. They could be two consecutive points on some segment $\seg{C_{i}C_{i+1}}$.}
Now we draw a parallelogram $D$ with two opposite vertices $\x,\x'$ and edges parallel to those of $P_{\y}$ (the dashed bold parallelogram in Figure~\ref{f:P_proof}).
It is clear that $D$ lies inside $\theta$ and also contains $\y$.
Take $\y'$ to be the reflection of $\y$ across the midpoint of $\x\x'$.
Since $\x, \x'$ and $\y$ are integer points, so is $\y'$.
We also have $\y' \in D \subset \theta$.
Note also that $\y'$ lies on the opposite side of $\mathcal{C}$  compared to $\y$.
Therefore, we have $\y' \in \mathcal{R}$, contradicting property $(G6)$.
\end{proof}

\begin{figure}[hbt]
\begin{center}
\psfrag{C}{\small $\mathcal{C}$}
\psfrag{y}{{\small $\y$}}
\psfrag{y'}{{\small $\y'$}}
\psfrag{x}{{\small $\x$}}
\psfrag{x'}{{\small $\x'$}}
\psfrag{Py}{\small $P_{\y}$}
\psfrag{O}{\small $O$}
\psfrag{D}{{\small $D$}}
\epsfig{file=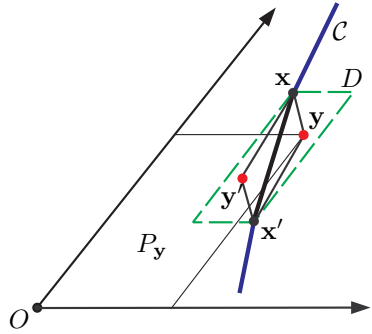, width=5.4cm}
\end{center}

\vspace{-1.5em}

\caption{$\y'$ is the reflection of $\y$ across the midpoint of $\x\x'$.}
\label{f:P_proof}
\end{figure}

\begin{rem}\label{rem:exception}
There is a subtle point about the existence of $\x'$ in the above proof.
It is clear that $\x$ exists because $\y$ lies below $\mathcal{C}$.
However, if $\y$ lies too low, the right edge $P_{\y}$ might not intersect $\mathcal{C}$.
For example, in Figure~\ref{f:exception}, we have $g_{1}=1$ and $\y$ lies on the line $y_{2}=1$.
This this case, $P_{\y}$ contains no integer points and its right edge does not intersect $\mathcal{C}$.
Thus, we have no $\x'$ and the geometric argument in Figure~\ref{f:P_proof} does not work.
However, this can be easily fixed by requiring $a_{0} = g_{1} \ge 2$, noting that $\textsc{AP-COVER}$ is invariant under a simultaneous translation of $J$ and all $\AP_{i}$.
\end{rem}

\vspace{-1em}
\begin{figure}[hbt]
\begin{center}
\psfrag{C}{$\mathcal{C}$}
\psfrag{C0}{$C_{0}$}
\psfrag{C1}{$C_{1}$}
\psfrag{C2k}{$C_{2k}$}
\psfrag{y}{$\y$}
\psfrag{Py}{$P_{\y}$}
\psfrag{O}{$O$}
\psfrag{y1}{$y_{1}$}
\psfrag{y2}{$y_{2}$}

\epsfig{file=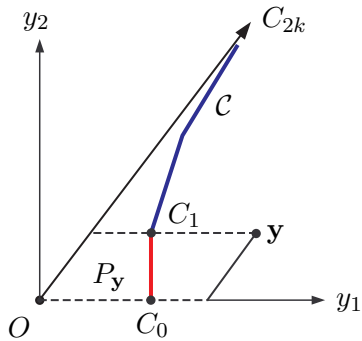, width=5.2cm}
\end{center}

\vspace{-1.5em}

\caption{Here $g_{1}=1$, $\y \notin \mathcal{C}$, and yet $P_{\y}$ contains no integer points (dotted edges are open).}
\label{f:exception}
\end{figure}

\subsection{Proof of Theorem~\ref{th:PR3hard} (decision part)}

Combining~\eqref{eq:intermediate},~\eqref{eq:P_y_def} and~\eqref{eq:C_condition}, the negation of $\textsc{AP-COVER}$ is equivalent to:
\begin{equation}\label{eq:semifinal}
\ex z \in J  \quad \for \y \quad  \Bigg[ z \not\equiv y_{2}   \mod{M}   \; \lor \;  \v \cdot \y < 0 \; \lor \; y_{2} < g_{1} \; \lor \; \ex \x
\left\{
\begin{matrix}
\v \cdot \y  \ge  \v \cdot \x  \ge  0\\
y_{2} \; > \; x_{2} \; > \; 0
\end{matrix}
\right\}
\Bigg]
.
\end{equation}
The condition $z \not\equiv y_{2} \mod{M}$ can be expressed as:
\begin{equation*}
\ex t \quad 0 < z-y_{2}-Mt < M.
\end{equation*}
This existential quantifier $\ex t$ can be absorbed into $\ex \x$ because they are connected by a disjunction.
The restricted quantifier $\ex z \in J$ with $J = [\mu,\nu]$ is just
\begin{equation*}
\ex z  \quad  \mu \le z \le \nu. 
\end{equation*}
Overall, we can rewrite~\eqref{eq:semifinal} in prenex normal form:
\begin{equation}\label{eq:final}
\aligned
\ex z  \;\; \for \y \;\; \ex \x \;\; \mu \le z \le \nu \; \land \; \Bigg[ \;&   0 < z-y_{2}-Mx_{1} < M  \; \lor \\
& \lor \; \v \cdot \y < 0 \; \lor  \; y_{2} < g_{1} \; \lor \;
\left\{
\begin{matrix}
\v \cdot \y \ge \v \cdot \x \ge 0\\
y_{2} \;>\; x_{2} \;>\; 0
\end{matrix}
\right\}
\Bigg].
\endaligned
\end{equation}

All strict inequalities with integer variables can be sharpened.
For example $y_{2} > x_{2}$ is equivalent to $y_{2} -1 \ge x_{2}$.
This final form contains $5$ variables and $10$ inequalities.

In summary, we have reduced (the negation of) $\textsc{AP-COVER}$ to~\eqref{eq:final}. This shows that~\eqref{eq:final} is $\NP$-hard, and so is $\SPAt$.
For $\NP$-completeness, by Theorem 3.8 in~\cite{G}, if $\SPAt$ is true, there must be a satisfying $\z$ with binary length bounded polynomially in the binary length of $\Phi$.
Given such a polynomial length certificate~$\z$, one can substitute it into $\SPAt$ and verify the rest of the sentence, which has the form $\for \y \; \ex \x \; \Psi(\x,\y)$.
Here $\Psi$ is again a short Presburger expression.
By Corollary~\ref{cor:PR2}, this can be checked in polynomial time.
Thus, the whole sentence $\SPAt$ is in $\NP$.
This concludes the proof of the decision part of Theorem~\ref{th:PR3hard}.
\hfill$\sq$

\bigskip

\section{Proof of theorems~\ref{cor:system1} and~\ref{cor:system2} (decision part)}\label{sec:system_proofs}

We will recast~\eqref{eq:final} into the form $\GIP$.
For the polytopes $R$ and $Q$ in $\GIP$, let $R = J = [\mu,\nu]$ and
\begin{equation}\label{eq:Q_def}
Q = \big\{\ts \y \in \rr^{2} \; : \; y_{2} \ge g_{1},\; y_{1} \le q,\; \v \cdot \y \ge 0 \ts\big\}\ts,
\end{equation}
see Figure~\ref{f:Q}.

\begin{figure}[hbt]
\begin{center}
\psfrag{O}{\small$O$}
\psfrag{y1}{\small$y_1$}
\psfrag{y2}{\small$y_2$}
\psfrag{C}{\small$\mathcal{C'}$}
\psfrag{C0}{\small$C_0$}
\psfrag{C1}{\small$C_1$}
\psfrag{C2k}{\small$C_{2k}=(q,p)$}
\psfrag{g1}{\small$g_1$}
\psfrag{Q}{\hspace{-1em} \small $Q$}
\epsfig{file=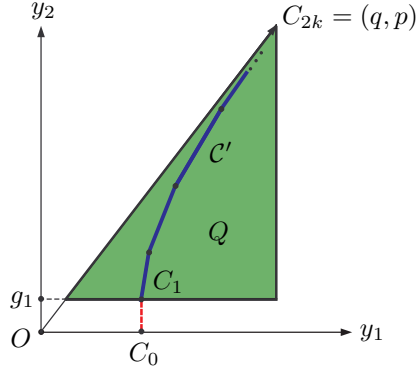, width=5.2cm}
\end{center}
\vspace{-1em}
\caption{The triangle $Q$ (shaded). }
\label{f:Q}
\end{figure}

Since $R \supset \mathcal{C'}$,~\eqref{eq:intermediate} is equivalent to:
\begin{equation*}
\ex z \in R \quad \for \y \in Q \quad z \not\equiv y_{2} \mod{M} \;\; \lor \;\; \y \notin \mathcal{C'}.
\end{equation*}
By condition~\eqref{eq:C_condition}, for $\y \in Q$, we have
\begin{equation*}
\y \notin \mathcal{C'} \quad \iff \quad \ex\x \in P_{\y}\ts.
\end{equation*}
Thus, the sentence~\eqref{eq:final} is equivalent to:
\begin{equation}\label{eq:restricted}
\aligned
\ex z \in R \quad & \for \y \in Q \quad \ex \x  \quad
\\
& 0 <\ts z-y_{2}-Mx_{1} <\ts M \quad \lor \quad \x \in P_{\y}\ts.
\endaligned
\end{equation}

The remaining step is to covert the expression
\begin{equation}\label{eq:disjunction}
1 \le z-y_{2}-Mx_{1} \le M-1 \; \lor \;
\left\{
\begin{matrix}
\v \cdot \y \ge \v \cdot \x \ge 0\\
y_{2}-1 \ge x_{2} \ge 1
\end{matrix}
\right\}
\end{equation}
into a single system.
Here we expanded $\x \in P_{\y}$ and also sharpened all inequalities.

First, observe that for $z \in R$ and $\y \in Q$, there exists $\x$ satisfying~\eqref{eq:disjunction} if and only if there exists such an $\x$ within some bounded range.
Indeed, both $R$ and $Q$ are bounded, and~\eqref{eq:disjunction} imply boundedness for $\x$.
Therefore, we can take an $N$ large enough so that
\begin{equation}\label{eq:bound}
-N \le z,\, y_{1},\, y_{2},\, x_{1},\, x_{2} \le N.
\end{equation}
For instance, $N = (M + p + q)^{3}$ suffices.

Now we convert~\eqref{eq:disjunction} into a single system.
This can be done in two slightly different ways, leading to theorems~\ref{cor:system1} and~\ref{cor:system2}.

\subsection{Proof of Theorem~\ref{cor:system1}  (decision part)}
Applying the distributive law on~\eqref{eq:disjunction}, we get an equivalent expression:
\begin{equation}\label{eq:distribute}
\left[
\begin{matrix}
1 \ts\le\ts z-y_{2} - Mx_{1} \ts\le\ts M-1 \\
\v \cdot \x \ts \le \ts \v \cdot \y
\end{matrix}
\right] \quad \land \quad
\left[
\begin{matrix}
1 \ts\le\ts z-y_{2}-Mx_{1} \ts\le\ts M-1 \\
0 \ts\le\ts \v \cdot \x
\end{matrix}
\right]
\quad \land \quad \dots
\end{equation}
Here each $\left[
\begin{smallmatrix}
a \\
b
\end{smallmatrix}
\right]
$
stands for a disjunction $a \lor b$ of two terms. In total, there are four such disjunctions.

Now we convert each of the above disjunctions into a conjunction.
WLOG, consider the first one in~\eqref{eq:distribute}.
By the bounds~\eqref{eq:bound}, it is equivalent to:
\begin{equation}\label{eq:disjunct}
\left[
\begin{matrix}
1 \ts\le\ts z-y_{2}-Mx_{1} \ts\le\ts M-1 \\
0 \ts\le\ts \v \cdot \y - \v \cdot \x \ts\le\ts 2N(p+q)
\end{matrix}
\right].
\end{equation}
Let $t_{1}=z-y_{2}-Mx_{1}$ and $t_{2}=\v \cdot \y - \v \cdot \x$.
By~\eqref{eq:bound}, we always have
\begin{equation*}
|t_{1}| \le 2N + MN, \quad |t_{2}| \le 2N(p+q).
\end{equation*}
Define two polygons in $\rr^{2}$:
$$P_{1} \;=\; \big\{\ts (t_{1},t_{2}) \in \rr^{2} \;:\;
1 \le t_{1} \le M-1, |t_{2}| \le 2N(p+q)
 \ts\big\},$$
$$P_{2} \;=\; \big\{\ts (t_{1},t_{2}) \in \rr^{2} \;:\;
|t_{1}| \le 2N + MN, \ 0 \le t_{2} \le 2N(p+1)
 \ts\big\}.$$
Then~\eqref{eq:disjunct} can be rewritten as:
\begin{equation}\label{eq:disjunct_restated}
(t_{1},t_{2}) \; \in \; P_{1} \; \cup \; P_{2} \,.
\end{equation}

Next, define:
\begin{equation*}
P_{1}' = (P_{1},0),\quad P_{2}'=(P_{2},1) \quad \text{and} \quad P=\text{conv}(P'_{1},P'_{2}).
\end{equation*}
In other words, we embed $P_{1}$ into the plane $t_{3}=0$ and $P_{2}$ into the plane $t_{3}=1$, all inside $\rr^{3}$.
As $3$-dimensional polytopes, the convex hull of $P'_{1}$ and $P'_{2}$ is another polytope $P \subset \rr^{3}$.
It is easy to see that $P$ has $6$ facets, whose equations can be found from the vertices of $P_{1}$ and $P_{2}$.
Also observe that for $(t_{1},t_{2},t_{3}) \in \zz^{3}$, we have:
\begin{equation*}
(t_{1},t_{2},t_{3}) \; \in \; P  \; \iff \ \;
\aligned
& (t_{1},t_{2}) \; \in \; P_{1}\ts, \ t_{3}=0\ts, \ \, \text{or} \\
& (t_{1},t_{2}) \; \in \; P_{2}\ts, \ t_{3}=1\ts.
\endaligned
\end{equation*}
From this, we have:
\begin{equation}\label{eq:union}
(t_{1},t_{2}) \; \in \; P_{1}  \; \cup \; P_{2}  \; \iff \; \ex t_{3}  \; : \; (t_{1},t_{2},t_{3}) \; \in \; P .
\end{equation}
Combined with~\eqref{eq:disjunct_restated}, it implies that~\eqref{eq:disjunct} is equivalent to:
\begin{equation*}
\ex t \; : \; (z-y_{2}-Mx_{1},\, py_{1}-qy_{2}-px_{1}+qx_{2},\, t) \; \in \; P.
\end{equation*}
The above condition is a linear system with $6$ equations.
Doing this for each disjunction in~\eqref{eq:distribute}, we get four new variables $\t \in \zz^{4}$ and a combined system of $24$ inequalities.
Thus, the original disjunction~\eqref{eq:disjunction} is equivalent to a system:
\begin{equation*}
\ex \t \in \zz^{4} \; : \;
 A\ts\bx \. + \. B\ts\by \. + \. Cz \. + D\ts\t    \, \le \, \ov b\ts.
\end{equation*}
The inner existential quantifiers $\ex\x \in \zz^{2}$ and $\ex\t \in \zz^{4}$ can be combined into $\ex \x \in \zz^{6}$.
Substituting everything into~\eqref{eq:restricted}, we obtain the decision part of Theorem~\ref{cor:system1}.\hfill$\sq$

\subsection{Proof of Theorem~\ref{cor:system2} (decision part)}
Another way to convert~\eqref{eq:disjunction} into a system is to directly interpret its two clauses and two separate polytopes.
The same bounds~\eqref{eq:bound} still apply.
We will need the following special case of the \emph{Upper Bound Theorem}
(see e.g.\ Theorem~8.23 and Exercise~0.9 in~\cite{Z}).

\begin{theo}[McMullen]\label{lem:upper_bound}
A polytope $P \subset \rr^{d}$ with $n$ vertices has at most
\begin{equation*}
\quad
f(d,n) \; := \;
\left(
\begin{matrix}
n - \lceil d/2 \rceil \\
n-d
\end{matrix}
\right)
+
\left(
\begin{matrix}
n-\floor{d/2}-1 \\
n-d
\end{matrix}
\right) \quad \text{facets.}
\end{equation*}
Similarly, a polytope $Q \subset \rr^{d}$ with $n$ facets has at most $f(d,n)$ vertices.
\end{theo}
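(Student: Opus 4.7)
The plan is to follow the classical shelling-based proof via the $h$-vector, in essentially the form presented in Ziegler's book. First, I would observe that by polar duality the two halves of the theorem (bounding facets in terms of vertices and vice versa) are equivalent: the polar polytope $P^{\circ}$ exchanges vertices and facets. So it suffices to prove the bound on facets. Second, a small generic perturbation of the vertex set of $P$ produces a \emph{simplicial} polytope $P'$ on the same $n$ vertices whose number of facets is at least that of $P$; this is a standard ``pulling'' argument. Hence I reduce to the case of a simplicial $d$-polytope $P$ with $n$ vertices.

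For such $P$, with $f$-vector $(f_0,\ldots,f_{d-1})$, I would introduce the $h$-vector $(h_0,\ldots,h_d)$ via
$$
\sum_{i=0}^{d} f_{i-1}\,(t-1)^{d-i} \;=\; \sum_{i=0}^{d} h_i\,t^{d-i}, \qquad f_{-1}:=1.
$$
Then $f_{d-1} = h_0+h_1+\cdots+h_d$, and the Dehn--Sommerville relations give the symmetry $h_i=h_{d-i}$. By the Bruggesser--Mani theorem, the boundary complex $\partial P$ admits a line shelling $F_1,\ldots,F_{f_{d-1}}$, and a standard combinatorial identity identifies $h_i$ with the number of shelling steps whose restriction $R_j$ has cardinality exactly~$i$.

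The heart of the argument is then the estimate
$$
h_i \;\le\; \binom{n-d+i-1}{i} \qquad \text{for } 0 \le i \le \lfloor d/2 \rfloor,
$$
proved by choosing the shelling carefully (e.g.\ induced by a generic linear functional) and noting that the restrictions of a given size form an antichain of vertex-subsets of a very restricted combinatorial type. Combining this with the Dehn--Sommerville symmetry $h_i = h_{d-i}$ and a hockey-stick summation collapses
$$
f_{d-1} \;=\; \sum_{i=0}^{d} h_i \;\le\; \binom{n-\lceil d/2\rceil}{n-d} \;+\; \binom{n-\lfloor d/2\rfloor - 1}{n-d} \;=\; f(d,n),
$$
and the bound on vertices in terms of facets follows by applying this to $P^{\circ}$.

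The main obstacle is the bound $h_i \le \binom{n-d+i-1}{i}$: naive counting of $i$-subsets of the $n$ vertices gives only the much weaker $\binom{n}{i}$, so the sharp estimate genuinely uses the geometry of simplicial polytopes. The two well-known routes to this inequality are McMullen's original line-shelling count and Stanley's algebraic argument using the Stanley--Reisner ring together with the hard Lefschetz theorem for the associated projective toric variety; either approach is substantially more delicate than the framework around it, and this is where I would expect to spend most of the effort.
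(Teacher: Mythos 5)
The paper gives no proof of this statement: it is McMullen's Upper Bound Theorem, imported as a classical result with a pointer to Ziegler's book, and your outline is exactly the standard shelling/$h$-vector proof found there, so the approach matches and the structure (polarity, pulling to the simplicial case, Dehn--Sommerville symmetry, Bruggesser--Mani shellability, the bound $h_i\le\binom{n-d+i-1}{i}$, and the hockey-stick summation) is correct. The one imprecision is your description of how that key bound is obtained: in McMullen's argument it does not come from an antichain condition on the restriction sets, but from the identity $\sum_{v}h_i(\mathrm{st}(v))=(i+1)\,h_{i+1}+(d-i)\,h_i$ combined with $h_i(\mathrm{st}(v))\le h_i(\partial P)$, which a line shelling guarantees because the facets containing a given vertex $v$ occur in shelling order as a shelling of $\mathrm{st}(v)$ with the same restriction sets.
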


\smallskip

The first polytope we consider is given by:
\begin{equation*}
\big\{(x_{1},y_{2},z) \in \rr^{3} \; : \; 1 \. \le \. z-y_{2}-Mx_{1} \. \le \. M-1,
\; -N \le x_{1},y_{2},z \le N \ts \big\}.
\end{equation*}
This is a $3$-dimensional polytope with $8$ facets.
Applying Theorem~\ref{lem:upper_bound}, we see that it has at most $12$ vertices.
To interpret it as a polytope in $z, \y$ and $\x$ we need to form its direct
product with the interval $-N \le y_{2} \le N$ also embed it in the hyperplane $x_{2}=0$.
This produces a polytope $P_{1} \subset \rr^{5}$ with $24$ vertices.

The second polytope we consider is given by:
\begin{equation*}
\big\{
(\x,\y) \in \rr^{4} \; : \; \v \cdot \y \;\ge\; \v \cdot \x \;\ge\; 0,\;
y_{2}-1 \;\ge\; x_{2} \;\ge\; 1,\; y \in Q
\big\}.
\end{equation*}
As a $4$-dimensional polytope it has only $8$ vertices.
These $8$ vertices correspond to the cases when $\y$ lies at one of the three vertices of~$Q$.
Two of these vertices give two degenerate parallelograms $P_{\y}$, each of which is a segment with $2$ vertices.
The lower right vertex of $Q$ gives a non-degenerate parallelogram $P_{\y}$ with $4$ vertices.
To interpret this as a $5$-dimensional polytope in $z,\y$ and $\x$, we need to form its direct product with the polytope $R = [\mu,\nu]$ for $z$.
This results in a polytope $P_{2} \subset \rr^{5}$ with $16$ vertices.

Altogether, we have two polytopes $P_{1},P_{2} \subset \rr^{5}$ with $40$ vertices in total.
We reapply the ``lifting'' trick in~\eqref{eq:union} to produce another polytope $P \subset \rr^{6}$ with $40$ vertices so that:
\begin{equation*}
(z,\y,\x) \; \in \; P_{1}  \; \cup \; P_{2}  \; \iff \;  \ex t \; : \; (z,\y,\x,t) \; \in \; P.
\end{equation*}
By Theorem~\ref{lem:upper_bound}, the resulting polytope $P$ has at most
$$f(6,40) \, = \, \binom{37}{34} \. + \. \binom{36}{34} \, = \, 8400
$$
facets, which can all be found in polynomial time from the vertices.
Therefore, the disjunction~\eqref{eq:disjunction} is equivalent to a system:
\begin{equation*}
\ex t \; : \;  A\ts\bx \. + \. B\ts\by \. + \. Cz \. + D\ts t    \, \le \, \ov b\ts
\end{equation*}
with at most $8400$ inequalities.
The existential quantifiers $\ex t$ and $\ex \x \in \zz^{2}$ can be combined into $\ex \x \in \zz^{3}$.
Substituting all into~\eqref{eq:restricted}, we obtain the decision part of
Theorem~\ref{cor:system2}. \hfill$\sq$

\bigskip

\section{Proof of theorems~\ref{th:PR3hard}, \ref{cor:system1} and~\ref{cor:system2} (counting part)}

\nin
Notice that the above reduction from $\textsc{AP-COVER}$ to~\eqref{eq:final} is parsimonious,
i.e., $z$ lies in $J \cpl (\AP_{1} \cup \dots \cup \AP_{k})$ if and only if $\mu \le z \le \nu$ and
\begin{equation}\label{eq:count_final}
\for \y \;\; \ex \x \;\;  \Bigg[ 0 < z-y_{2}-Mx_{1} < M   \;  \lor \; \v \cdot \y < 0  \; \lor \; y_{2} < g_{1} \;  \lor
\left\{
\begin{matrix}
\v \cdot \y \ge \v \cdot \x \ge 0\\
y_{2} > x_{2} > 0
\end{matrix}
\right\}
\Bigg].
\end{equation}

At the same time, the reduction from $\textsc{3SAT}$ to $\textsc{AP-COVER}$ given in \S\ref{sec:AP-COVER} is also parsimonious, i.e., every satisfying assignment $\u$ for~\eqref{eq:3SAT} corresponds to a unique $z \in J$ not covered by the arithmetic progressions and vice versa.
This is due the uniqueness part of the Chinese Remainder Theorem used in~\eqref{eq:moduli}.
Since $\textsc{\#3SAT}$ is $\sharpP$-complete (see e.g.~\cite{AB,MM,Pap}), so is counting the number of $z$ satisfying~\eqref{eq:count_final}.
This proves the second part of Theorem~\ref{th:PR3hard}.

The counting parts of theorems~\ref{cor:system1} and~\ref{cor:system2} can be proved with a similar argument to Section~\ref{sec:system_proofs}.\hfill$\sq$



\bigskip

\section{Proof of Theorem~\ref{th:PH}}


Consider the following $m$-generalization of the problem $\textsc{AP-COVER}$:

\problemdef
{$m$-AP-COVER}
{
The following elements:

$\bu$ $m$ intervals $J_{1} = [\mu_{1},\nu_{1}]$ $,\dots,$ $J_{m} = [\mu_{m},\nu_{m}]$,

$\bu$ $k_{1}$ triples $(g_{1i},\ts h_{1i},\ts e_{1i})$, with $1 \le i \le k_{1}$,

$\quad\ldots$

$\bu$ $k_{m}$ triples $(g_{mi},\ts h_{mi},\ts e_{mi})$, with $1 \le i \le k_{m}$,

$\bu$ $m$ integers $\tau_{1},\, \dots,\, \tau_{m} \in \zz$.
}
{
\begin{equation*}
\hspace{-3.2em}
\aligned
Q_{1} (z_{1} \in  J_{1}  \cpl \Delta_{1}) \;\; & \dots \;\;  Q_{m-1} (z_{m-1} \in J_{m-1} \cpl \Delta_{m-1}) \\
& \dots \;\; Q_{m} (z_{m} \in J_{m}) \;\; : \;\; \tau_{1}z_{1} + \ldots + \tau_{m}z_{m} \notin \Delta_{m}.
\endaligned
\end{equation*}
Here $Q_{1},\dots,Q_{m} \in \{\for,\ex\}$ are $m$ alternating quantifiers with $Q_{m} = \ex$. The sets $\De_{1},\dots,\De_{m}$ are defined as:
\begin{equation*}
\De_{t} = \AP_{t1} \cup \dots \cup \AP_{tk_{t}}, \; 1 \le t \le m
\end{equation*}
where
\begin{equation*}
\AP_{ti} = \AP(g_{ti},h_{ti},e_{ti}), \; 1 \le i \le k_{t}.
\end{equation*}
}


Using Theorem~\ref{th:m-AP-COVER}, we prove Theorem~\ref{th:PH} by reducing $\textsc{$m$-AP-COVER}$ to short Presburger arithmetic.
Theorem~\ref{th:PR3hard} is the special case when $m=1$ ($\SigmaP_{1} \equiv \NP$).
For simplicity, we show the reduction for the case $m=2$.
The same argument works for $m>2$.

Consider $\textsc{$2$-AP-COVER}$ in~\eqref{eq:2-AP-COVER}, which is $\PiP_{2}$-complete.
We can rewrite it as:
\begin{equation}\label{eq:2-AP-COVER-restated}
\for z_{2} \in J_{2} \quad \bigl[ \ts  z_{2}  \in \De_{2}  \;\; \lor \;\; \ex z_{1} \in J_{1} \;\; \tau_{1}z_{1} + \tau_{2}z_{2} \notin \De_{1}  \ts \bigr].
\end{equation}
Replacing $z$ with $\ts\tau_{1}z_{1}+\tau_{2}z_{2}\ts$ in~\eqref{eq:count_final}, we can express the condition $\tau_{1}z_{1} + \tau_{2}z_{2} \notin \De_{1}$ by a short formula  $\for \y \; \ex \x \; \Phi_{1}(\x,\y,\tau_{1}z_{1}+\tau_{2}z_{2})$ with $4$ extra variables $\x,\y \in \zz^{2}$ and $8$ linear inequalities.
Similarly, the condition $z_{2} \in \De_{2}$ can be expressed as $\ex \w \; \for \t \; \Phi_{2}(\t,\w,z_{2})$ with another $4$ variables $\w,\t \in \zz^{2}$ and also $8$ inequalities.

Overall,~\eqref{eq:2-AP-COVER-restated} is equivalent to:
\begin{equation*}
\for z_{2} \in J_{2} \quad \biggl[ \ts  \ex \w \;\; \for \t \;\; \Phi_{2}(\t,\w,z_{2})  \;\; \lor \;\; \ex z_{1} \in J_{1} \;\; \for \y \;\; \ex \x \;\; \Phi_{1}(\x,\y,\tau_{1}z_{1}+\tau_{2}z_{2})  \ts \biggr]\,.
\end{equation*}
Each of the restricted quantifiers $\for z_{2} \in J_{2}$ and $\ex z_{1} \in J_{1}$ contributes $2$ more inequalities.
Note that the two quantifier groups $\ex \w \; \for \t$ and $\ex z_{1} \; \for \y \; \ex \x$ can be merged through the disjunction into $\ex \w \; \for \y' \; \ex \x$.
This results in new variables $\w \in \zz^{2}$, $\y' = (\t,\y) \in \zz^{4}$ and $\x \in \zz^{2}$.
The final sentence takes the form
\begin{equation*}
\for z_{2} \quad \ex \w \quad \for \y' \quad \ex \x \quad \Phi(\x,\y',\w,z_{2})
\end{equation*}
with $20$ inequalities and $9$ variables ($z_{1}$ has been absorbed into $\w$).\hfill$\sq$

\bigskip

\section{Bilevel optimization and Pareto optima}\label{sec:optimization}





\subsection{Proof of Theorem~\ref{th:minmax}}\label{ss:minmax}
First, we characterize the convex chains $\mathcal{C}$ and $\mathcal{D}$ from Figure~\ref{f:continued_fraction} using a quadratic function:

\begin{lem}\label{lem:quad_cond}
	Let $\al = p/q \in \Q_{+}$. If $\u,\v \in \zz^{2}$ satisfy $\frac{u_{2}}{u_{1}} < \al < \frac{v_{2}}{v_{1}}$ and $v_{2}u_{1} - v_{1}u_{2} = 1$ then both $\frac{u_{2}}{u_{1}}$ and $\frac{v_{2}}{v_{1}}$ are ``weak'' convergents of $\al$, i.e., $\u \in \mathcal{C}$ and $\v \in \mathcal{D}$.
\end{lem}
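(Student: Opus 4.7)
The plan is to deduce the lemma from the parallelogram characterization of $\mathcal{C}$ that underlies Lemma~\ref{lem:easy} and property (G6): a non-zero integer point $y \in \theta$ lies on $\mathcal{C}$ if and only if the open parallelogram $P_y$ from~\eqref{eq:P_y_def} contains no lattice points. (Lemma~\ref{lem:easy} states this with the additional restriction $y_2 \ge g_1$ that separates $\mathcal{C}$ from $\mathcal{C'}$; dropping it is immediate from the proof given there.) I will argue $u \in \mathcal{C}$ in detail; the claim $v \in \mathcal{D}$ follows by the mirror argument after reflecting the figure across the ray $\overrightarrow{OC_{k+1}}$ and swapping the horizontal and vertical axes.

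First, since $\alpha>0$ and $v_2/v_1>\alpha$, one may assume $v_1, v_2 \ge 1$; then the relation $u_1 v_2 - v_1 u_2 = 1$ forces $u_1 \ge 1$ and $u_2 = (u_1 v_2 - 1)/v_1 \ge 0$, so $u \in \theta$. It remains to prove $P_u \cap \zz^2 = \varnothing$. The crucial observation is that $\det(u\,|\,v) = 1$, so $\{u,v\}$ is a $\zz$-basis of $\zz^2$, and every $w \in \zz^2$ has a unique representation $w = au + bv$ with $a,b \in \zz$. Writing $\nu := (p,-q)$ and the shorthand $\beta := \nu\cdot u > 0$, $\gamma := -\nu\cdot v > 0$, the inequalities defining $P_u$ translate into the integer constraints
$$
0 \,\le\, a\beta - b\gamma \,\le\, \beta \quad \text{and} \quad 0 \,<\, a u_2 + b v_2 \,<\, u_2.
$$
A short case analysis on the signs of $(a,b)$ rules out every possibility: pairs with $a,b \ge 1$ fail the second chain since $au_2 + bv_2 \ge u_2 + v_2 > u_2$; pairs with $a \ge 1, b \le -1$ fail the first chain since $a\beta - b\gamma \ge \beta + \gamma > \beta$; pairs with $a \le -1, b \ge 0$ or $a = 0, b \ge 1$ fail the lower bound of the first chain; pairs with $a \le -1, b \le -1$ or $a = 0, b \le -1$ fail the second chain's lower bound; $(a,b) = (a,0)$ with $a \ge 1$ fails the second chain's upper bound; and $(a,b) = (0,0)$ gives $w = O \notin P_u$. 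Hence $P_u \cap \zz^2 = \varnothing$ and $u \in \mathcal{C}$.

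The statement $v \in \mathcal{D}$ is proved symmetrically: the analogous parallelogram attached to $\mathcal{D}$ (obtained by reflecting $P_u$ across $\overrightarrow{OC_{k+1}}$ and interchanging horizontal and vertical sides) is shown to contain no lattice points by the same $\{u,v\}$-basis expansion and a mirror case analysis. The main (and only) obstacle is ensuring the case analysis is complete and uniform; minor boundary configurations $u_2 = 0$ (i.e., $u = C_0$) or $v_1 = 0$ (i.e., $v = D_0$) are trivial since the corresponding parallelogram degenerates to a segment and the points lie on $\mathcal{C}$ or $\mathcal{D}$ by definition.
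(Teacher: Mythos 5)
Your central computation is correct and is a genuinely different route to the key estimate: the paper argues by contradiction, producing from $\u \notin \mathcal{C}$ a lattice point $\u' \in P_{\u}$ and then contradicting integrality of $v_2u_1'-v_1u_2'$, whereas you verify directly, in the unimodular basis $\{\u,\v\}$, that $P_{\u}$ contains no lattice point at all; the case analysis is complete and fine (given positivity of $\u,\v$). The gap is in the very last inference, ``hence $P_{\u}\cap\zz^2=\varnothing$ and $\u\in\mathcal{C}$''. The unrestricted biconditional you invoke --- a nonzero lattice point $\y\in\theta$ lies on $\mathcal{C}$ if and only if $P_{\y}$ is lattice-free --- is false, and your parenthetical claim that the restriction $y_2\ge g_1$ in Lemma~\ref{lem:easy} can be dropped ``immediately from the proof given there'' is exactly what Remark~\ref{rem:exception} warns against: any lattice point $\y=(y_1,1)$ with $y_1\ge 2$ lies in $\theta$, has $P_{\y}$ vacuously lattice-free (there is no integer $x_2$ with $0<x_2<1$), and yet is not on $\mathcal{C}$, since all points of $\mathcal{C}$ with $y_2<a_0$ have $y_1=1$. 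The restriction $y_2\ge g_1$ (together with the convention $g_1=a_0\ge 2$, which is imposed on the paper's construction and is not a property of an arbitrary $\al\in\Q_+$) is precisely what guarantees the existence of the point $\x'$ in the converse direction of Lemma~\ref{lem:easy}. So your argument proves $\u\in\mathcal{C}$ only when $u_2$ is large enough for that converse to apply; for low $\u$ the emptiness of $P_{\u}$ carries no information, and you must use the hypothesis on $\v$ a second time --- e.g.\ by showing that when $u_2<a_0$ no $\v$ with $v_2/v_1>\al$ and $v_2u_1-v_1u_2=1$ can exist unless $u_1=1$, which places $\u$ on the segment $C_0C_1\subset\mathcal{C}$. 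This missing case is exactly the delicate regime the paper flags, so it cannot be waved away.

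Two smaller points. First, ``one may assume $v_1,v_2\ge 1$'' is not a harmless normalization: replacing $\v$ by $-\v$ flips the sign of $v_2u_1-v_1u_2$, and there are genuinely negative pairs satisfying the stated hypotheses for which the conclusion fails (e.g.\ $\u=(-1,-1)$, $\v=(-1,-2)$, $\al=3/2$ gives $v_2u_1-v_1u_2=1$ and $u_2/u_1=1<\al<2=v_2/v_1$), so positivity is an implicit standing assumption of the lemma that should be stated, not derived; the paper makes the same tacit assumption, so this is minor, but your phrasing presents it as a deduction. Second, the claim $\v\in\mathcal{D}$ is dispatched ``by symmetry'' just as in the paper, which is acceptable, but note that the mirrored argument has its own boundary regime (small $v_1$) subject to the same caveat as above.
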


\begin{proof}
	Assume $\u \notin \mathcal{C}$,
	then $\u = (u_{1},u_{2})$ lies stricly below $\mathcal{C}$.
	By the argument from Lemma~\ref{lem:easy}, the parallelogram $P_{\u}$ contains another point $\u' = (u'_{1},u'_{2}) \in \zz^{2}$ with $\frac{u'_{2}}{u'_{1}} < \al$.
	Draw a line $\ell$ parallel to $\vec{\v}$ and passing through $\u$.
	Since $\frac{v_{2}}{v_{1}} > \al$, $P_{\u}$ lies completely to the left of $\ell$ (See Figure~\ref{f:u_v}).
	From this, we conclude that $1 = v_{2}u_{1} - v_{1}u_{2} > v_{2} u'_{1} - v_{1} u'_{2} > 0$.
	In other words, the triangle $O\u\v$ has larger area than that of  $O\u'\v$.
	This is impossible, because $v_{2} u'_{1} - v_{1} u'_{2} \in \zz$.
	Therefore, we must have $\u \in \mathcal{C}$.
	By the same argument, we have $\v \in \mathcal{D}$.
\end{proof}

\vspace{-1em}
\begin{figure}[hbt]
	\begin{center}
		\psfrag{C}{$\mathcal{C}$}
		\psfrag{x}{{\small $y_{1}$}}
		\psfrag{y}{{\small $y_{2}$}}
		\psfrag{O}{{\small $O$}}
		\psfrag{y=ax}{{\small $y_{2} = \al y_{1}$}}
		\psfrag{u}{{\small $\u$}}
		\psfrag{v}{{\small $\v$}}
		\psfrag{u'}{{\small $\u'$}}
		\psfrag{Pu}{{\small $P_{\u}$}}
		\psfrag{l}{{\small $\ell$}}
		\epsfig{file=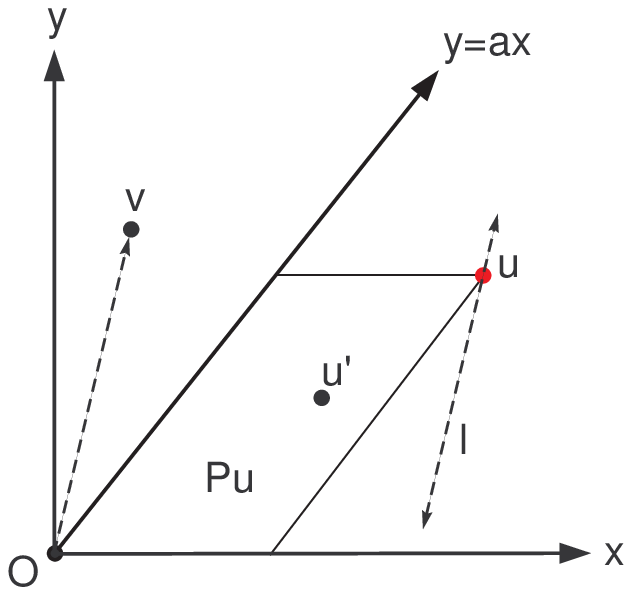, width=5.8cm}
	\end{center}
	
	\vspace{-2em}
	
	\caption{$\u$ and $\v$.}
	\label{f:u_v}
\end{figure}

Conversely, for any weak convergent $\u \in \mathcal{C}$, we can find $\v \in \mathcal{D}$ with $v_{2}u_{1} - v_{1} u_{2} = 1$.
This comes from the fact that any two consecutive convegents $\frac{p_{i}}{q_{i}}$ and $\frac{p_{i+1}}{q_{i+1}}$ of $\alpha$ satisfy $p_{i+1}q_{i} - p_{i}q_{i+1} = (-1)^{i}$.

\begin{proof}[Proof of Theorem~\ref{th:minmax}]
	We use the same reduction from \textsc{AP-COVER} as in Sections~\ref{sec:main_proof} and~\ref{sec:system_proofs}.
	With the same rational number $\al = p/q$, let
	\begin{equation*}
	Q = \big\{\ts (u_{1},u_{2}) \in \rr^{2} \; : \; u_{2} \ge g_{1},\; u_{1} \le q,\; pu_{1}-qu_{2} \ge 0 \ts\big\}\ts,
	\end{equation*}
	and
	\begin{equation*}
	P = \big\{\ts (v_{1},v_{2}) \in \rr^{2} \; : \; v_{2} \le p-1,\; v_{1} \ge 0,\; pv_{1}-qv_{2} \le 0 \ts\big\}\ts.
	\end{equation*}
	
	\begin{figure}[hbt]
		\begin{center}
			\psfrag{O}{\small $O$}
			\psfrag{y1}{\small $y_1$}
			\psfrag{y2}{\small $y_2$}
			\psfrag{C'}{\small $\mathcal{C'}$}
			\psfrag{(p,q)}{\small $(p,q)$}
			\psfrag{P}{\small $P$}
			\psfrag{Q}{\small $Q$}
			\epsfig{file=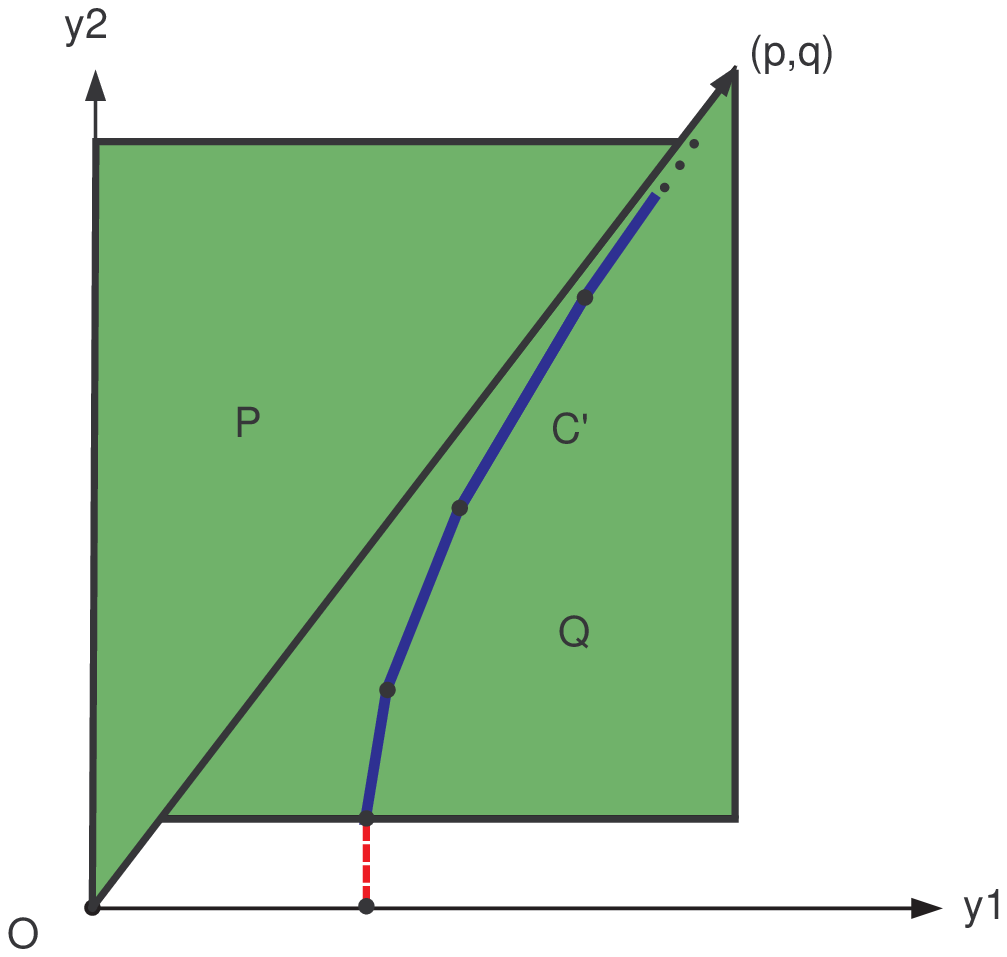, width=5.0cm}

		\end{center}
		
		\vspace{-2em}
		
		\caption{$P$ and $Q$.}
	\end{figure}

	Recall from~\eqref{eq:intermediate} that the $\NP$-complete problem \textsc{AP-COVER} asks if there exists some $z \in J \subset [0,M]$ for which no $\y \in \mathcal{C'}$ satisfies $z \equiv y_{2} \mod M$.
	Here $\mathcal{C'}$ is the part of the convex chain $\mathcal{C}$ lying inside $Q$.
	Now let $\w = (\u,\v,t)$, $W = Q \times P \times [0,T]$  and
	\begin{equation*}
	h(z,\w) \quad = \quad K(v_{2}u_{1} - v_{1}u_{2} - 1) \quad + \quad (u_{2} - z - tM)^{2}.
	\end{equation*}
	Here $T$ and $K$ are two appropriately chosen constants.
	Specifically, let $T = p/M$ so that if $z \equiv u_{2} \mod M$ then there always exists $t \in [0,T]$ with $t = \frac{u_{2}-z}{M}$.
	For $K$, we pick it sufficiently large so that $K \gg (u_{2}-z-tM)^{2}$ for every $\u \in Q$, $z \in J$ and $t \in [0,T]$.
	Clearly $K = (2TM + p)^{3}$ suffices.

	With $\u \in Q \cap \zz^{2}$ and $\v \in P \cap \zz^{2}$, we have $v_{2}u_{1} - v_{1}u_{2} \ge 1$.
	Furthermore, by Lemma~\ref{lem:quad_cond}, equality happens if and only if $\u \in \mathcal{C'}$ and $\v \in \mathcal{D}$.
	For a fixed $z \in J$ consider the $\w \in W$ that  minimizes $h(z,\w)$.
	Since $K \gg (z - tM - u_{2})^{2}$, the first term in $h$ always dominate the second one.
	So we must have $v_{2}u_{1}-v_{1}u_{2} = 1$ when $h$ is minimized, which implies $\u \in \mathcal{C'}$.
	Furthermore, among all $\y \in \mathcal{C'}$, $\u$ must be the one for which $u_{2} \text{ mod } M$ is closest to $z$, so that the second term in $h$ is minimized.
	Thus,
	\begin{equation*}
	\min_{\w \in W \cap \zz^{5}} \quad h(z,\w)  \quad \ge \quad  0,
	\end{equation*}
	and equality holds if and only if there is some $\y \in \mathcal{C'}$ with $z \equiv y_{2} \mod M$.
	Therefore,
	\begin{equation*}
	\max_{z \in J \cap \zz} \quad \min_{\w \in W \cap \zz^{5}} \quad h(z,\w)  \quad > \quad  0
	\end{equation*}
	if and only if there exists some $z \in J$ for which no $\y \in \mathcal{C'}$ satisfies $z \equiv y_{2} \mod M$.
	We conclude that computing~\eqref{eq:minmax} is $\NP$-hard, as it implies \textsc{AP-COVER}.
\end{proof}

\subsection{Proof of Theorem~\ref{th:Pareto}}\label{ss:Pareto}

First recall the definition of Pareto optima defined in Section~\ref{ss:application}.
To summarize Section~\ref{ss:minmax}, we showed that computing
\begin{equation}\label{eq:minmax_restated}
	\max_{z \in J \cap \zz} \quad \min_{\w \in W \cap \zz^{5}} \quad h(z,\w)
\end{equation}
is $\NP$-hard for $I \subset \rr^{1}$ an interval, $W \subset \rr^{5}$ a polytope with $18$ facets and $h : \rr^{6} \to \rr$ a quadratic function.
Let $Q = I \times W \subset \rr^{6}$, which has $38$ facets.
For $\x = (z,\w) \in Q \cap \zz^{6}$, let
\begin{equation*}
f_{1}(\x) = z, \quad f_{2}(\x) = -z \quad \text{and} \quad f_{3}(\x) = h(z,\w).
\end{equation*}

Consider the set of Pareto minima of $(f_{1},f_{2},f_{3})$ on~$Q$.
For convenience, we denote an outcome vector $\y = \big(f_{1}(\x), f_{2}(\x), f_{3}(\x)\big)$ by $\y = f(\x)$.
Consider two points $\x=(z,\w)$ and $\x'=(z,\w')$ in $Q \cap \zz^{6}$. If $h(z,\w) < h(z,\w')$ then $f_{1}(\x) = f_{1}(\x')$, $f_{2}(\x) = f_{2}(\x')$, and $f_{3}(\x) < f_{3}(\x')$. Then $\y' = f(\x')$ is not a Pareto minimum in this case.
Therefore, all Pareto minima must be of the form $\y = f(\x)$, where $\x = (z,\w_{\min})$ with $h(z,\w_{\min}) = \min_{\w \in W \cap \zz^{5}} h(z,\w)$.
Furthermore, if $\x = (z, \w_{\min})$ and $\x' = (z', \w'_{\min})$ are two such points with $z \ne z'$, then the outcome vectors $\y = f(\x)$ and $\y' = f(\x')$ are incomparable, simply because either $f_{1}(\x) < f_{1}(\x')$ and $f_{2}(\x) > f_{2}(\x')$, or the other way around.

We conclude that the set Pareto minima of $(f_{1},f_{2},f_{3})$ on $Q$ is given as:
\begin{equation*}
\mathcal{P} \, = \, \Big\{\ts\y = \big(z,\ts -z,\ts h(z,\w_{\min})\big) \; : \; z \in J \cap \zz,\; h(z,\w_{\min}) = \min_{\w \in W \cap \zz^{5}} h(z,\w) \ts\Big\}.
\end{equation*}
For $\y \in \rr^{3}$, let $g(\y) = -y_{3}$. Then minimizing $g(\y)$ over $\y \in \mathcal{P}$ is the same as
computing the negated value of~\eqref{eq:minmax_restated}.  This proves the first part of Theorem~\ref{th:Pareto}.

To show the hardness of approximating $\min_{\y \in \mathcal{P}}{g(\y)}$ within a multiplicative factor of~$1/2$, recall from Section~\ref{ss:minmax} that the value of~\eqref{eq:minmax_restated} determines the~\textsc{AP-COVER}.
To be precise,~\eqref{eq:minmax_restated} is equal to the largest squared distance of an integer $z \in J$ from the union $\AP_{1} \cup \dots \cup \AP_{k}$, which is $0$ if and only if $J \cap \Z$ is entirely covered by these $\AP$s.

Recall the part of the proof of Theorem~\ref{th:SM}, where we reduce $\textsc{3SAT}$ to $\textsc{AP-COVER}$.
There, we pick the first $\ell$ primes $p_{1}=2,\ts p_{2},\ts \dots,\ts p_{\ell}$.
The reduction would work verbatim if we picked $p_{2}=3,\ts \dots,\ts p_{\ell+1}$ instead.
The advantage of this small change is that now we can exclude the arithmetic progression $z \equiv 0 \mod 2$
from~$J$. In other words, we require $z \equiv 1 \mod 2$  and the Chinese Remainder Theorem still works.
Then the final union $\AP_{1} \cup \dots \cup \AP_{k}$ which we exclude from $J$ must contain all even numbers.
This implies that the largest squared distance of an integer $z \in J$ to $\AP_{1} \cup \dots \cup \AP_{k}$
is at most~$1$.
Therefore, the value of~\eqref{eq:minmax_restated} is either $1$ or~$0$. So getting a $1/2$-approximation
is equivalent to deciding \textsc{AP-COVER}, and thus $\NP$-hard.

\bigskip

\section{Covering with arithmetic progressions}\label{sec:AP}

\subsection{$\NP$-completeness of $\textsc{AP-COVER}$}\label{sec:AP-COVER}
Recall the following problem from $\S$\ref{sec:convert_covering}.

\problemdef{AP-COVER}
{An interval $J = [\mu,\nu] \subset \zz$ and $k$ triples $(g_{i},h_{i},e_{i})$ for $i=1,\dots,k$.}
{Is there $z \in I$ such that $z \notin (\AP_{1} \cup \dots \cup \AP_{k})$, where $\AP_{i} = \AP(g_{i},h_{i},e_{i})$?}

In this section, we reproduce (in a somewhat different language) the original proof
from~\cite{MS}, see also Remark~\ref{r:AP-GJ} below.  The reduction in the proof
will later be extended to work with more quantifiers.

\begin{theo}[Stockmeyer and Meyer]  \label{th:SM}
	$\textsc{AP-COVER}$ is $\NP$-complete.
\end{theo}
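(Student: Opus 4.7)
The plan is to reduce from \textsc{3SAT} to \textsc{AP-COVER} using the Chinese Remainder Theorem to encode Boolean assignments as integers. Given a \textsc{3SAT} instance with variables $u_1,\dots,u_\ell$ and clauses $C_1,\dots,C_s$, I would let $p_1=2,\, p_2=3,\dots,p_\ell$ be the first $\ell$ primes, set $M = p_1 p_2 \cdots p_\ell$, and take $J=[0,M-1]$. By the prime number theorem, $\log M = O(\ell \log \ell)$, so $J$ has polynomial bit length. I identify each Boolean assignment $\u=(u_1,\dots,u_\ell)\in\{0,1\}^\ell$ with the unique integer $z_\u \in J$ satisfying $z_\u \equiv u_i \pmod{p_i}$ for every $i$, where residue $0$ encodes \textsc{false} and residue $1$ encodes \textsc{true}.

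I would then construct the arithmetic progressions in two groups. The first group rules out integers that do not correspond to any Boolean assignment: for each $i \ge 2$ and each residue $r \in \{2,\dots,p_i-1\}$, include the progression $\{z \in J : z \equiv r \pmod{p_i}\}$, which can be written as $\AP(r,\lfloor(M-1-r)/p_i\rfloor,p_i)$. Any $z \in J$ that avoids every AP of this first group must satisfy $z \bmod p_i \in \{0,1\}$ for all $i$, so $z = z_\u$ for a unique assignment $\u$. The second group encodes clause violations: for each clause $C_j$ on variables $u_{i_1}, u_{i_2}, u_{i_3}$, the unique assignment of these three variables that falsifies $C_j$ determines residues $r_{j,1}, r_{j,2}, r_{j,3} \in \{0,1\}$ modulo $p_{i_1}, p_{i_2}, p_{i_3}$, and by CRT the simultaneous solutions in $J$ form a single progression with common difference $p_{i_1} p_{i_2} p_{i_3}$ and some starting value $g_{C_j} \in [0, p_{i_1} p_{i_2} p_{i_3})$, which I include as the AP associated with~$C_j$.

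With this construction, an integer $z \in J$ avoids the entire union of progressions iff $z = z_\u$ for some assignment $\u$ satisfying every clause, so the constructed \textsc{AP-COVER} instance answers \textsc{yes} iff the original \textsc{3SAT} is satisfiable. The number of AP's is $O\bigl(\sum_i p_i + s\bigr)$, each specified by three polynomial-length integers, and all modular arithmetic runs in polynomial time, so the reduction is polynomial. Membership of \textsc{AP-COVER} in \NP\ is immediate: a witness $z \in J$ has polynomial bit length, and $z \notin \AP(g_i, h_i, e_i)$ is a single modular test.

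The only substantive ingredient is the size estimate $p_\ell = O(\ell \log \ell)$ from the prime number theorem (or, more elementarily, from iterating Bertrand's postulate); the rest is routine CRT bookkeeping, together with the observation that clauses with repeated variables simply produce progressions with smaller common difference. The reduction is moreover \emph{parsimonious}: by the uniqueness part of CRT, each satisfying assignment $\u$ corresponds to exactly one uncovered integer $z_\u$, which is precisely why the same construction transfers \sharpP-hardness of \textsc{\#3SAT} to counting uncovered $z$'s --- a fact already invoked in the counting parts of theorems~\ref{th:PR3hard},~\ref{cor:system1} and~\ref{cor:system2}.
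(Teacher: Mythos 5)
Your reduction is correct and is essentially identical to the paper's own proof: the same encoding of assignments via the first $\ell$ primes and the Chinese Remainder Theorem, the same two groups of progressions (one forcing residues in $\{0,1\}$, one per falsified clause), and the same observation that the correspondence is parsimonious. No gaps.
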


\begin{proof}
	We reduce $\textsc{3SAT}$ to $\textsc{AP-COVER}$.
	Consider a $3$-CNF Boolean expression:
	\begin{equation}\label{eq:3SAT}
	\Psi(\u) \; = \; \bigwedge_{i=1}^{n} C_{i}(\u),
	\end{equation}
	where $\u = u_{1} \dots \ts u_{\ell} \in \{\text{true},\text{false}\}^{\ell}$
	are Boolean variables, and each clause $C_{i}(\u)$ is a disjunction of three literals from the set
	\begin{equation*}
	\{u_{j},\, \lnot u_{j} \; : \; 1 \le j \le \ell\}.
	\end{equation*}
	
	Let $p_{1},\dots,p_{\ell}$ be the first $\ell$ primes. We have $p_{\ell} = O(\ell \log \ell)$ by the Prime Number Theorem.
	So $p_{1},\dots,p_{\ell}$ can be found in time $\polyin{\ell}$.
	We restrict $z$ to the interval $J = [0,\ts p)$, where $p = p_{1} \cdots p_{\ell}$.
	For each assignment of $\u = u_{1} \dots \ts u_{\ell} \in \{\text{true},\text{false}\}^{\ell}$, we shall associate a unique integer $z \in J$ that satisfies:
	\begin{equation}\label{eq:moduli}
	u_{j} = \text{true} \; \iff \; z \equiv 1 \mod{p_{j}} \quad ; \quad
	u_{j} = \text{false} \; \iff \; z \equiv 0 \mod{p_{j}}\ts.
	\end{equation}

	First, for each $j$, we exclude all moduli mod $p_{j}$ that are not $0$ or $1$.
	In other words, we exclude the arithmetic progressions:
	\begin{equation}\label{eq:only_01}
	\AP_{jt} \; = \; \big\{ z \in J \; : \; z \equiv t \mod{p_{j}} \big\}
	\quad \text{for} \quad 1 \le j \le \ell,\; 2 \le t < p_{j}\ts.
	\end{equation}
	If $z \notin \bigcup_{jt} \AP_{jt}$ then $z$ is equal to $0$ or $1$ mod every $p_{j}$.
	Now consider each clause $C_{i}(\u)$.
	For example, assume $C_{1}(\u) = u_{1} \lor \lnot u_{2} \lor u_{3}$.
	The negation $\lnot C_{1}(\u)$ is $\lnot u_{1} \land  u_{2} \land \lnot u_{3}$.
	To this, we associate an arithmetic progression:
	\begin{equation}\label{eq:congruent}
	\AP_{1} \; = \; \big\{ z \in J \; : \; z \equiv 0 \mod{p_{1}} \; \land \; z \equiv 1 \mod{p_{2}} \; \land \; z \equiv 0 \mod{p_{3}} \big\}.
	\end{equation}
	By the Chinese remainder theorem, we can write:
	\begin{equation*}
	\AP_{1} \; = \; \big\{ z \in J \; : \; z \equiv e \mod{p_{1}p_{2}p_{3}} \big\},
	\end{equation*}
	where $e$ is unique mod $p_{1}p_{2}p_{3}$ and also computable in polynomial time.
	Then we have:
	\begin{equation}\label{eq:each_clause}
	C_{1}(\u) = \text{true} \quad \iff \quad z \notin \AP_{1}.
	\end{equation}
	Doing this for all clauses $C_{1},\dots,C_{n}$, we get $n$ arithmetic progressions $\AP_{1},\dots,\AP_{n}$.
	From~\eqref{eq:3SAT}, \eqref{eq:only_01} and~\eqref{eq:each_clause}, we conclude that:
	\begin{equation*}
	\Psi(\u) \; = \; \bigwedge_{i=1}^{n} C_{i}(\u) = \text{true} \quad \iff \quad z \; \notin \; \bigcup_{\substack{1 \le i \le n}} \AP_{i} \; \bigcup_{\substack{1 \le j \le \ell \\ 2 \le t < p_{j}}} \AP_{jt} \, .
	\end{equation*}
	Therefore,
	\begin{equation*}
	\ex \u \quad \Psi(\u) = \text{true} \quad \iff \quad  \ex z \in J \;\; : \;\; z \;
	\notin \;  \bigcup_{\substack{1 \le i \le n}} \AP_{i} \; \bigcup_{\substack{1 \le j
			\le \ell \\ 2 \le t < p_{j}}} \AP_{jt}  \,.
	\end{equation*}
	The above LHS is a $\textsc{3SAT}$ sentence, which is $\NP$-complete to decide.
	Thus, the RHS, which is $\textsc{AP-COVER}$, is also $\NP$-complete.
	In total, we have $k \coloneqq n + \sum_{j=1}^{\ell} (p_{j}-1)$ arithmetic progressions,
	each of which can be given as a triple $(g_{i},h_{i},e_{i})$.
\end{proof}

\begin{rem}\label{r:AP-GJ}
	In~\cite[$\S$A7]{GJ}, the problem $\textsc{AP-COVER}$ is phrased differently
	under the name $\textsc{SIMULTANEOUS INCONGRUENCES}$ problem.
\end{rem}

\subsection{Generalization of \textsc{AP-COVER} to $m$ quantifiers}\label{sec:2-AP-COVER}


We consider the following $m$-generalization of the problem $\textsc{AP-COVER}$.

\problemdef
{$m$-AP-COVER}
{
	The following elements:
	
	$\bu$ $m$ intervals $J_{1} = [\mu_{1},\nu_{1}]$ $,\dots,$ $J_{m} = [\mu_{m},\nu_{m}]$,
	
	$\bu$ $k_{1}$ triples $(g_{1i},\ts h_{1i},\ts e_{1i})$, with $1 \le i \le k_{1}$,
	
	$\quad\ldots$
	
	$\bu$ $k_{m}$ triples $(g_{mi},\ts h_{mi},\ts e_{mi})$, with $1 \le i \le k_{m}$,
	
	$\bu$ $m$ integers $\tau_{1},\, \dots,\, \tau_{m} \in \zz$.
}
{
	The truth of the sentence:
	\begin{equation*}
	\hspace{-3.2em}
	\aligned
	Q_{1} (z_{1} \in  J_{1}  \cpl \Delta_{1}) \;\; & \dots \;\;  Q_{m-1} (z_{m-1} \in J_{m-1} \cpl \Delta_{m-1}) \\
	& \dots \;\; Q_{m} (z_{m} \in J_{m}) \;\; : \;\; \tau_{1}z_{1} + \ldots + \tau_{m}z_{m} \notin \Delta_{m}.
	\endaligned
	\end{equation*}
	Here $Q_{1},\dots,Q_{m} \in \{\for,\ex\}$ are $m$ alternating quantifiers with $Q_{m} = \ex$. The sets $\De_{1},\dots,\De_{m}$ are defined as:
	\begin{equation*}
	\De_{t} = \AP_{t1} \cup \dots \cup \AP_{tk_{t}}, \; 1 \le t \le m
	\end{equation*}
	where
	\begin{equation*}
	\AP_{ti} = \AP(g_{ti},h_{ti},e_{ti}), \; 1 \le i \le k_{t}.
	\end{equation*}
}

\vspace{-1em}

For example, $\textsc{$2$-AP-COVER}$ asks whether
\begin{equation}\label{eq:2-AP-COVER}
\for (z_{2} \in J_{2} \cpl \De_{2}) \quad \ex z_{1} \in J_{1} \quad \tau_{1}z_{1} + \tau_{2}z_{2} \notin \De_{1},
\end{equation}
i.e., for all $z_{2} \in J_{2}$
either $z_{2}$ is covered by some $\AP$ in the first group, or there is some
$z_{1} \in J_{1}$ so that their linear combination $\tau_{1}z_{1} + \tau_{2}z_{2}$
is not covered by any $\AP$ in the second group.

\begin{theo}\label{th:m-AP-COVER}
	$\textsc{$m$-AP-COVER}$ is $\SigmaP_{m}$-complete for $m$ odd and $\PiP_{m}$-complete for $m$ even.
\end{theo}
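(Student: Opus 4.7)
Containment of $\textsc{m-AP-COVER}$ in $\SigmaP_m$ (when $m$ is odd) and in $\PiP_m$ (when $m$ is even) is routine: the defining sentence has $m$ alternating quantifiers over integers drawn from intervals of polynomial bit-length, the type of the outermost quantifier matches the parity of $m$, and the quantifier-free body---membership of the linear combination $\tau_1 z_1 + \ldots + \tau_m z_m$ in a union of $\AP$s---is decidable in polynomial time. For hardness, I will lift the reduction from $\textsc{3SAT}$ to $\textsc{AP-COVER}$ in Theorem~\ref{th:SM} to the setting of quantified $3$-CNF formulas
$$Q_1 \u_1 \;\; \ldots \;\; Q_m \u_m \; : \; \Psi(\u_1, \ldots, \u_m),$$
which is $\SigmaP_m$- or $\PiP_m$-complete in the standard way, with the innermost quantifier fixed to $\ex$ (matching $Q_m = \ex$).

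The construction uses the Chinese remainder theorem to pack all $m$ Boolean blocks into one integer. Assign distinct primes $p_{i,j}$ to each Boolean variable $u_{i,j}$; by the prime number theorem the first $\sum_i \ell_i$ primes all have polynomial bit-length. Set $P_i = \prod_j p_{i,j}$, $J_i = [0, P_i)$, and choose $\tau_i$ satisfying $\tau_i \equiv 1 \pmod{p_{i,j}}$ for all $j$ and $\tau_i \equiv 0 \pmod{p_{i',j}}$ whenever $i' \ne i$. For each $i < m$, let $\De_i$ be the union of the $\AP$s $\{z_i : z_i \equiv t \pmod{p_{i,j}}\}$ for $2 \le t < p_{i,j}$ and all $j$; then $z_i \in J_i \cpl \De_i$ iff $z_i$ is a valid CRT-encoding of a Boolean assignment to $\u_i$. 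Define $\De_m$ as the union of (a) the analogous residue exclusions modulo the primes $p_{m,j}$, enforcing validity of $z_m$, and (b) for each clause $C$ of $\Psi$, the single $\AP$ obtained by applying CRT to the falsifying assignment of $\lnot C$ modulo the product of the three relevant primes, exactly as in~\eqref{eq:congruent}.

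Because $z := \tau_1 z_1 + \ldots + \tau_m z_m$ satisfies $z \equiv z_i \pmod{p_{i,j}}$ for every $i, j$, the integer $z$ simultaneously encodes the joint assignment $(\u_1, \ldots, \u_m)$. Therefore $z \notin \De_m$ is equivalent to ``$z_m$ is a valid encoding of $\u_m$ \emph{and} $\Psi(\u)$ holds''; since $Q_m = \ex$, the innermost quantifier $\ex z_m \in J_m$ absorbs the validity requirement and effectively quantifies over Boolean $\u_m$ making $\Psi$ true. Combined with the quantifier-level validity restrictions on $z_1, \ldots, z_{m-1}$, the resulting $\textsc{m-AP-COVER}$ instance is logically equivalent to the input QBF, and all parameters have polynomial bit-length, so the reduction runs in polynomial time.

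The main obstacle is the placement of the CRT-validity constraints across quantifier levels. For $i < m$ they must be enforced at the quantifier (via $z_i \in J_i \cpl \De_i$): pushing them into the body would flip meaning at $\for$ levels, where we need to range over genuine Boolean assignments rather than over all integers. For the innermost block $z_m$, the problem statement forbids subtracting anything from $J_m$, so validity must instead be folded into $\De_m$; this is harmless precisely because $Q_m = \ex$. This bookkeeping is exactly compatible with the parity of $m$ and the fixed innermost $\ex$, and once it is arranged correctly the correctness of the reduction reduces to CRT and the $m=1$ analysis already carried out in Theorem~\ref{th:SM}.
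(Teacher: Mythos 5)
Your proposal is correct and follows essentially the same route as the paper: reduce from quantified 3-CNF with innermost $\ex$, CRT-encode each Boolean block by an integer modulo a product of fresh primes, choose the $\tau_i$ to be $1$ modulo the block's own primes and $0$ modulo the others so that $\tau_1 z_1+\cdots+\tau_m z_m$ recovers every block, enforce validity of the outer blocks at the quantifier level via $J_i\setminus\De_i$, and fold the innermost block's validity together with the clause APs into $\De_m$. The paper writes this out only for $m=2$ and declares the general case analogous; your explicit handling of general $m$, including the observation that folding the innermost validity constraint into the body is sound precisely because $Q_m=\ex$, matches the intended argument.
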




\begin{proof}
	For simplicity, we show that $\textsc{$2$-AP-COVER}$ is $\PiP_{2}$-complete. The proof for general $\textsc{$m$-AP-COVER}$ is analogous.

	This is similar to Theorem~\ref{th:SM}'s proof, but instead of $\textsc{3SAT}$ we decide:
	\begin{equation}\label{eq:2-3SAT}
	\for \v \quad \ex \u \quad \Psi(\u,\v) = \text{true},
	\end{equation}
	where $\u,\v \in \{\text{true},\text{false}\}^{\ell}$, and
	$\Psi(\u,\v) \,=\, \bigwedge_{i=1}^{n} C_{i}(\u,\v)$,
	with each clause $C_{i}(\u,\v)$ a disjunction of three literals from the set
	\begin{equation*}
	\{u_{j},\, \lnot u_{j},\, v_{j},\, \lnot v_{j} \; : \; 1 \le j \le \ell\}.
	\end{equation*}
	
	Deciding~\eqref{eq:2-3SAT} is $\PiP_{2}$-complete (see e.g.~\cite{GJ,Pap}).
	To reduce~\eqref{eq:2-3SAT} to~\eqref{eq:2-AP-COVER}, we again take the first
	$2\ell$ primes $p_{1},\dots,p_{\ell},\, q_{1},\dots,q_{\ell}$.
	Let $\.p=p_{1} \cdots p_{\ell}\.$, $\.q=q_{1} \cdots q_{\ell}\.$ and:
	\begin{equation*}
	J_{1} \; \coloneqq \; [0,\; p) \quad \text{and} \quad J_{2} \; \coloneqq \; [0,\; q).
	\end{equation*}
	Since $\gcd(p,q)=1$, we can also find in polynomial time $\tau_{1},\tau_{2} \in \zz$ so that:
	\begin{equation}\label{eq:tau_def}
	\tau_{1} \equiv 1 \mod{p},\; q \,|\, \tau_{1} \quad \text{and} \quad \tau_{2} \equiv 1 \mod{q},\; p \,|\, \tau_{2}\ts.
	\end{equation}

	Next, we require that $z_{2} \equiv 0 \; \text{or} \; 1 \mod{q_{j}}$ for $i=1,\dots,\ell$.
	This can be expressed as $z_{2} \in J_{2} \cpl \De_{2}$, where $\De_{2}$ is a union of some arithmetic progressions similar to those in~\eqref{eq:only_01}.
	These are the $k_{2}$ progressions $\AP_{21},\dots,\AP_{2k_{2}}$.
	
	We also require $z_{1} \equiv 0 \; \text{or} \; 1 \mod{p_{j}}$ for $j=1,\dots,\ell$.
	By~\eqref{eq:tau_def}, this is equivalent to $\tau_{1}z_{1} + \tau_{2}z_{2} \equiv 0 \; \text{or} \; 1 \mod{p_{j}}$.
	Again, this condition can be expressed as:
	\begin{equation}\label{eq:Gamma}
	\tau_{1}z_{1} + \tau_{2}z_{2} \; \notin \; \Ga_{1}
	\end{equation}
	for $\Ga_{1}$ a union of some arithmetic progressions.
	
	Analogous to~\eqref{eq:moduli}, the variables $z_{1}$ and $z_{2}$ correspond to $\u$ and $\v$, respectively.
	By the Chinese remainder theorem (see~\eqref{eq:congruent} and~\eqref{eq:each_clause}), we can express each clause $C_{i}(\u,\v)$ as:
	\begin{equation*}
	C_{1}(\u,\v) = \text{true} \quad \iff \quad \tau_{1}z_{1} + \tau_{2}z_{2} \; \notin \; \AP_{i}
	\end{equation*}
	for some arithmetic progression $\AP_{i}$ with $i = 1,\dots,n$.
	Let $\De_{1}$ be the union of $\Ga_{1}$ in~\eqref{eq:Gamma} with $\AP_{1},\dots,\AP_{n}$\ts.
	
	Overall, we have $k_{1}+k_{2}$ finite arithmetic progressions from $\De_{1}$ and $\De_{2}$.
	Note that $k_{1}+k_{2}$ is still polynomial compared to $\ell$ and the length of $\Psi$.
	It is straightforward that~\eqref{eq:2-AP-COVER} and~\eqref{eq:2-3SAT}  are equivalent.
	Therefore, deciding~\eqref{eq:2-AP-COVER} is $\PiP_{2}$-complete.
\end{proof}

\bigskip

\section{On Kannan's Partition Theorem} \label{sec:kannan}

\subsection{Validity of KPT} \label{ss:kannan-KPT}
By \emph{Parametric Integer Programming} (PIP), we mean the following problem.
Given an integer matrix $A \in \zz^{m \times n}$ and a $k$-dimensional polyhedron $W \subset \rr^{m}$, is the following sentence true:
\begin{equation}\label{eq:PIP}
\for\ts\b \in W \quad \ex \x \in \zz^{n} \;\; : \;\; A\ts\x \, \le \b.
\end{equation}
We think of $\b$ as a parameter varying over~$W$.
For every fixed $\b$, this gives an Integer Programming problem in fixed dimension $n$.
In~\cite[Theorem~3.1]{K1}, Kannan claimed the following result,
which implies a polynomial time algorithm to decide~\eqref{eq:PIP}.
From here on, we use RA to denote \emph{rational affine transformations}.
Also let $K_{\b} := \{\x \in \R^{n} : A\x \le \b\}$ for every $\b \in W$.

\begin{theo}[Kannan's Partition Theorem]\label{th:KPT}
Fix $n$ and $k$. Given a $\PIP$ problem,
we can find in polynomial time a partition
\begin{equation}\label{eq:KPT_partition}
W = P_1 \sqcup P_2  \sqcup \dots \sqcup P_r,
\end{equation}
where each $P_{i}$ is a rational copolyhedron\footnote{A copolyhedron is a
convex polyhedron with possibly some open facets.}, so that the partition
satisfies the following properties. For each $P_i$, we can find in polynomial
time a finite set $\T_i = \{(S_{ij},T_{ij})\}$ of pairs of RAs
$\ts T_{ij} : \R^{m} \to \R^{n}$ and $S_{ij} : \Z^{n} \to \Z^{n}$,
so that for every $\b \in P_i$ we have:
\begin{equation*}
K_{\b} \cap \Z^{n} \neq \varnothing \; \iff \; \ex (S_{ij},T_{ij}) \in \T_{i} \; : \; S_{ij} \floor{ T_{ij} \b } \in K_{\b}.
\end{equation*}
Furthermore, for each $P_i$, the set $\T_{i}$ contains at most $n^{4n}$ pairs $(S_{ij},T_{ij})$.
The number of all $P_{i}$ is $r \le (m n \phi)^{k n^{\delta n}}$, where $\phi$ is the binary length of $A$ and $\delta$ is a universal constant.
\end{theo}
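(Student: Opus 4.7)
The plan is to proceed by induction on the number $n$ of integer variables, implementing Kannan's test-set strategy from~\cite{K1}. At every inductive level one produces a copolyhedral partition of $W$ together with, for each piece, a bounded list of ``candidate'' integer points of the form $S_{ij}\lfloor T_{ij}\b\rfloor$, chosen so that $K_\b\cap\zz^n\neq\varnothing$ iff at least one candidate lies in $K_\b$. Both the partition and the list of candidates must vary piecewise-uniformly across~$\b$, which is the whole point of parametrizing via rational affine maps.

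The base case $n=1$ is elementary: $A\x\le \b$ in one integer variable is feasible iff $\max_{A_i>0}\lceil b_i/A_i\rceil \le \min_{A_i<0}\lfloor b_i/A_i\rfloor$, and the associated partition of $W$ comes from comparing the $O(m^2)$ ratios $b_i/A_i$ pairwise while the test set records the two extremal rows. For the inductive step $n\ge 2$, I would invoke Khinchine's flatness theorem: there is a constant $\omega(n)$ such that either $K_\b$ is ``fat'' and contains an integer point obtainable by rounding its analytic center via Lenstra's algorithm, or $K_\b$ admits a primitive direction $\v\in\zz^n$ of lattice width at most $\omega(n)$. Using a parametric LLL-type basis reduction applied to the recession cone and the dual lattice shape of $K_\b$, one compiles a finite list $\v_1,\dots,\v_L$ of candidate flat directions and subdivides $W$ according to which $\v_\ell$ is actually flat for $K_\b$: each such region is a copolyhedron cut out by the width inequalities for $\v_\ell$ together with the complementary inequalities ruling out $\v_1,\dots,\v_{\ell-1}$.

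On each piece where $\v$ is a flat direction, feasibility of $K_\b\cap\zz^n$ reduces to the existence of an integer~$t$ in a range of length at most $\omega(n)$ such that the sliced system $\{A\x\le\b,\ \v\cdot\x=t\}$ has an integer solution. Writing $t$ as an affine function of $\b$ rounded to an integer, substitute that slice and recurse on a PIP in $n-1$ variables whose parameter now lies in an enlarged space of dimension $k+1$. The maps $S_{ij},T_{ij}$ at the current level are obtained by composing the maps returned by the recursion with the substitution that eliminates one integer coordinate in favor of $t(\b)$; iterating, after $n$ levels one obtains at most $\omega(n)\cdot L = O(n^4)$ candidates per level, giving the bound $|\T_i|\le n^{4n}$.

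The main obstacle, and the technically delicate heart of the argument, is bounding the combinatorial complexity of the flat-direction subdivision at each recursion level: one must show that the set of $\b$ for which a fixed $\v$ achieves lattice width at most $\omega(n)$ in $K_\b$ admits a polynomial-size copolyhedral description, uniformly over a short list of candidate $\v$'s, and that this list itself is polynomial. Concretely, one needs a lemma asserting that the LLL-reduced dual basis of $K_\b$ depends on~$\b$ through a subdivision of polynomial size. Granted such a lemma, the recursion depth~$n$ is fixed, and a standard recursive accounting with $k$ replaced by $k+1$ at each level yields the stated bound $r\le (m n \phi)^{k n^{\delta n}}$. All steps --- parametric LLL, the width test, the slicing, and the map composition --- run in polynomial time for fixed~$n$ and~$k$, which delivers the polynomial-time conclusion of the theorem.
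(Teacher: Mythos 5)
There is a genuine gap, and it is not one that can be repaired: the statement you are trying to prove is false, and this paper's own contribution is precisely to refute it rather than to prove it. The step you defer --- ``one needs a lemma asserting that the LLL-reduced dual basis of $K_{\b}$ depends on $\b$ through a subdivision of polynomial size'' --- is exactly the point where every such argument breaks down. Theorem~\ref{th:KPT-exp} (proved as Theorem~\ref{t:KPT_wrong}) shows that already for $k=1$ and fixed $m,n$, any copolyhedral partition of the parameter space with the test-set property must have at least $\exp(\ep\phi)$ pieces, where $\phi$ is the bit length of $A$. The obstruction is concrete: taking $\al=[2;1,\dots,1]=F_{2\vk+3}/F_{2\vk+1}$, the integer points of the convex chain $\mathcal{C}'$ of convergents are exactly $C_1,\dots,C_{\vk+1}$, which are in strictly convex position; if a polynomial-size partition existed, then via Lemma~\ref{lem:one_segment}, Lemma~\ref{lem:one_piece} and Corollary~\ref{cor:mid_point} one would get a bounded-size family of rounded-affine test points, forcing three of these $\vk+1$ points to satisfy a midpoint relation $\y_3=(\y_1+\y_2)/2$ once $\vk$ is large --- a contradiction. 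So no choice of parametric flatness directions, slicings and recursions can make your subdivision both copolyhedral and polynomial in size: the piecewise structure genuinely cannot be captured by polynomially many copolyhedra on which a constant number of rounded affine candidates works.

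Your outline in fact mirrors Kannan's original strategy from~\cite{K1}, and the flaw sits in the same place as the gap in his proof (see $\S$\ref{ss:finrem-KPT-gap}): the quantities you round, such as the slice index $t(\b)$ or the reduced basis, depend on $\b$ in a way that cannot be frozen to finitely many affine forms per copolyhedral piece without an exponential refinement of the partition. The correct salvageable statement is the Eisenbrand--Shmonin theorem (Theorem~\ref{th:ES}), in which the pieces $S_i$ are allowed to be \emph{integer projections} of polyhedra rather than copolyhedra; with that weaker structure a polynomial-size partition with bounded test sets does exist, and it suffices for the known positive applications (Theorem~\ref{th:Kannan}, Theorem~\ref{cor:PR2}). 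If you want a proof to write up, aim at Theorem~\ref{th:ES}, not at the statement as given.
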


KPT claims that in order to solve for an $\x \in \Z^{n}$ satisfying $A\x \le \b$ with $\b$ varying over~$W$, we only need to preprocess the matrix $A$ in polynomial time and obtain a polynomial number of regions $P_{i}$.
When queried with $\b \in P_i$, we only need to check for a fixed number ($n^{4n}$) of candidates of the form $\x = S_{ij} \floor{ T_{ij} \b }$ to get an integer solution in $K_{\b}$ (if any exists).




Let us prove that $\KPT$, if true, would imply far stronger statements for a
PIP problems that involves only a matrix of fixed length~$m$. From now on, fix $m,n$ and $k$.
By KPT and the observation $mn \le \phi$, the number of regions $P_{i}$ in~\eqref{eq:KPT_partition}
 can be bounded as:
\begin{equation}\label{eq:num_pieces}
r \; \le \; (m n \phi)^{k n^{\delta n}} \; \le \; \phi^{\ga(n,k)} \..
\end{equation}
Here $\ga(n,k)$ is a constant which depends only on $n$ and $k$.
The following structural result is an implication of KPT when the parameter space $W$ is $1$-dimensional, i.e. when $k=1$~:
\begin{equation}\label{eq:1_dim_param}
W \; = \; \{f(y) \in \rr^{m} \; : \; y \in I\}
\end{equation}
where $f : \rr^{1} \to \rr^{m}$ is a RA, and $I \subset \rr$ a bounded interval.

\begin{lem}\label{lem:one_segment}
Assume~\eqref{eq:num_pieces} holds.
Given a $\PIP$ problem
with a $1$-dimensional parameter space $W$ \eqref{eq:1_dim_param},
there exists a finite set $\T = \{(S_{j},T_{j})\}$  of pairs of RAs $\ts T_{j} : \R^{1} \to \R^{n}$ and $S_{j} : \Z^{n} \to \Z^{n}$ so that the following hold.
For every $y \in I \cap \zz$ and $\ts\b = f(y) \in \rr^{m}$, we have:
\begin{equation*}
K_{\b} \cap \Z^{n} \neq \varnothing \quad \iff \quad \ex (S_{j},T_{j}) \in \T \; : \; S_{j} \floor{ T_{j} y } \in K_{\b}.
\end{equation*}
Furthermore, the set $\T$ contains at most $c(n)$ pairs $(S_{j},T_{j})$,
where $c(n)$ is a constant which depends only on~$n$.
\end{lem}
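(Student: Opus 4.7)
The plan is to apply Kannan's Partition Theorem (Theorem~\ref{th:KPT}) directly to the given PIP with parameter space $W = f(I)$, and then pull the resulting test data back to the one-dimensional variable $y$. Since $k=1$ and each $P_i$ in the partition $W = P_1 \sqcup \cdots \sqcup P_r$ is a copolyhedron inside the affine line $f(\rr)$, its preimage $f^{-1}(P_i) \cap I$ is a subinterval of $I$; the preimages thus form a partition $I = I_1 \sqcup \cdots \sqcup I_r$. By the standing assumption~\eqref{eq:num_pieces}, we have $r \le \phi^{\gamma(n,1)}$.

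For each piece, KPT furnishes a test set $\T_i = \{(S_{ij},T_{ij}) : 1 \le j \le |\T_i|\}$ with $|\T_i| \le n^{4n}$, consisting of RAs $T_{ij}: \rr^m \to \rr^n$ and $S_{ij}: \zz^n \to \zz^n$. For every such pair, set $T'_{ij} := T_{ij} \circ f : \rr \to \rr^n$, which is again an RA, and define
\begin{equation*}
\T \;:=\; \bigcup_{i=1}^{r} \big\{(S_{ij},\, T'_{ij}) \,:\, 1 \le j \le |\T_i|\big\}.
\end{equation*}
The characterization follows immediately: for $y \in I \cap \zz$ with $\b = f(y) \in P_i$, KPT asserts that $K_\b \cap \zz^n \neq \varnothing$ iff some pair in $\T_i$ yields $S_{ij}\lfloor T_{ij}\b\rfloor \in K_\b$, and since $\lfloor T_{ij} f(y) \rfloor = \lfloor T'_{ij} y \rfloor$ this is exactly $S_{ij}\lfloor T'_{ij} y\rfloor \in K_\b$ for the corresponding pulled-back pair in~$\T$. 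The converse direction is immediate since any such witness is by construction an integer point of $K_\b$.

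The hard part will be establishing the size bound $|\T| \le c(n)$. The naive count from the previous step only yields $|\T| \le r \cdot n^{4n}$, which is \emph{polynomial} in $\phi$ rather than constant. To collapse this, I would exploit the combinatorial structure inherent in Kannan's test-set construction: each pair $(S_{ij},T_{ij})$ is built from a choice of at most $n$ ``active'' rows out of the $m$ rows of $A$ together with a bounded amount of rounding data depending only on $n$, so the number of distinct such templates is bounded by some constant $c(m,n)$. Since $m$ is fixed throughout the lemma, after identifying duplicates the union $\bigcup_i \T_i$ reduces to at most $c(n)$ genuinely distinct pairs. The crucial point, and the place where the proof has most content, is this \emph{redundancy}: the polynomially many pieces $P_i$ produced by KPT repeatedly reuse the same constant-sized pool of test templates, yielding the desired bound $|\T| \le c(n)$.
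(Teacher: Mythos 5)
Your reduction to KPT and the pull-back of the test maps via $T'_{ij}=T_{ij}\circ f$ is fine as far as it goes, and you correctly identify the real difficulty: the naive union over all pieces gives $|\T|\le r\cdot n^{4n}$, which is only polynomial in $\phi$. But the fix you propose does not work. KPT makes no claim that the pairs $(S_{ij},T_{ij})$ attached to different pieces $P_i$ are drawn from a pool of constantly many ``templates''; the affine maps produced by Kannan's construction genuinely depend on the piece (they encode, among other things, branchings on values of floor functions that vary with $\b$ --- precisely the dependence discussed in $\S$\ref{ss:finrem-KPT-gap}). Asserting constant redundancy across the $r$ pieces is an unproven structural hypothesis strictly stronger than the bound~\eqref{eq:num_pieces} you are allowed to assume, so the proof has a genuine gap at its central step.

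The paper's proof avoids any deduplication by an amplification (self-reduction) trick. One rescales to $I=[0,N)$, chooses $M$ enormously larger than any polynomial in the bit length (see~\eqref{eq:M}), and builds the auxiliary PIP~\eqref{eq:add_var} with a single parameter $y'\in J=[0,MN)$ in which each block $[kN,(k+1)N)$ carries a translated copy of the \emph{entire} original problem (via $y_1=\floor{y'/N}$, $y_2=y'-Ny_1$). The assumed bound~\eqref{eq:num_pieces} gives $r\le(2mn\log M)^{\gamma}\ll M$ pieces for the auxiliary problem, so by pigeonhole some single piece $J_1$ contains a whole block $I'=[kN,(k+1)N)$. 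The test set of that \emph{one} piece has at most $(n+2)^{4(n+2)}$ pairs by KPT, and after the translation $y=y'-Nk$ it serves as the constant-size test set $\T$ for the original problem on all of $I$. If you want to salvage your write-up, you should replace the ``template redundancy'' step with this pigeonhole argument; without it (or some other mechanism forcing a single piece to cover all of $I$), the constant bound $c(n)$ is not established.
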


\begin{rem}
The above lemma says that the bound~\eqref{eq:num_pieces} as implied by $\KPT$ would guarantee a small set of candidates for any ``short'' PIP problem $A\x \le f(y)$ with $1$-dimensional parameters $y$.
The number of candidates $c(n)$ depends only on the dimension $n$.
\end{rem}

\begin{proof}[Proof of Lemma~\ref{lem:one_segment}]
WLOG, assume $I = [0,N)$ and $A = (a_{ij}) \in \zz^{m \times n}$. Let
\begin{equation}\label{eq:M}
M \; = \; N \; \prod_{ij} (|a_{ij}|+1) \;  \prod_{k} (|p_{k}q_{k}| + 1),
\end{equation}
where $p_{k}/q_{k}$ runs over all rational coefficients in $f$.
Let $J = [0,MN)$.
Consider the following $\PIP$ problem with one parameter $y' \in J$ and $n+2$ integer variables $\x\in\zz^{n}$, $y_{1},y_{2} \in \zz$:
\begin{equation}\label{eq:add_var}
Ny_{1} + y_{2} = y',\quad 0 \le y_{1} < M,\quad 0 \le y_{2} < N,\quad A\x - f(y_{2}) \le 0.
\end{equation}
Observe that when~\eqref{eq:add_var} is feasible, the values of $y_{1}$ and $y_{2}$ are uniquely determined.
Indeed, we should have $y_{1} = \floor{y'/N}$ and $y_{2}=y' - Ny_{1}$.
So as $y'$ varies over $J \cap \zz$, the solutions of~\eqref{eq:add_var} correspond bijectively with the solutions of the original PIP problem $A\x \le f(y)$ where $y = \floor{y'/N} \in I$.

Clearly,~\eqref{eq:add_var} can be put into the form $B\z \le g(y')$ where $\z=(\x,y_{1},y_{2}) \in \zz^{n+2}$ are variables and $g$ is an RA.
Let $\b' = g(y')$, then the problem takes the form $B\z \le \b'$.
Also let $W' = \{\b' = g(y') : y' \in J\}$.
Applying KPT to the PIP problem $B\z \le \b'$ with a $1$-dimensional parameter space $W'$, we have a partition of $W'$ into polynomially many intervals.
Since $\b'=g(y')$ and $g$ is an RA, this partition induces another partition on $J$ (the space for $y'$) into intervals:
\begin{equation}\label{eq:int_partition}
J \; = \; J_{1} \sqcup \dots \sqcup J_{r}\ts.
\end{equation}

By~\eqref{eq:num_pieces}, the number $r$ of all intervals in this partition is polynomial in the binary length of the matrix $B$.
From~\eqref{eq:M} and~\eqref{eq:add_var}, it is clear that $B$ has no more than $2mn$ entries, each bounded by $M$.
Therefore, we have:
\begin{equation}\label{eq:r_M}
r \; \le \; \Bigg( \sum_{ij} \; \lceil \log b_{ij} \rceil \Bigg)^{\ga} \; \le \; (2mn \log M)^{\ga} \; \ll \; M.
\end{equation}
Here $\ga=\ga(n,k)$ is some constant degree guaranteed by KPT.
Since $r \ll M$, some interval $J_{i}$ from~\eqref{eq:int_partition} must contain an entire subinterval $I'=[kN,(k+1)N)$ for some $0 \le k < M$.
For simplicity, assume $I'=[kN,(k+1)N] \subseteq J_{1}$.

Also by KPT, for the interval $J_{1}$,  there is a set of candidates $\T_{1} = \{(S_{1j},T_{1j})\}$ of size at most $c(n) \coloneqq (n+2)^{4(n+2)}$ for the PIP problem $B\z \le \b'$.
For every $y' \in I' \subseteq J_{1}$, each solution of~\eqref{eq:add_var} should have $y_{1}=k$ and $y_{2} = y' - Nk$.
By a translation $y = y' - Nk$, we can map $I'$ back to $I$.
Accordingly, we can modify each candidate $(S_{ij},T_{ij}) \in \T_{i}$ to be a pair of RAs in $y$.
Clearly, they serve as candidates for the original PIP problem $A\x \le f(y)$ with $y \in I$.
\end{proof}

Lemma~\ref{lem:one_segment} can be easily  boosted to a $k$-dimensional parameter space $W$ for a fixed $k$:
\begin{equation}\label{eq:k_dim_param}
W \; = \; \{f(\y) \in \rr^{m} \; : \; \y \in R\}
\end{equation}
with $f : \rr^{k} \to \rr^{m}$ an RA and $R \subset \rr^{k}$ a rectangular box.

\begin{lem}\label{lem:one_piece}
Assume~\eqref{eq:num_pieces} holds.
Given a $\PIP$ problem 
with a $k$-dimensional parameter space $W$ \eqref{eq:k_dim_param},
there exists a finite set $\T = \{(S_{j},T_{j})\}$  of pairs of RAs $\ts T_{j} : \R^{k} \to \R^{n}$ and $S_{j} : \Z^{n} \to \Z^{n}$ so that the following hold.
For every $\y \in R \cap \zz^{k}$ and $\ts\b = f(\y) \in \rr^{m}$, we have:
\begin{equation*}
K_{\b} \cap \Z^{n} \neq \varnothing \quad \iff \quad \ex (S_{j},T_{j}) \in \T \; : \; S_{j} \floor{ T_{j} \y } \in K_{\b}\ts.
\end{equation*}
Furthermore, the set $\T$ contains at most $c(n,k)$ pairs $(S_{j},T_{j})$,
where $c(n,k)$ is a constant which depends only on $n$ and~$k$.
\end{lem}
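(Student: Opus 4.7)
\emph{Proof proposal.} The plan is to generalize the proof of Lemma~\ref{lem:one_segment} to a $k$-dimensional parameter space by replacing its one-dimensional pigeonhole with a $k$-dimensional one. WLOG we may assume $R = \prod_{i=1}^{k}[0,N_{i})$ with each $N_{i}\in \zz_{+}$ by absorbing an integer translation into $f$. Fix a large integer $M$ to be specified and consider the enlarged box $R' = \prod_{i=1}^{k}[0, M N_{i})$.

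Analogously to~\eqref{eq:add_var}, introduce $2k$ auxiliary integer variables $y_{1,i},y_{2,i}$ for $i = 1,\dots,k$, subject to $N_{i}\ts y_{1,i} + y_{2,i} = y'_{i}$, $\ts 0 \le y_{1,i} < M$, $\ts 0 \le y_{2,i} < N_{i}$, together with $A\x \le f(y_{2,1},\dots,y_{2,k})$. This yields a PIP problem of the form $B\z \le g(\y')$, where $\z = (\x, y_{1,1}, y_{2,1}, \dots, y_{1,k}, y_{2,k}) \in \zz^{n+2k}$ and the parameter $\y'$ ranges over $R'$; its solutions are in bijection with those of the original problem at $\y = (y'_{1} \bmod N_{1}, \dots, y'_{k} \bmod N_{k})$. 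Applying KPT to the enlarged problem partitions $R'$ into $r$ copolyhedra, and by~\eqref{eq:num_pieces} we have $r \le \phi^{\gamma(n+2k,k)}$, polynomial in $\log M$ and in the fixed input data.

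The main novelty is a higher-dimensional pigeonhole. Tile $R'$ by the $M^{k}$ sub-boxes $B_{(a_{1},\dots,a_{k})} = \prod_{i=1}^{k}[a_{i}N_{i},(a_{i}+1)N_{i})$ indexed by $(a_{1},\dots,a_{k}) \in [0,M)^{k}\cap\zz^{k}$, and call such a sub-box \emph{bad} if some bounding hyperplane of the KPT partition meets its interior. For any such hyperplane $H : \mathbf{c}\cdot \y' = d$, pick the index $j$ maximizing $|c_{j} N_{j}|$ (nonzero since $H$ is non-degenerate). For each fixed choice of $(a_{i})_{i \ne j}$, the sub-box $B_{(a_{1},\dots,a_{k})}$ is cut by $H$ iff the linear form $L = \sum_{i} c_{i} N_{i} a_{i}$ falls in an interval of length $S = \sum_{i}|c_{i}| N_{i}$; this forces $a_{j}$ into an interval of length $S / |c_{j} N_{j}| \le k$ on the real line, giving at most $k+1$ valid integer values of $a_{j}$. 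Summing over the $M^{k-1}$ choices of $(a_{i})_{i \ne j}$ yields at most $(k+1) M^{k-1}$ cut sub-boxes per hyperplane. Letting $p$ be the total number of facet hyperplanes across all pieces of the partition (polynomial in the input size, hence in $\log M$, by KPT's polynomial-time output guarantee), the number of bad sub-boxes is at most $(k+1)\ts p\ts M^{k-1}$. Choosing $M > (k+1)\ts p$ ensures that some good sub-box $B_{(a_{1},\dots,a_{k})}$ lies entirely inside a single piece $P_{i_{0}}$.

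Finally, apply KPT's candidate guarantee to $P_{i_{0}}$, obtaining at most $(n+2k)^{4(n+2k)}$ pairs $(S_{l}, T_{l})$ that witness feasibility uniformly over $\y' \in B_{(a_{1},\dots,a_{k})}$. For such $\y'$ the auxiliary variables are forced to $y_{1,i} = a_{i}$ and $y_{2,i} = y'_{i} - a_{i}N_{i}$, so projecting each $S_{l}\floor{T_{l}\y'}$ onto its $\x$-components and composing with the affine change of parameter $\y \mapsto \y + (a_{1}N_{1},\dots,a_{k}N_{k})$ yields candidates $(\tilde S_{l}, \tilde T_{l})$ for the original problem $A\x \le f(\y)$ on $\y \in R \cap \zz^{k}$, exactly as at the end of Lemma~\ref{lem:one_segment}. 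Setting $c(n,k) := (n+2k)^{4(n+2k)}$ completes the argument. The main obstacle will be the higher-dimensional pigeonhole: in one dimension the ``cut'' positions are isolated points and bounding them by $r$ is trivial, whereas in $\rr^{k}$ a single hyperplane can a priori cut many sub-boxes within a column, and the key observation that picking the index $j$ maximizing $|c_{j} N_{j}|$ limits each column to $k+1$ cuts gives the overall $O(k\ts M^{k-1})$ bound, one order below the total count $M^{k}$. A secondary concern is verifying that the total number of facet hyperplanes is polynomial (not just the number of pieces $r$), which follows from the polynomial-time output guarantee of KPT.
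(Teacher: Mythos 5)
Your proof is correct, but it takes a genuinely different route from the paper's. The paper proves Lemma~\ref{lem:one_piece} by \emph{flattening}: the box $R$ is mapped bijectively on integer points onto a single interval $I=[0,\ts r_1\cdots r_k)$ via $y' = y_1 + y_2r_1 + \cdots + y_k(r_1\cdots r_{k-1})$, the system is rewritten as a PIP problem with the one-dimensional parameter $y'$ and $n+k$ variables, and Lemma~\ref{lem:one_segment} is then invoked as a black box, yielding $c(n,k)=(n+k+2)^{4(n+k+2)}$. You instead re-run the argument of Lemma~\ref{lem:one_segment} directly in $k$ dimensions: blow up $R$ to $R'$ by a factor $M$ in each coordinate and locate a sub-box whose interior is avoided by every facet hyperplane of the KPT partition. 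The two new ingredients you need --- and correctly supply --- are the estimate that a single hyperplane cuts at most $(k+1)M^{k-1}$ of the $M^{k}$ sub-boxes (via the choice of the coordinate $j$ maximizing $|c_jN_j|$, which bounds $S/|c_jN_j|$ by $k$), and the observation that the relevant polynomial bound must apply to the total number of facet hyperplanes, not merely to the number of pieces $r$; both points are vacuous in one dimension and are exactly where a naive generalization would stall. Your version is self-contained and geometrically transparent but longer; the paper's is shorter and reuses the 1-D lemma at the cost of the less intuitive flattening map. Both deliver a constant $c(n,k)$ (yours $(n+2k)^{4(n+2k)}$ versus the paper's $(n+k+2)^{4(n+k+2)}$), which is all the lemma needs. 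One point worth making explicit: since $p$ depends on $M$ (through the size of the input handed to KPT), the requirement $M>(k+1)p$ must be arranged a priori by taking $M$ exponentially large in the original data, so that a polynomial in $\log M$ is dominated by $M$; this is the same bootstrapping as in \eqref{eq:M}--\eqref{eq:r_M} and is not a gap.
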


\begin{proof}
WLOG, assume $R = [0,r_{1}) \times \ldots \times [0,r_{k})$.
We ``flatten'' the $k$-dimensional parameter~$\y$.
For every $\y = (y_{1},\dots,y_{k}) \in R$, let:
\begin{equation}\label{eq:flatten}
y' \; = \; y_{1} + y_{2}r_{1} + y_{3}(r_{1}r_{2}) + \ldots + y_{k}(r_{1} \cdots r_{k-1}) \; \in \; [0,\, r_{1}\cdots r_{k}).
\end{equation}
This RA maps the integer points in $R$ bijectively to those in $I = [0,\, r_{1} \cdots r_{k})$.
We rewrite $A\x \le f(\y)$ as another PIP problem with a $1$-dimensional parameter $y' \in I$ and $n+k$ variables $\x \in \zz^{n}$, $\y \in \zz^{k}$:
\begin{equation}\label{eq:flattened}
\gathered
y' \; = \; y_{1} + y_{2}r_{1} + y_{3}(r_{1}r_{2}) + \ldots + y_{k}(r_{1} \cdots r_{k-1}),\\
0 \le y_{i} < r_{i} \;\; \text{for} \;\; 1 \le i \le k, \quad A\x - f(\y) \le 0.
\endgathered
\end{equation}

Note that~\eqref{eq:flattened} has a solution if and only if the original PIP problem $A\x \le f(\y)$ has a solution.
Furthermore, in every solution of~\eqref{eq:flattened}, the variables $\y$ are uniquely determined by $y'$ via the RA~\eqref{eq:flatten}.
Applying Lemma~\ref{lem:one_segment}, we get a set $\T' = \{(S'_{j},T'_{j})\}$ of at most $c(n,k) \coloneqq (n+k+2)^{4(n+k+2)}$ candidates for~\eqref{eq:flattened}, where $T'_{j} : \R^{1} \to \R^{n+k}$ and $S'_{j} : \zz^{n+k} \to \zz^{n+k}$ are pairs of RAs.
Using~\eqref{eq:flatten}, we can re-express each pair $(S'_{j},T'_{j})$ as a pair $(S_{j},T_{j})$ with $T_{j} : \rr^{k} \to \rr^{n}$ and $S_{j} : \zz^{n} \to \zz^{n}$ so that~\eqref{eq:flattened} has a solution if and only if $\x = S_{j} \floor{T_{j} \y}$ satisfies $A\x \le f(\y)$ for some $j$.
In other words, $\cal T = \{(S_{j}, T_{j})\}$ is a finite set of at most $c(n,k)$ candidates for the original PIP problem $A\x \le f(\y)$.
\end{proof}

\begin{rem}\label{rem:floor}{\rm
Since the dimensions of $A$ are fixed, each condition $S_{ij} \floor{ T_{ij} \y } \in K_{\b}$ can be expressed as a short Boolean combination of linear inequalities, at the cost of introducing a few extra $\ex$ or $\for$ quantifiers. For example, a condition $\frac{1}{2} + \floor{y/5} \le 3$ for $y \in \zz$ can be expressed as either
\begin{equation}\label{eq:floor}
\exists\ts t
\begin{Bmatrix}\,
t &\le &y/5\\
t &> &y/5 - 1\\
\, \frac{1}{2} + t &\le &3\,
\end{Bmatrix}
\quad \text{or} \quad
\for t
\begin{bmatrix} \,
t &> &y/5\\
t &\le &y/5 - 1\\
\, \frac{1}{2} + t &\le &3 \,
\end{bmatrix}.
\end{equation}
Here $\{\cdot\}$ is a conjunction and $\left[\cdot\right]$ is a disjunction.
}\end{rem}

Now we relax the parameter space $W$ to an arbitrary $k$-dimensional polyhedron, i.e.,
\begin{equation}\label{eq:k_dim_polytope}
W \; = \; \{f(\y) \in \rr^{m} \; : \; \y \in Q\}
\end{equation}
with $f : \rr^{k} \to \rr^{m}$ an RA and $Q \subset \rr^{k}$ a polyhedron.

\begin{cor}\label{cor:mid_point}
Assume~\eqref{eq:num_pieces} holds.
Then for every fixed $m,n$ and $k$, there is a constant $d(m,n,k)$ so that the following holds.
For a $\PIP$ problem 
with a $k$-dimensional parameter space $W$ \eqref{eq:k_dim_polytope},
let:
\begin{equation*}
Q' \; = \; \big\{\ts \y \in Q \cap \zz^{k} \; : \; A\x \le f(\y) \;\; \text{has no solutions} \;\; \x \in \zz^{n} \ts \big\}.
\end{equation*}
If $|Q'| > d(m,n,k)$, then it contains three distinct points $\y_{1},\y_{2},\y_{3}$ with $\y_{3} = (\y_{1} + \y_{2})/2$.
\end{cor}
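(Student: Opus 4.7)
My approach is to combine Lemma~\ref{lem:one_piece} with a parity-pigeonhole argument in a lifted space. First, apply Lemma~\ref{lem:one_piece} to the PIP problem $A\x \le f(\y)$ with parameter space $W$, obtaining a finite candidate set $\T = \{(S_j, T_j)\}_{j=1}^{c}$ of size $c=c(n,k)$. Then $\y \in Q \cap \zz^k$ belongs to $Q'$ iff for every $j \in [c]$ some linear inequality of $A\x \le f(\y)$ is strictly violated by the candidate $\x_j := S_j \floor{T_j \y}$, i.e., $(AS_j \floor{T_j \y})_{i} > f(\y)_{i}$ for some $i \in [m]$. For each $\y \in Q'$ record, for every $j$, the least such violated index $i_j(\y) \in [m]$; the resulting tuple $\tau(\y) = (i_1, \ldots, i_c) \in [m]^c$ is the \emph{type} of $\y$. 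Since there are at most $m^c$ types, some type class $Q'_\tau$ satisfies $|Q'_\tau| \ge |Q'|/m^c$.

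Next, for a fixed type $\tau = (i_1, \ldots, i_c)$, lift to the copolytope
\[
P_\tau \,:=\, \bigl\{(\y, \v_1, \ldots, \v_c) \in \rr^{k+cn} \,:\, \y \in Q,\; \v_j \le T_j \y < \v_j + \mathbf{1},\; (AS_j \v_j)_{i_j} > f(\y)_{i_j},\; 1 \le j \le c \bigr\}.
\]
The map $\y \mapsto (\y, \floor{T_1 \y}, \ldots, \floor{T_c \y})$ is a bijection between $Q'_\tau$ and $\Pi_\tau := P_\tau \cap \zz^{k+cn}$. Now if $|\Pi_\tau| > 2^{k+cn}$, pigeonhole on residues modulo~$2$ yields distinct $p_1, p_2 \in \Pi_\tau$ with $p_1 \equiv p_2 \pmod{2}$ componentwise, so $p_3 := (p_1+p_2)/2$ is an integer point. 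All defining constraints of $P_\tau$ are convex, and both weak and strict linear inequalities are preserved under midpoint averaging---in particular the half-open constraint $\mathbf{0} \le T_j \y - \v_j < \mathbf{1}$ survives because $a,b \in [0,1)$ forces $(a+b)/2 \in [0,1)$. Hence $p_3 \in \Pi_\tau$. Projecting $p_1, p_2, p_3$ onto the first $k$ coordinates yields three distinct $\y_1, \y_2, \y_3 \in Q'_\tau \subseteq Q'$ with $\y_3 = (\y_1+\y_2)/2$, distinctness being inherited from injectivity of the lift. Taking $d(m,n,k) := m^{c(n,k)} \cdot 2^{\,k + c(n,k)\,n}$ completes the argument: whenever $|Q'| > d(m,n,k)$, some type class has $|Q'_\tau| > 2^{k+cn}$, and the chain above produces the required 3-AP.

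The main obstacle, though it turns out to be mild, is verifying in the lifted step that the midpoint $p_3$ actually lies in $\Pi_\tau$: floors do not generally commute with midpoints, so a naive residue pigeonhole on $\y$ modulo the common denominator $D$ of $T_1, \ldots, T_c$ would yield a bound depending on $D$ and hence on the input matrix~$A$. The lifting trick decouples $\y$ from its floor data $\v_j = \floor{T_j \y}$, letting the half-open interval $[0,1)$ carry the entire structural burden; this is precisely what forces the constant $d$ to depend only on $m, n, k$ (via the uniform bound $c(n,k)$ from Lemma~\ref{lem:one_piece}) and not on the input coefficients. Note also that $Q$ itself may have arbitrarily many facets, but this is harmless because only convexity of $P_\tau$ is used, not any bound on the number of its facets.
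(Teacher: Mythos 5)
Your proof is correct and follows essentially the same route as the paper: apply Lemma~\ref{lem:one_piece}, linearize the floor conditions by auxiliary integer variables, split $Q'$ into boundedly many convex cells on which the ``all candidates fail'' condition is a fixed system of (weak and strict) affine inequalities, and then use a mod-$2$ pigeonhole plus convexity to produce the integer midpoint. Your explicit ``types'' and lifted copolytopes $P_\tau$ are just a concrete realization of the paper's hyperplane-arrangement cells obtained via Remark~\ref{rem:floor}, so the two arguments coincide in substance.
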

\begin{proof}
Let $R$ be a large enough box that contains $Q$.
Applying Lemma~\ref{lem:one_piece} to the PIP problem $A\x \le f(\y)$ with $\y \in R$, we get a set of candidates $\T = \{(S_{j}, T_{j})\}$ of size at most $c(n,k)$ so that:
\begin{equation*}
A\x \le f(\y) \;\; \text{has no solutions} \quad \iff \quad \for (S_{j},T_{j}) \in \T \; : \; S_{j} \floor{ T_{j} \y } \not\le f(\y).
\end{equation*}

By the argument in Remark~\ref{rem:floor}, each condition $ S_{j} \floor{ T_{j} \y } \not\le f(\y)$ can be expressed by a short Presburger formula $\ex \t \; \Phi_{j}(\y,\t)$ with length bounded in $m$ (fixed).
Taking conjunction over all such formulas for $1 \le j \le c(n,k)$, we have:
\begin{equation}\label{eq:criterion}
A\x \le f(\y) \;\; \text{has no solutions} \quad \iff \quad \ex\wt\t \quad \Phi(\y,\wt\t).\footnote{Separate variables $\t$ for different $\Phi_{j}$ must be concatenated into $\wt\t$.}
\end{equation}
Here $\Phi$ is still a short Presburger expression in a bounded number of variables.
Denote by $\lambda$ and $\mu$ the total number of variables and inequalities in $\Phi$, respectively.
Both of these are constants in $m,n$ and $k$.
Let $d = d(m,n,k) = 2^{\lambda+\mu}$.
The $\mu$ inequalities in $\Phi$ determine $\mu$ hyperplanes in $\rr^{\lambda}$.
These hyperplanes partition $\rr^{\lambda}$ into polyhedral regions:
\begin{equation*}
\rr^{\lambda} \; = \; W_{1} \sqcup \dots \sqcup W_{\eta},
\end{equation*}
with $\eta \le 2^{\mu}$.
Observe that as $(\y,\wt\t)$ varies over a single region $W_{j}$, the value of $\Phi(\y,\wt\t)$ is always true or always false.
Since $|Q'| > d$, we have at least $d+1$ distinct pairs $(\y_{1},\wt\t_{1}), \dots, (\y_{d+1},\wt\t_{d+1})$ for each of which $\Phi(\y_{i},\wt\t_{i}) = \text{true}$.
By the pigeon hole principle, some region $W_{j}$ contains at least $2^{\lambda}+1$ of these pairs.
Each such pair is a point in $\zz^{\lambda}$, so at least two of them must have coordinates equal mod $2$ pairwise.
Assume $(\y_{1},\wt\t_{1})$ and $(\y_{2},\wt\t_{2})$ are two such two pairs.
By convexity, $(\y_{1}+\y_{2},\wt\t_{1}+\wt\t_{2})/2$ is another integer point in $W_{j}$.
Since $\Phi$ is always true over $W_{j}$, this pair also satisfies $\Phi$.
By~\eqref{eq:criterion}, the point $\y_{3}=(\y_{1}+\y_{2})/2$ also lies in $Q'$.
We conclude that $\y_{1},\y_{2},\y_{3} \in Q'$.
\end{proof}

\medskip

\begin{theo}\label{t:KPT_wrong}
The bound~\eqref{eq:num_pieces} as claimed by $\KPT$ does not hold in full generality.
In other words, even for $k=1$ and fixed $m,n$,
the number of pieces $r$ in the partition~\eqref{eq:KPT_partition} must be at least
$\exp(\ep\phi)$ for some constant $\ep=\ep(m,n)>0$.
\end{theo}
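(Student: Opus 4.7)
The plan combines a per-piece refinement of Corollary~\ref{cor:mid_point} with the construction of hard PIP instances whose infeasibility set is simultaneously exponentially large and free of three-term arithmetic progressions.

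First, I would refine Corollary~\ref{cor:mid_point} to a per-piece statement. Suppose an admissible KPT partition $W = P_1 \sqcup \dots \sqcup P_r$ exists for our PIP $A\x \le f(y)$, with each $P_i$ carrying a test set $\T_i$ of at most $n^{4n}$ pairs $(S_{ij}, T_{ij})$. Inside $P_i$, the condition ``$A\x \le f(y)$ has no integer solution'' reads $\bigwedge_j \bigl( S_{ij}\floor{T_{ij} y} \notin K_{f(y)} \bigr)$. Applying the floor-elimination trick from Remark~\ref{rem:floor} to each occurrence of $S_{ij}\floor{T_{ij} y}$, this becomes a short Presburger formula $\exists \tilde\t\; \Phi_i(y,\tilde\t)$ whose total number of variables $\lambda$ and inequalities $\mu$ is bounded by a constant $C = C(m,n)$ depending only on $m$ and $n$. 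Running the pigeonhole-plus-midpoint argument from Corollary~\ref{cor:mid_point}'s proof inside the single piece $P_i$ then yields a dichotomy: either $|Q' \cap P_i| \le 2^C$, or $Q' \cap P_i$ already contains three distinct integers in arithmetic progression. Consequently, if $Q'$ is globally free of 3-term APs, summing over the $r$ pieces gives
\[
r \;\ge\; |Q'| \cdot 2^{-C}.
\]

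Second, I would exhibit, for every bit length $\phi$, a PIP with $k=1$ and fixed $m,n$ whose matrix has bit length $O(\phi)$ and whose infeasibility set $Q'$ is 3-AP-free and has size $\ge 2^{\ep\phi}$. One natural route is via the AP-COVER-to-$\GIP$ reduction of Section~\ref{sec:system_proofs}: choose the arithmetic progressions $\AP_1,\dots,\AP_k$ so that $J \setminus \bigcup_i \AP_i$ realizes a Behrend--Salem--Spencer-type 3-AP-free set, e.g., the set of integers in $[0, b^d)$ whose base-$b$ digit expansion avoids a forbidden subset of digits. The 3-AP-freeness follows from a standard no-carry argument, and the complement is a union of $O(d)$ arithmetic progressions (one per digit position and forbidden digit). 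The union-of-polytopes lifting trick from the proofs of Theorems~\ref{cor:system1} and~\ref{cor:system2} then compresses this data into a $\GIP$ of fixed dimension at the cost of a single auxiliary variable per lift. Combining with the first step gives $r \ge 2^{\ep\phi} \cdot 2^{-C} = 2^{\ep'\phi}$, contradicting the polynomial upper bound in~\eqref{eq:num_pieces}.

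The chief obstacle is the second step: packing an exponentially large, 3-AP-free set into a PIP of \emph{fixed} dimensions $m,n$. The tension is that the most natural Behrend-type constructions have descriptions whose arithmetic complexity grows with the size of the set, while fixed-dimensional PIPs can only encode Presburger-definable sets of bounded complexity. The lifting tricks from Section~\ref{sec:system_proofs} show how to pack a polynomial-size disjunction into bounded dimension, but maintaining simultaneously the exponential size of $Q'$, its 3-AP-freeness, and the fixed bound on $m,n$ requires careful parameter tuning and is the delicate technical core of the argument.
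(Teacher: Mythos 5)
Your step 1 is sound: localizing the midpoint argument of Corollary~\ref{cor:mid_point} to a single piece $P_i$ does work (the test set on $P_i$ has constant size $n^{4n}$, infeasibility on $P_i$ becomes a short formula $\exists\tilde\t\,\Phi_i(y,\tilde\t)$ of bounded size via Remark~\ref{rem:floor}, the cell of the hyperplane arrangement and the copolyhedron $P_i$ are both convex, so the mod-$2$ midpoint stays in $Q'\cap P_i$), and it correctly yields $r\ge |Q'|\cdot 2^{-C}$ whenever $Q'$ is free of three-term progressions. But this shifts the entire burden onto step 2, which is exactly the step you have not supplied, and the sketch you give for it breaks in two concrete places. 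First, the complement of a base-$b$ digit-restricted set is \emph{not} a union of $O(d)$ arithmetic progressions: the condition ``digit $j$ equals a forbidden value $c$'' on $[0,b^d)$ is a union of $b^{d-j-1}$ blocks of length $b^j$, which requires $\min(b^j,\,b^{d-j-1})$ progressions of the form $\AP(g,h,e)$ --- exponentially many for the middle digits --- so the AP-COVER instance is not of polynomial size. Second, and more fundamentally, the AP-COVER machinery of Sections~\ref{sec:main_proof}--\ref{sec:system_proofs} realizes ``$z\notin\De$'' as a $\for\y\,\ex\x$ condition, i.e.\ one quantifier alternation deeper than what KPT and Corollary~\ref{cor:mid_point} apply to; the set $Q'$ there must be the infeasibility set of a single system $A\x\le f(y)$, not of a $\for\ex$ sentence. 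The only such infeasibility sets constructed in the paper are the convergent sets $\mathcal{C}'\cap\zz^2$, which are either of size $O(\phi)$ (Fibonacci) or contain long arithmetic progressions (large partial quotients $h_i$, since consecutive lattice points on $\seg{C_{2i-1}C_{2i}}$ differ by the fixed vector $\ovv{OD_{2i-1}}$). Whether an exponentially large, $3$-AP-free set can be realized at all as the infeasibility set of a fixed-dimension PIP with coefficients of bit length $O(\phi)$ is left entirely open by your proposal, and it is not obviously true.

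The paper avoids this obstacle by extracting the exponential factor from a different source. It never needs $|Q'|$ to be large: the Fibonacci instance gives a $Q'$ of merely constant size exceeding $d(m,n,k)$ whose points are in strictly convex position, hence midpoint-free. The exponential lower bound on $r$ comes instead from the blow-up in Lemma~\ref{lem:one_segment}: the parameter interval $I=[0,N)$ is replaced by $J=[0,MN)$ containing $M$ disjoint translated copies of the original problem, where $\log M = O(\phi)$; if $r\ll M$, some copy lies inside a single piece, which yields a single constant-size test set valid on all of $I$, and this (after the flattening of Lemma~\ref{lem:one_piece} and Corollary~\ref{cor:mid_point}) contradicts the convex position of the Fibonacci convergents. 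Hence $r\ge M\ge 2^{\ep\phi}$. If you want to salvage your route, you would either need to prove the existence of the exponentially large $3$-AP-free infeasibility set --- a substantial open construction --- or incorporate some version of the paper's copy-and-translate argument, at which point the large $3$-AP-free set becomes unnecessary.
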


\begin{proof}
Assume~\eqref{eq:num_pieces} holds.
Consider the following continued fraction of length $(2\vk+1)$:
\begin{equation*}
\al_{\vk} \; = \; [2;\; 1,\, \dots,\, 1] \; = \; p/q\ts,
\end{equation*}
where $p = F_{2\vk+3},\, q = F_{2\vk+1}$ are the Fibonacci numbers.
From Properties (G1)--(G6) in Section~\ref{sec:frac}, we see that the
lower convex curve $\mathcal{C}$ for $\al$ connects $\vk+2$ integer points:
\begin{equation*}
C_{0} = (0,1),\; C_{1} = (2,1),\; C_{2} = (5,2) ,\;  \dots ,\; C_{\vk+1} = (p,q).\footnote{Recall that the vertical coordinate is put in the first position.}
\end{equation*}
Here $C_i=(F_{2i+1},F_{2i-1})$ for $1 \le i \le s+1$.
Let $\mathcal{C'}$ be the convex curve connecting $C_{1},\dots,C_{\vk+1}$ (see Figure~\ref{f:continued_fraction}).
Property (G2), for every $1 \le i \le \vk$, the segment $C_{i}C_{i+1}$ has exactly $2$ integer points,
$C_{i}$ and $C_{i+1}$.
In other words, we have $\mathcal{C'} \cap \zz^{2} = \{C_{1},\dots,C_{\vk+1}\}$.

Let $Q$ be the triangle defined in~\eqref{eq:Q_def}.
By Lemma~\ref{lem:easy}, an integer point $\y = (y_{2},y_{1}) \in Q$ lies on
$\mathcal{C'}$ if and only if $P_{\y}$ is integer point free, where $P_{\y}$
was defined in~\eqref{eq:P_y_def}.\footnote{We take the first term in~$\al$ to be~$2$
because of Remark~\ref{rem:exception}}
In other words, we have:
\begin{equation*}
\aligned
Q' \; &= \; \left\{ \y \in Q \cap \zz^{2} \; : \;
\left\{
\begin{matrix}
p y_{1}  -  q y_{2} &\ge &p x_{1} - q x_{2} &\ge &0\\
y_{2} - 1 &\ge &x_{2} &\ge &1\\
\end{matrix}
\right\}
\; \text{has no solutions} \; (x_{2},x_{1}) \in \zz^{2}
\right\}
\\
&= \; \mathcal{C'} \cap \zz^{2}\ts.
\endaligned
\end{equation*}

The above is a PIP problem with parameters $\y \in Q$ and variables $\x = (x_{2},x_{1}) \in \zz^{2}$.
Note that the system has fixed length $m=4$.
By Corollary~\ref{cor:mid_point}, there exists a constant~$d$, so that if
$|\mathcal{C'} \cap \zz^{2}| = \vk+1 > d$ then there are $3$ distinct points
$\y_{1},\y_{2},\y_{3} \in \mathcal{C'} \cap \zz^{2}$ with $\y_{3} = (\y_{1} + \y_{2})/2$.
However, by the previous paragraph, the only integer points on $\mathcal{C'}$
are $C_{1},\dots,C_{\vk+1}$, which are in convex position, see Property~(G4).
Thus, none among them can be the midpoint of two others.
We get a contradiction.
Therefore,~\eqref{eq:num_pieces} cannot hold in general.

Recall the PIP problem~\eqref{eq:add_var} with a $1$-dimensional parameter $y'$, i.e., $k=1$.
From~\eqref{eq:num_pieces}, we deduced $r \ll M$ in~\eqref{eq:r_M}.
This led to the observation that at least one interval $I'$ must lie in a single piece $J_{i}$.
The chain of deductions continued from there through Lemma~\ref{lem:one_piece} and Corollary~\ref{cor:mid_point} and led to the above contradiction.
Therefore, we must have $r > M$, which implies $r \ge 2^{\ep\phi}$ for some constant $\ep=\ep(m,n)>0$.
\end{proof}

\subsection{Implications}\label{ss:kannan-ES}
To summarize, Theorem~\ref{t:KPT_wrong} shows that a polynomial size
decomposition into polyhedral pieces as in~\eqref{eq:KPT_partition} does not exist.
If one is willing to sacrifice the polyhedral structure of the pieces,
then a polynomial size partition similar to~\eqref{eq:KPT_partition} does
in fact exist~\cite{ES} (see also~\cite{E2}):

\begin{theo}[Eisenbrand and Shmonin]\label{th:ES}
Fix $n$ and $k$. Let \ts $A\x \le \b$ \ts be a $\PIP$ problem with a $k$-dimensional parameter space $W$.
Then we can find in polynomial time a partition
\begin{equation}\label{eq:ES_partition}
W \, = \, S_1 \, \sqcup \, S_2  \, \sqcup \, \dots \, \sqcup \, S_r\ts,
\end{equation}
where each $S_{i}$ is an \emph{integer projection} of another polyhedron $S'_{i} \subseteq \rr^{m+\ell}$, defined as:
\begin{equation*}
S_{i} \; = \; \big\{\ts \b \in \rr^{m} \; : \; \ex \t \in \zz^{\ell} \;\; (\b,\t) \in S'_{i} \ts\big\}.
\end{equation*}
Here $\ell = \ell(n)$ is a constant that depends only on~$n$.
All polyhedra $S'_{i}$ can be found in polynomial time. The partition~\eqref{eq:ES_partition}
satisfies all other properties as claimed in~$\KPT$.
\end{theo}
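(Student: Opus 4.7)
The strategy is to recover Kannan's test-set framework but to replace polyhedral pieces by integer projections at every place where a floor operation would otherwise destroy convexity. First, I would apply Lenstra's algorithm together with the fixed-dimensional integer-programming machinery used by Kannan to produce, in polynomial time, a family of test pairs $(S_j, T_j)$, where $T_j : \rr^m \to \rr^n$ and $S_j : \zz^n \to \zz^n$ are rational affine, the number of pairs is polynomial in the bit length $\phi$, and $K_\b \cap \zz^n \ne \varnothing$ if and only if $S_j \lfloor T_j \b \rfloor \in K_\b$ for some $j$ (with only $n^{4n}$ of them queried at any single $\b$).

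Second, I would describe the region on which a given pair works,
\[
U_j \;=\; \bigl\{\b \in W : S_j \lfloor T_j \b \rfloor \in K_\b\bigr\},
\]
as an integer projection. Introducing an auxiliary integer vector $\t \in \zz^n$ with $\t \le T_j \b < \t + \mathbf{1}$, the condition $A S_j \t \le \b$ becomes linear in $(\b, \t)$, so
\[
U_j \;=\; \bigl\{\b \in \rr^m : \exists \t \in \zz^n,\ (\b,\t) \in U'_j\bigr\}
\]
for a polyhedron $U'_j \subset \rr^{m+n}$, yielding $\ell = n$. I would then turn the cover $\{U_j\}$ into a disjoint partition by the lexicographic rule, assigning $\b$ to the smallest $j$ for which the pair works; the condition ``no earlier pair works'' is again expressible by adjoining a few auxiliary integer vectors through the same floor-encoding trick, so each piece $S_i$ remains an integer projection of a polyhedron in $\rr^{m+\ell}$ with $\ell$ depending only on $n$.

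Third, the ``infeasible'' piece $\{\b \in W : K_\b \cap \zz^n = \varnothing\}$ is encoded by the global failure of all test pairs, which after floor-encoding is again an integer projection. This is precisely the step that repairs the gap in the original proof of KPT: polyhedral pieces cannot describe infeasibility of a parametrized IP, but integer projections can.

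\textbf{The main obstacle.} The subtlest step is the disjointification and the infeasibility piece. Naively complementing an integer projection blows up the number of auxiliary integers, and in the worst case can destroy the polynomial bound on $\ell$. The fix rests on the fact that every floor term $\lfloor T_j\b \rfloor$ lives in $\zz^n$ and that only $n^{4n}$ test pairs are ever queried at one $\b$; hence the total $\ell$ stays bounded by a constant depending on $n$ alone, and the partition remains polynomial in $\phi$. Carefully verifying this bookkeeping, while simultaneously maintaining polynomial-time computability of the $S'_i$, is the heart of the Eisenbrand--Shmonin argument.
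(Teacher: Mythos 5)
There is no internal proof to compare against: the paper states Theorem~\ref{th:ES} as a quoted result of Eisenbrand and Shmonin~\cite{ES} and only discusses how it differs from KPT, so your sketch has to stand on its own. On its own terms it has two genuine gaps. First, your Step~1 assumes exactly the hard content of the theorem: a globally valid, polynomial-size, polynomial-time computable family of test pairs $(S_j,T_j)$ such that $K_{\b}\cap\zz^n\neq\varnothing$ iff some $S_j\floor{T_j\b}\in K_{\b}$. This cannot be delegated to ``Lenstra plus Kannan's machinery'': the natural source of such a family is Kannan's polyhedral partition, and the very point of this paper (Theorem~\ref{th:KPT-exp}) is that the polynomial-size version of that partition does not exist; Kannan's valid weaker partitioning result \cite[Lemma~3.1]{K2} does not directly supply a polynomial global test family either. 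In the actual Eisenbrand--Shmonin argument the partition and the per-piece test sets are built together by a recursion on dimension using flatness/lattice-width estimates, with the integer-projection structure of the pieces emerging from the floor operations in that recursion --- not by first producing a flat global list and then carving it up.

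Second, your disjointification does not deliver $\ell=\ell(n)$. The set where pair $j$ fails at $\b$ is $\ex\t_j\,(\t_j=\floor{T_j\b}\ \land\ A S_j\t_j\not\le\b)$, and the clause $A S_j\t_j\not\le\b$ is a \emph{disjunction} over the $m$ rows, where $m$ is part of the input (KPT allows long systems). Hence ``no earlier pair works'' for the $i$-th piece is a conjunction of up to $i-1$ such disjunctions; writing each piece as an integer projection of a \emph{single} polyhedron forces you either to split into roughly $m^{\ts i-1}$ subpieces or to adjoin auxiliary integer variables whose number grows with $i$ (which can be as large as $\operatorname{poly}(\phi)$) and with $m$ --- in both cases violating the requirements that $r$ be polynomial with $\ell$ depending on $n$ alone. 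Your proposed rescue, that ``only $n^{4n}$ test pairs are ever queried at one $\b$,'' is circular: that is a per-piece bound belonging to the conclusion you are trying to establish, not a property of the unpartitioned global list. (A smaller point: the gap in KPT identified in $\S$\ref{ss:finrem-KPT-gap} is not that polyhedral pieces cannot describe infeasibility at all --- exponentially many of them can --- but that the forced floor value $\floor{\al+1-\be}$ depends on $\b$, so polynomially many polyhedral pieces do not suffice; this is what the integer-projection pieces of \cite{ES} fix.)
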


Note that the integer projection of a polyhedron defined in the theorem
is not necessarily a polyhedron as the following example shows.

\begin{eg}
Consider the polytope $S' = \big\{\ts (y_{2},y_{1}) \in \rr^{2} \; : \; 0 \le y_{2} \le 1,\; 0 \le y_{1} - 3y_{2} \le 2 \ts\big\}$.
The integer projection of $S'$ on the coordinate $y_{1}$ is $S = [0,2] \cup [3,4]$ (see Figure~\ref{f:int_proj}).
\end{eg}

\begin{figure}[hbt]
\begin{center}
\psfrag{y1}{$y_1$}
\psfrag{y2}{$y_2$}
\psfrag{O}{$O$}
\psfrag{1}{$1$}
\psfrag{2}{$2$}
\psfrag{3}{$3$}
\psfrag{4}{$4$}
\psfrag{S}{$S'$}

\epsfig{file=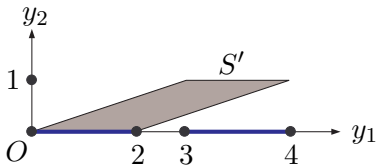, width=5.5cm}
\end{center}

\vspace{-1em}

\caption{A polytope $S'$ (shaded) and is integer projection (bold).}
\label{f:int_proj}
\end{figure}

We emphasize that the proofs of Theorem~\ref{th:Kannan} and Corollary~\ref{cor:PR2}
still hold if KPT is substituted by Theorem~\ref{th:ES} (see~\cite{ES}).
Overall, the only discrepancy between KPT and Theorem~\ref{th:ES}
is about the structures of the pieces in the partition.  This does not
at all affect all known results about decision with $2$ quantifiers or less.
Worth mentioning is the polynomial time algorithm by Barvinok and Woods~\cite{BW}
on counting integer points in the integer projection of a polytope.
This algorithm uses a weaker (valid) partitioning procedure also due
to Kannan~\cite[Lemma~3.1]{K2}.
However, as we pointed out in Section~\ref{sec:KPT_intro}, for $3$ quantifiers or more,
this structural discrepancy between KPT and Theorem~\ref{th:ES}
is of crucial importance.

\bigskip

\section{Final remarks and open problems}\label{sec:final}

\subsection{}\label{ss:finrem-KPT}
Niels Bohr, the inventor of quantum theory, is quoted saying:

\smallskip

\begin{itemize}
\item[] ``It is the hallmark of any deep truth that its negation is also a deep truth.''
\end{itemize}

\smallskip

\nin
This roughly reflects our attitude towards KPT.  A pioneer result at the time,
it only slightly overstated the truth compared to the Eisenbrand--Shmonin theorem
(Theorem~\ref{th:ES}).   In fact, for many applications, including Kannan's
Theorem~\ref{th:Kannan} and Barvinok--Woods algorithm \cite{BW},
Kannan's weaker result in~\cite{K2} is sufficient.

Let us emphasize that, of course, it would be natural to have a partition into
convex (co-)polyhedra rather than general semilinear sets, since convex
polyhedra are much easier to work with.  The fact that it took nearly 30 years
until KPT was disproved, shows both the delicacy and the technical
difficulty of the issue.

\medskip

\subsection{}\label{ss:finrem-KPT-gap}
The gap in the proof of KPT (Theorem~3.1 in~\cite{K1}) could be traced to the following lines:

\smallskip

\begin{itemize}
\item[]``\ldots for each $\ts (b,x) \in S_{i}$ \ts (with $b \in P$, $x \in \zz^{n}$), there is a unique $\ts y \in \zz^{\ell}$ \ts  so that $(b,x,y)$ belongs to $S_{i}'$. In fact, each component of $y$ is of the form $F'\floor{Fx}$, where $F',F$ are affine transformations. This is easily proved by induction on $n$, noting that (4.5) of [8], the $z$ is in fact forced to be $\floor{\al+1-\be}$.''
\end{itemize}

\smallskip

\nin
Here [8] refers to the conference proceedings version of paper~\cite{K2}.
In equation (4.5) of~\cite{K2}, variable~$z$ is in fact forced to be $\floor{\al+1-\be}$.
However, the quantity $\al$ in (4.5) actually depends on $b$, which makes $\floor{\al+1-\be}$ a function of $b$ instead of a constant.
This implies that $y$ in the above quoted paragraph could also depend on $b$.
This technical error was perhaps due to the unclear notation $\al$, which does not reflect its dependence on $b$, or due to the complicated cross referencing between~\cite{K1} and~\cite{K2}.

\medskip

\subsection{}
There is a delicate difference between the treatment of $\PIPP$ in Section~\ref{ss:kannan-KPT} versus that in the integer programming literature (see e.g.~\cite{CL,V+,VW}). In the latter, the
parameter space~$W$ is also partitioned into convex polyhedra $P_{i}$,
and over each $P_{i}$ the number of solutions $\x$ is given by a
quasi-polynomial $p_{i}(\b)$ in $\b$.  However,
since there are no test sets, this does not allow us to solve $\PIPP$ for \emph{all} $\b$.
In other words, even though a quasi-polynomial $p_{i}({\b})$ is obtained, which evaluates to $|K_{\b} \cap \zz^{n}|$, there is no easy way to test whether $p_{i}(\b) \neq 0$ for all $\b$ within $P_{i}$.
In general, we prove in~\cite{shortGF} that there are strong
obstacles in using (short) generating functions to decide feasibility of
Presburger sentences.

\medskip

\subsection{}
Now that we have Theorem~\ref{th:PR3hard}, one can ask if the dimension~$5$
is tight.
Observe that for three variables and three quantifiers, there is essentially a unique form of short Presburger
sentence:
$$
\exists \ts z  \;\; \forall \ts y \;\; \exists \ts x \, : \, \Phi(x,y,z).
$$
Despite Theorem~\ref{th:KPT-exp}, KPT actually holds for a PIP problem
$\ts a\ts x  \le f(y,z)\ts$ with a single variable~$x$, i.e., when $n=1$.
Therefore, this sentence can be decided by the approach in~\cite{shortPR}.
The only remaining special case of $\SPAt$ is
$$
\exists \ts z  \;\; \forall \ts y \;\; \exists \ts \x \, : \, \Phi(\x,y,z), \ \. \text{
where} \ \, \x \in \zz^2.
$$
It would be interesting to see if this case is also $\NP$-complete.

Similarly, for sentences $\GIP$, one can ask if dimension 6 in
Theorem~\ref{cor:system2} can be lowered.  We believe it can be,
at least for the counting part (cf.~\cite{KannanNPC}).

\medskip

\subsection{}
Motivated in part by the \emph{Hilbert's tenth problem},
Manders and Adleman~\cite{MA} (see also \cite[$\S$A7.2]{GJ})
proved the following classical result: \ts
feasibility over $\nn$ of
$$
a\ts x^2 \. + \. b \ts y  \.= \. c
$$
is $\NP$-complete, given $a,b,c\in \zz$.  One can view our
Theorem~\ref{cor:system1} as a related result, where a single
quadratic equation and two linear inequalities $x,y \ge 0$
(over~$\zz$) are replaced with a system of 24 linear inequalities.

\medskip

\subsection{}\label{ss:finrem-minmax}
Minimizing polynomial functions over integer points in a convex
polytope is an interesting problem of Integer Programming.  Already
for polynomials of degree~4 in two variables this is known to be
$\NP$-hard~\cite{DHKW}, but for lower degree polynomials some such
problems can be solved in polynomial time~\cite{DHWZ}.
The survey paper~\cite{Kop} contains extensive background on various related problems.
Curiously, the following natural problem remains open:

\begin{question}
Let $n$ be fixed. Given a polytope $P \subset \rr^{n}$ and a rational
quadratic function $f:\rr^{n} \to \rr$, can the optimization problem
$\min_{\x \in P \cap \zz^{n}} f(\x)$ be solved in polynomial time?
\end{question}

The case $n=2$ was resolved positively in~\cite{DeW}.
Note that the case $n=3$ with $f$ homogeneous is known to have an FPTAS~\cite{HWZ}.

\medskip

\subsection{}\label{ss:finrem-Pareto}
Our Theorem~\ref{th:Pareto} strongly contrasts with the positive results in~\cite{DHK},
which require that all $f_{i}$'s are linear.  There, it is proved that optimizing
over the Pareto minima can be done in polynomial time when $g$ is linear.
Furthermore, if $g$ is non-linear then an FPTAS also exists.
Here, we say that having even one $f_{i}$ quadratic is enough
to make the problem hard.

Note that in Theorem~\ref{th:Pareto} we use three polynomial functions,
two or which are linear.  It would be interesting to see if just two polynomial
functions suffice for the hardness.

%


\vskip.82cm


\subsection*{Acknowledgements}
We are greatly indebted to Sasha Barvinok for many fruitful discussions
and encouragement.  We are also grateful to Iskander Aliev,
Matthias Aschenbrenner, Art\"{e}m Chernikov,
Fritz Eisenbrand, Lenny Fukshansky,  Robert Hildebrand, Ravi Kannan,
Oleg Karpenkov, Matthias K\"oppe, Rafi Ostrovsky
and Kevin Woods for interesting conversations and helpful remarks.
Special thanks to Jes\'{u}s De Loera for suggesting hardness of
Pareto optima as a possible application of our main results.
This work was finished while both authors were in residence
of the MSRI long term Combinatorics program in the Fall of 2017;
we thank MSRI for the hospitality.  The first author
was partially supported by the UCLA Dissertation Year Fellowship.
The second author was partially supported by the~NSF.

\newpage

{\footnotesize

}
\end{document}